\newtheorem{thm}{Theorem}[section]
\newtheorem{prop}[thm]{Proposition}
\newtheorem{lem}[thm]{Lemma}
\newtheorem{cor}[thm]{Corollary}
\newtheorem{rem}[thm]{Remark}
\newtheorem{defi}[thm]{Definition}
\newtheorem{exa}[thm]{Example}
\newcommand{\sH}{\mathcal{H}}
\newcommand{\sK}{\mathcal{K}}
\newcommand{\sF}{\mathcal{F}}
\newcommand{\sU}{\mathcal{U}}
\newcommand{\sG}{\mathcal{G}}
\newcommand{\sL}{\mathcal{L}}
\newcommand{\sD}{\mathcal{D}}
\newcommand{\sX}{\mathcal{X}}
\newcommand{\sY}{\mathcal{Y}}
\newcommand{\sS}{\mathcal{S}}
\def\a{\alpha}
\def\b{\beta}
\def\d{\delta}
\def\de{\Delta}
\def\ga{\Gamma}
\def\l{\lambda}
\def\si{\Sigma}
\def\t{\tau}
\def\va{\varphi}
\def\th{\theta}
\def\z{\zeta}
\def\ts{\times}
\def\iy{\infty}
\def\im{{\rm Im\, }}
\def\kr{{\rm Ker\, }}
\def\linhull{{\rm span}}
\def\lg{\langle}
\def\rg{\rangle}
\def\wh{\widehat}
\def\wt{\widetilde}
\def\BC{{\mathbb C}}
\def\BD{{\mathbb D}}
\def\BT{{\mathbb T}}
\renewcommand{\theequation}{\arabic{section}.\arabic{equation}}
\newcommand{\ands}{\quad\mbox{and}\quad}
\newcommand{\epr}{{\hfill $\Box$}}
\newcommand{\nn}{{\nonumber}}
\newcommand{\bu}{{\bullet}}
\begin{document}

\title[Generalized solutions of Riccati equalities and inequalities]
    {Generalized solutions of Riccati equalities and inequalities}

\author{D.~Z.~Arov}
\address{Division of Applied Mathematics and Informatics, Institute of Physics and Mathematics,
South-Ukrainian Pedagogical University,  Odessa,  65020, Ukraine}
\email{arov\_damir@mail.ru}

\author{M.~A.~Kaashoek}
\address{Department of Mathematics, Vrije Universiteit, Amsterdam, The Netherlands}
\email{m.a.kaashoek@vu.nl}

\author{D.~R.~Pik}
\address{Faculty of Social and Behavioural Sciences, University of Amsterdam, Amsterdam, The Netherlands}
\email{drpik2@uva.nl}

\subjclass[2000]{Primary 47A48, 47A62; Secondary 47A56, 93C55}
\date{07/02/2016; \ \  Revised 03/03/2016}

\dedicatory{Dedicated to Yurii Makarovich Berezanskii on the
occasion of his 90th birthday}

\keywords{Discrete time-invariant systems, scattering supply rate,
passive systems, Riccati equality, Riccati inequality,
Kalman--Yakubovich--Popov inequality}

\begin{abstract}The Riccati inequality and equality are studied for infinite dimensional linear discrete
time stationary systems with respect to the scattering supply
rate. The results  obtained  are an addition to and  based on our
earlier work on the  Kalman--Yakubovich--Popov inequality in
\cite{AKP06}. The main theorems are closely related  to the
results of Yu.~M.~Arlinski\u{\i} in \cite{Arl08}. The main
difference is that we do not assume the original system to be a
passive scattering system, and we allow the solutions of  the
Riccati inequality and equality  to  satisfy weaker  conditions.
\end{abstract}

\maketitle

\setcounter{equation}{0}
\section{Introduction and main theorems}\label{sec:thms}

This paper is an addition to  \cite{AKP06}. Throughout
$\si = (A, B, C, D; \sX, \sU, \sY)$  is  a shorthand notation for the
linear discrete time-invariant  system
\begin{equation}
\label{syst}
\Sigma
\left\{
\begin{array}{rcl}
x_{n+1} &=& A x_n + B u_n
\\
\noalign{\vskip4pt}
y_n &=& C x_n + D u_n
\end{array}
\right.
\quad  (n=0, 1,2 , \ldots).
\end{equation}
Here
$A:\sX \rightarrow \sX$, $B:\sU \rightarrow \sX$, $C:\sX \rightarrow \sY$ and $D: \sU \rightarrow \sY$ are bounded linear operators acting between separable Hilbert spaces. The operator $A$ is called   the \emph{state operator},   $B$ and $C$   are referred to as     \emph{input operator} and \emph{output operator}, respectively, and   $D$  is called the   \emph{feed through operator}. The spaces $\sX$, $\sU$, and $\sY$ are called \emph{state space}, \emph{input space}, and \emph{output space}, respectively. By definition  the \emph{transfer function} of the system $\si$ is the operator-valued function
 \[
 \th_\si(\l) = D + \l C (I - \l A)^{-1} B.
 \]
Note that $\th_\si$ is an $\sL(\sU, \sY)$-valued function  which is defined and analytic  on  the open set consisting of all $\l \in \BC$ such that $I - \l A$ is boundedly invertible. In particular,  $\th_\si$ is  analytic in  an open neighborhood of zero.

With the system $\si = (A, B, C, D; \sX, \sU, \sY)$ we associate the  linear manifolds  $\im (A|B)$ and $\kr(C|A)$ which are defined as follows
\begin{equation}\label{contrspaces}
\im (A|B)=\linhull\, \{\im A^n B \mid {n\geq 0}\}, \quad
\kr(C|A) = \bigcap_{n \geq 0} \kr C A^n.
\end{equation}
Recall that  $\si$ is  \emph{minimal}  if  $\im(A|B)$ is dense in $\sX$  (i.e., $\si$ is \emph{controllable})   and  $\kr(C|A) =\{0\}$   ( i.e., $\si$ is \emph{observable}); cf.,  Theorem 2.1 in \cite{AKP06}. Finally, we denote by $M(\si)$ the \emph{system matrix associated with} $\si$, that is, $M(\si)$ is the $2\ts 2$ operator matrix defined by
\begin{equation}\label{systmat}
M(\si):=\begin{bmatrix}  A&B    \\ C &  D \end{bmatrix}:
\begin{bmatrix}  \sX    \\  \sU \end{bmatrix}\to
\begin{bmatrix}  \sX    \\  \sY \end{bmatrix}.
\end{equation}

In this paper we are interested in systems   that are \emph{passive} (or, in an other  terminology, \emph{dissipative})  with respect to the scattering supply rate function $w(u,y)=\|u \|^2-\|y \|^2$. The latter means  that   for each initial condition $x_0$ and each input sequence $u_0, u_1, u_2, \ldots$ we have
\[
\|x_{n+1}\|^2- \|x_{n}\|^2\leq \|u_{n}\|^2-\|y_{n}\|^2, \quad n=0,1,2, \dots,
\]
where $x_{n+1}$ and $y_n$ are determined from $u_n$ and $x_n$ via the system equations in \eqref{syst} In that case the   associate  system matrix is a contractive operator from $\sX\oplus\sU$ into  $\sX\oplus\sY$. The converse is also true. In other words, the system  $\si$ is passive if and only if   the operator $M_\si$ is a contraction. Moreover, in that case its transfer function $\th_\si$ is a Schur class function.

Our  main theorems given below concern   the Riccati equality and  Riccati  inequality   for discrete time systems with a scattering supply rate.   Analogous  results may be obtained for  other supply rates, e.g., impedance and transmission supply rates, and for continuous time systems. For these different supply rate functions see, e.g., the papers \cite{ArovNu96}, \cite{ArovNu00} and the references therein.

\begin{defi}\label{SolRE} Let $\si = (A, B, C, D; \sX, \sU, \sY)$. A $($possibly unbounded$)$
selfadjoint ope\-rator $H$ in $\sX$ is said to be a
\emph{generalized  solution of the Riccati equation associated to
$\si$} if the following four conditions are satisfied:
\begin{itemize}
\item [\textup{(C1)}] the operator $H$ is positive  as a selfadjoint operator, i.e, $\lg Hx,x\rg>0$ for  each
$0\not = x\in \sD(H)$;
\item [\textup{(C2)}] $A\sD(H^{1/2})\subset \sD(H^{1/2})$ and $B\sU \subset
\sD(H^{1/2})$;
\item [\textup{(C3)}]  the operator $\d_\si(H)= I_\sU-D^*D-(H^{1/2}B)^*H^{1/2}B$ is bounded and  nonnegative, and
\begin{equation} \label{incluC3}
\Big(D^*C+(H^{1/2}B)^*H^{1/2}A\Big)\sD(H^{1/2})\subset \d_\si(H)^{1/2}\sU;
\end{equation}
\item [\textup{(C4)}]  for each $x\in \sD(H^{1/2})$ we have

\begin{equation}\label{ric}
\begin{aligned}
\|H^{1/2}x\|^2 & - \|H^{1/2}Ax\|^2 - \|Cx\|^2
\\
& =\|\Big(\d_\si(H)^{1/2}\Big){}^{[-1]}\Big(D^*C+
(H^{1/2}B)^*H^{1/2}A\Big)x\|^2.
\end{aligned}
\end{equation}

\end{itemize}
\end{defi}
Here and in the sequel  $\sD(H)$ stands for the \emph{domain} of the operator $H$.   Since $H$ is a positive selfadjoint operator, we know from the theory of  operators (possibly unbounded)  on Hilbert spaces (see, e.g., Chapter XII in \cite{DSII63}) that  $H^{1/2}$ is well-defined and  a positive selfadjoint operator too.  Moreover,
\begin{align*}
\sD(H)&=\{x\in \sD(H^{1/2}) \mid H^{1/2}x\in  \sD(H^{1/2})\},\\
Hx&=H^{1/2}\big(H^{1/2}x\big)\quad \big(x\in \sD(H)\big).
\end {align*}
The latter two properties define  $H^{1/2}$ uniquely.

Note that (C1) and (C2) imply that the operator $H^{1/2}B$ is a bounded operator from $\sU$ into $\sX$, and the hence  the operator $\d_\si(H)$  defined in (C3) is automatically bounded.

The symbol $[-1]$ appearing in  the right hand side of the inequality \eqref{ricineq} means that the term involved is  the Moore-Penrose pseudo-inverse of   the nonnegative bounded operator $ \d_\si(H)^{1/2}$. See the final paragraph of the present section for the definition of this notion.    Note that $ \d_\si(H)$ can be a zero operator (see Theorem \ref{innerdeltaH}).

In what follows we refer to \eqref{ric} as the \emph{Riccati equality}  associated to $\si$. By $\textup{RE}_\si$ we shall denote the set of all   generalized  solutions $H$ of the Riccati equation associated to $\si$.  If $H\in \textup{RE}_\si$, then
\begin{equation}
\label{obs2} \im(A|B)\subset  \sD(H^{1/2}) \ands
\im(A^*|C^*)\subset  \sD(H^{-1/2}).
\end{equation}
The first inclusion follows from condition (C2). The second inclusion in \eqref{obs2} requires a proof which will be given in  the next section; see Lemma \ref{lem:minsysH}.

By  $\textup{RE}_\si^\circ$ we denote the subset  of $\textup{RE}_\si$ consisting of all $H\in \textup{RE}_\si$ such
 that the  following two additional conditions are satisfied:
\begin{itemize}
\item [(a)] both $H^{1/2} \im(A|B)$ and  $(H^{-1/2}) \im (A^*|C^*)$ are   dense in $\sX$;
\item [(b)] the linear manifold  $ \im(A|B) $ is a core for the  operator $H^{1/2}$.
\end{itemize}
By definition (see, e.g., Section III.5.2 in \cite{Kato66})
condition (b) means that the  linear manifold $\{(u, H^{1/2}u)
\mid   u\in \im(A|B)\}$ is dense in the graph of $H^{1/2}$ with
respect to the graph norm. Note that the sets $H^{1/2} \im(A|B)$
and  $(H^{-1/2}) \im (A^*|C^*)$ are well defined because of
\eqref{obs2}. For a better understanding of condition (a) we refer
to Lemma  \ref{lem:minsysH} in Section~\ref{sec:Hpass} below. We
shall prove the following theorems.

\begin{thm}\label{thm1a}
Let $\si = (A, B, C, D; \sX, \sU, \sY)$ be a minimal system. If  the set
$\textup{RE}_\si$ is nonempty, then  the transfer function
$\th_\si$ coincides with a Schur class function in a neighborhood
of zero.
\end{thm}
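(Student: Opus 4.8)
The plan is to show that a generalized solution $H \in \textup{RE}_\si$ furnishes, after passing to the completion of $\sD(H^{1/2})$ with respect to the $H^{1/2}$-inner product, a passive realization of $\th_\si$ that has the same transfer function as $\si$ near zero; passivity of that realization then forces $\th_\si$ to agree with a Schur class function on a neighborhood of zero. Concretely, let $\sX_H$ denote the Hilbert space obtained by completing $\sD(H^{1/2})$ in the norm $\|x\|_H := \|H^{1/2}x\|$ (this is meaningful because (C1) guarantees $H^{1/2}$ is injective, so $\|\cdot\|_H$ is a genuine norm on $\sD(H^{1/2})$). By (C2), $A$ maps $\sD(H^{1/2})$ into itself and $B$ maps into $\sD(H^{1/2})$, so one obtains densely defined operators $A_H := H^{1/2}A H^{-1/2}$ on $H^{1/2}\sD(H^{1/2})$ and $B_H := H^{1/2}B$; similarly $C_H := C H^{-1/2}$ on the relevant domain, using the second inclusion in \eqref{obs2}. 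The first step is therefore to check that $A_H$, $B_H$, $C_H$ extend to bounded operators on the appropriate Hilbert spaces and that $(A_H, B_H, C_H, D; \sX_H, \sU, \sY)$ is a genuine system.

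The second step is to verify that the system matrix $M_H := \begin{bmatrix} A_H & B_H \\ C_H & D \end{bmatrix}$ is a contraction. This is exactly where conditions (C3) and (C4) come in: a short computation with the block matrix shows that $I - M_H^* M_H$, evaluated on $\begin{bmatrix} x \\ u \end{bmatrix}$ with $x$ in the dense set $\sD(H^{1/2})$, equals $\|u\|^2 + \|H^{1/2}x\|^2 - \|H^{1/2}Ax\|^2 - \|Cx\|^2 - \|Du + \text{(cross term)}\|^2$ plus the defect term $\lg \d_\si(H) u, u\rg$-type contribution; rearranging, (C4) is precisely the statement that the ``Riccati defect'' on the state side is absorbed by the Moore--Penrose term, and the inclusion \eqref{incluC3} in (C3) guarantees that the cross terms $D^*C + (H^{1/2}B)^*H^{1/2}A$ factor through $\d_\si(H)^{1/2}$ so that completing the square is legitimate. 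Assembling these, one gets $\lg (I - M_H^*M_H)\begin{bmatrix} x\\u\end{bmatrix}, \begin{bmatrix} x\\u\end{bmatrix}\rg \ge 0$ on a dense set, hence $M_H$ is contractive. By the passivity criterion recalled in the introduction, the system $(A_H, B_H, C_H, D; \sX_H, \sU, \sY)$ is then passive and its transfer function is Schur class.

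The third step is to identify transfer functions: one must show $\th_{\si}(\l) = \th_{\si_H}(\l)$ for $\l$ in a neighborhood of zero, where $\si_H$ is the new passive system. This follows from the intertwining $H^{1/2}A^n B = A_H^n B_H$ and $C A^n B = C_H A_H^n B_H$ (both valid on the dense manifolds by construction, and $C A^n B$ makes sense in $\sY$ directly), which gives equality of all the Taylor coefficients $C A^{n} B = C_H A_H^{n} B_H$ of $\th_\si$ and $\th_{\si_H}$ at zero, together with $D = D$; since both functions are analytic near zero and have the same power series there, they coincide on a neighborhood of zero. Minimality of $\si$ is what makes the identification of the Taylor coefficients clean (the controllability/observability data of $\si$ transfer to $\si_H$), though strictly for the transfer-function identity only the coefficient matching is needed.

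The main obstacle I expect is the second step: handling the unbounded operator $H^{1/2}$ carefully enough to justify that $A_H$ really does extend boundedly to $\sX_H$ and that the algebraic manipulation of $I - M_H^*M_H$ is valid despite the Moore--Penrose pseudo-inverse $(\d_\si(H)^{1/2})^{[-1]}$ being unbounded when $\d_\si(H)$ is not boundedly invertible (in particular when $\d_\si(H)$ has a trivial kernel but non-closed range, or is zero as flagged after the definition). The resolution is to work throughout on the dense domain $\sD(H^{1/2})$, use that \eqref{incluC3} places the relevant vectors inside $\ran \d_\si(H)^{1/2}$ where the pseudo-inverse behaves, and only at the end pass to the bounded closure using boundedness of $M_H$ on that dense set — so the key is to never expand $M_H^*M_H$ on vectors where the pseudo-inverse term is undefined. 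A secondary technical point, deferred to Lemma \ref{lem:minsysH} as the excerpt promises, is the second inclusion in \eqref{obs2}, namely $\im(A^*|C^*) \subset \sD(H^{-1/2})$, which is needed to make $C_H$ meaningful; I would invoke that lemma rather than reprove it.
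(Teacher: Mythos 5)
Your proposal is correct in substance, but it is assembled differently from the paper. The paper deduces Theorem \ref{thm1a} from Theorem \ref{thm2a} via the inclusion $\textup{RE}_\si\subset\textup{RI}_\si$, proves $\textup{RI}_\si=\textup{KYP}_\si$ (Theorem \ref{thm:RIKYP}), and then quotes Theorem 1.2 of \cite{AKP06} for the KYP inequality; the only piece it proves from scratch is the passivity of the associated system $\si_H$ (Part 1 of Theorem \ref{contracMRI}, which constructs the minimal contraction $\ga$ and invokes Proposition \ref{prop:A1}). Your route is self-contained: you build essentially the same system $\si_H$ (your completion $\sX_H$ of $\sD(H^{1/2})$ in the norm $\|H^{1/2}\cdot\|$ is carried unitarily by $H^{1/2}$ onto the paper's picture, where $A_H$, $C_H$ live on the dense set $\im H^{1/2}\subset\sX$), you prove contractivity of $M(\si_H)$ by completing the square --- which is legitimate exactly as you indicate: for $z\in\im H^{1/2}$ the inclusion \eqref{incluC3} lets you write $\b^*z=\d_\si(H)^{1/2}w$ with $w=\big(\d_\si(H)^{1/2}\big)^{[-1]}\b^*z$, so the quadratic form of $I-M(\si_H)^*M(\si_H)$ at $(z,u)$ equals $\lg \a z,z\rg-\|w\|^2+\|w+\d_\si(H)^{1/2}u\|^2$, which is nonnegative by (C4) (or (CI4)) and extends by density and boundedness; this is in effect the factorization underlying Proposition \ref{prop:A2} and replaces the appeal to Proposition \ref{prop:A1} --- and you finish by matching Taylor coefficients $CA^nB=C_HA_H^nB_H$, thereby avoiding both the KYP detour and the external citation. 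A side benefit of your version is that it makes visible that minimality of $\si$ is not actually needed for this implication, which the paper's formulation obscures. One small correction: the second inclusion in \eqref{obs2}, i.e.\ $\im(A^*|C^*)\subset\sD(H^{-1/2})$ from Lemma \ref{lem:minsysH}, is not what makes $C_H$ meaningful; $C_H$ is defined on $\im H^{1/2}=\sD(H^{-1/2})$ and is bounded there directly because (C4) gives $\|Cx\|\le\|H^{1/2}x\|$, the inclusion being relevant only for observability of $\si_H$ and the statements about the adjoint system.
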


\begin{thm}\label{thm1b}
Let $\si = (A, B, C, D; \sX, \sU, \sY)$ be a minimal system,  and assume that
its transfer function  coincides with a Schur class function in a neighborhood
of zero. Then the set $\textup{RE}_\si^\circ$  is nonempty and this set
 contains a minimal element with respect to the usual partial ordering of  $($possibly unbounded$)$  nonnegative selfadjoint operators.
\end{thm}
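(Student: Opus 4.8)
\textbf{Proof proposal for Theorem \ref{thm1b}.}

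The plan is to reduce the problem to the Kalman--Yakubovich--Popov inequality studied in \cite{AKP06}, produce a candidate operator as the minimal element of the associated KYP-solution set, and then verify that this candidate actually satisfies the four conditions (C1)--(C4) of Definition \ref{SolRE}, together with the extra density/core conditions (a) and (b) defining $\textup{RE}_\si^\circ$. Since $\th_\si$ coincides with a Schur class function near zero and $\si$ is minimal, the system $\si$ is a minimal realization of a Schur function germ, so by the results of \cite{AKP06} the set of (possibly unbounded) nonnegative selfadjoint solutions $H$ of the (generalized) KYP inequality associated with the scattering supply rate is nonempty and contains a minimal element $H_\circ$. The natural candidate for the minimal element of $\textup{RE}_\si^\circ$ is exactly this $H_\circ$; concretely, one expects it to be realized via the passive de Branges--Rovnyak-type functional model of the Schur germ, pulled back to $\sX$ along the controllability map $u\mapsto$ state reached by input $u$. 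The first step is therefore to recall the precise statement from \cite{AKP06} giving existence of $H_\circ$, its domain, and the identities it satisfies.

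Next I would show that $H_\circ\in\textup{RE}_\si$, i.e.\ that the KYP \emph{inequality} collapses to the Riccati \emph{equality} for this particular minimal solution. The key algebraic point is that the KYP inequality for $\si$ at $H$ is equivalent to nonnegativity of a $2\times 2$ block operator whose $(1,1)$-entry (after a Schur-complement step with respect to the $(2,2)$-entry $\d_\si(H)$) gives precisely the left-hand side of \eqref{ric} minus the squared norm on the right; hence the inequality is an equality on $\sD(H^{1/2})$ if and only if the relevant column lies in the range direction controlled by $\d_\si(H)^{1/2}$, which is the content of (C3), and the defect is zero, which is (C4). For the minimal solution $H_\circ$ this "defect zero" property should follow from the observability/minimality of the functional model realization — the minimal solution corresponds to the realization with no "extra" state, so the loss term vanishes. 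Condition (C2), $A\sD(H_\circ^{1/2})\subset\sD(H_\circ^{1/2})$ and $B\sU\subset\sD(H_\circ^{1/2})$, and condition (C1), strict positivity, should likewise be read off from the construction of $H_\circ$ in \cite{AKP06}; positivity in particular uses observability of $\si$ (cf.\ the second inclusion in \eqref{obs2}, Lemma \ref{lem:minsysH}).

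Then I would verify the membership $H_\circ\in\textup{RE}_\si^\circ$, i.e.\ conditions (a) and (b). Here controllability of $\si$ gives that $\im(A|B)$ is dense in $\sX$; combined with the fact that $H_\circ$ arises from the minimal passive realization, one gets that $H_\circ^{1/2}\im(A|B)$ is dense in $\sX$, that $\im(A|B)$ is a core for $H_\circ^{1/2}$ (this is really the assertion that the controllable states exhaust the form domain, which is where minimality of $H_\circ$ enters), and dually that $H_\circ^{-1/2}\im(A^*|C^*)$ is dense; Lemma \ref{lem:minsysH} is the tool for the dual statement. Finally I would prove minimality of $H_\circ$ in $\textup{RE}_\si^\circ$: given any $H\in\textup{RE}_\si^\circ$, it is in particular a solution of the KYP inequality, so $H_\circ\leq H$ in the sense of the partial ordering of nonnegative selfadjoint operators, by the corresponding minimality statement in \cite{AKP06}. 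The main obstacle I anticipate is the verification that the KYP inequality becomes an exact equality for the minimal solution $H_\circ$ — controlling the Schur complement and the Moore--Penrose pseudo-inverse $\bigl(\d_\si(H_\circ)^{1/2}\bigr)^{[-1]}$ on the (possibly non-closed) range, including the degenerate case $\d_\si(H_\circ)=0$, and handling all of this at the level of the form domain $\sD(H_\circ^{1/2})$ rather than $\sD(H_\circ)$, requires care; this is where the bulk of the technical work will lie.
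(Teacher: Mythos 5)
Your route is the same as the paper's: identify $\textup{RI}_\si$ with $\textup{KYP}_\si$ (Theorem \ref{thm:RIKYP}, Corollary \ref{cor:RIKYPcirc}), take the minimal element $H_\circ$ of $\textup{RI}_\si^\circ=\textup{KYP}_\si^\circ$ supplied by Theorem 5.1 of \cite{AKP06}, show that for this particular $H_\circ$ the inequality collapses to the Riccati equality, and then deduce minimality in $\textup{RE}_\si^\circ$ from the inclusion $\textup{RE}_\si^\circ\subset\textup{RI}_\si^\circ$. The reduction and the final minimality argument are fine as you state them, and conditions (a) and (b) need no separate verification, since in \cite{AKP06} the minimal element is produced inside the restricted set $\sG\sK_{\si,\textup{core}}^{\textup{min}}$, which is exactly $\textup{RI}_\si^\circ$.

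The gap is in the one step you yourself flag as the main obstacle: the proof that the defect vanishes at $H_\circ$, i.e.\ that $H_\circ\in\textup{RE}_\si$. The reason you offer --- ``observability/minimality of the functional model realization; the minimal solution corresponds to the realization with no extra state, so the loss term vanishes'' --- is not the right mechanism. Minimality of $\si_H$ cannot force the equality: by condition (a) and Lemma \ref{lem:minsysH}, \emph{every} $H\in\textup{RI}_\si^\circ$ makes $\si_H$ a minimal passive system, yet Example \ref{ex:*optimal} exhibits a maximal element $H_\bullet\in\textup{RI}_\si^\circ$ that does not belong to $\textup{RE}_\si$; indeed Theorem \ref{thm2b} claims membership in $\textup{RE}_\si^\circ$ only for the minimal element. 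What the paper actually uses is \emph{optimality}: by the final part of item (ii) in \cite[Proposition 5.8]{AKP97}, the minimal element $H_\circ$ of $\textup{KYP}_\si^\circ$ makes $\si_{H_\circ}$ a minimal and \emph{optimal} passive system; Lemma \ref{lem:minopt} (proved by unwinding the supremum defining the de Branges--Rovnyak norm in the restricted shift model of \cite{AKP97}) then shows that for a minimal optimal passive system the infimum \eqref{eq:minmatsi1} is zero; and Part 2 of Theorem \ref{contracMRI}, via the Schur-complement formula of Proposition \ref{prop:A2}, converts this vanishing into condition (C4). Your proposal supplies neither the optimality input nor the functional-model computation, so the central claim $H_\circ\in\textup{RE}_\si$ remains unproven as written; to complete the argument along your own lines you would need precisely these two ingredients.
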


 Let us recall (see \cite[page 330]{Kato66} or \cite[Section 5 ]{AKP06}) the definition of  the ordering referred to in the previous theorem.  Let $H_1$, $H_2$ be non-negative selfadjoint operators acting in a Hilbert space $\sX$. Then, by definition,  $H_1 \prec H_2$ means that
 \[
\sD(H_2^{1/2})\subset \sD(H_1^{1/2})\ands \| H_1^{1/2} x\|\leq  \| H_2^{1/2} x\|\quad \left(x\in \sD(H_2^{1/2})\right).
\]
If $H_1$ and $H_2$ are bounded, then $H_1 \prec H_2$ is equivalent to $H_1
\leq H_2$.

To prove the above two theorems it will be convenient first  to consider
 the Riccati inequality associated to $\si$. This  inequality appears
 when   the equality sign in   \eqref{ric}  is replaced by  a  ``greater  than
 equal to'' sign. In other words  condition (C4)  in Definition \ref{SolRE} is replaced
 by
\begin{itemize}
\item [\textup{(CI4)}]  \textsl{for each $x\in \sD(H^{1/2})$ we have}
\begin{equation}\label{ricineq}
 \begin{aligned}
\|H^{1/2}x\|^2 & - \|H^{1/2}Ax\|^2 - \|Cx\|^2
\\
&\geq\|\Big(\d_\si(H)^{1/2}\Big){}^{[-1]}\Big(D^*C+
(H^{1/2}B)^*H^{1/2}A\Big)x\|^2, \quad x\in\sD(H^{1/2}).
\end{aligned}
\end{equation}
\end{itemize}
We shall say  that   a selfadjoint operator $H$ acting in $\sX$ is a
\emph{generalized  solution of the Riccati inequality associated to $\si$}
when conditions (C1), (C2), (C3), and (CI4)   are satisfied.
By $\textup{RI}_\si$ we shall denote the set of all
generalized  solutions $H$ of the Riccati inequality  associated to $\si$. Furthermore,  $\textup{RI}_\si^\circ$ will
denote the subset  of $\textup{RI}_\si$ consisting of all
$H\in \textup{RI}_\si$ such the two additional conditions (a) and (b)
above are satisfied.  Clearly, the following inclusions hold:
\begin{equation}\label{inclu}
\textup{RE}_\si\subset \textup{RI}_\si, \quad
\textup{RE}_\si^\circ\subset \textup{RI}_\si^\circ.
\end{equation}
These inclusions will allow us to derive Theorems \ref{thm1a} and  \ref{thm1b}
as corollaries of the following two results.

\begin{thm}\label{thm2a}
Let $\si = (A, B, C, D; \sX, \sU, \sY)$ be a minimal system. Then the set
$\textup{RI}_\si$ is nonemepty  if and only  if the transfer
function of $\th_\si$ coincides with a Schur class function in a
neighborhood of zero.
\end{thm}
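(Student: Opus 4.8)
The plan is to prove the two implications separately, relating the (possibly unbounded) solutions of the Riccati inequality to bounded solutions of the Kalman--Yakubovich--Popov (KYP) inequality for an auxiliary system, so that the machinery of \cite{AKP06} can be invoked. For the ``only if'' direction, suppose $H\in\textup{RI}_\si$. Condition (C2) gives $\im(A|B)\subset\sD(H^{1/2})$, so by restricting attention to the dense invariant manifold $\sX_0:=\ol{\im(A|B)}$ (using minimality, in fact $\sX_0=\sX$ on the controllable side) and conjugating the system operators by $H^{1/2}$ where the latter is boundedly defined, one passes to a new system $\wt\si$ whose state operator is $\wt A=H^{1/2}A H^{-1/2}$ (suitably interpreted on a core), input operator $\wt B=H^{1/2}B$, and output operator $\wt C=C H^{-1/2}$. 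The key point is that conditions (C1)--(C3) together with (CI4) say precisely that the bounded operator $I_\sX$ (the identity on the transformed state space) satisfies the KYP \emph{inequality} for $\wt\si$ with the scattering supply rate; equivalently, after this change of variables the transformed system matrix is a contraction. I would make this rigorous by working with the graph of $H^{1/2}$ and the factorization $\d_\si(H)=I-D^*D-(H^{1/2}B)^*H^{1/2}B\geq 0$ from (C3), together with the range inclusion \eqref{incluC3}, which guarantees that the Moore--Penrose pseudo-inverse in \eqref{ricineq} is applied to vectors actually in $\ran\,\d_\si(H)^{1/2}$; the inequality (CI4) then becomes the statement that a certain $2\times 2$ operator matrix built from $\wt A,\wt B,\wt C,D$ and $\d_\si(H)^{1/2}$ is a contraction. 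Since $\th_{\wt\si}=\th_\si$ near zero (the change of variables does not alter the transfer function), and a contractive system matrix forces the transfer function to be a Schur class function in a neighborhood of zero, this direction follows.

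For the ``if'' direction, assume $\th_\si$ coincides with a Schur class function $\vt$ in a neighborhood of zero. Here I would lean directly on the results of \cite{AKP06} concerning the KYP inequality: by minimality of $\si$ and the Schur-class hypothesis, one first produces a (possibly unbounded) nonnegative selfadjoint solution of the appropriate generalized KYP inequality. Concretely, one can realize $\vt$ by a passive (contractive) system $\si_{\rm p}=(A_{\rm p},B_{\rm p},C_{\rm p},D;\sX_{\rm p},\sU,\sY)$ — e.g.\ a minimal passive realization — and then, because $\si$ and $\si_{\rm p}$ are both minimal realizations of the same transfer function, there is a (generally unbounded, closed, injective with dense range) similarity-type intertwining operator $Z$ between their controllability/observability data; cf.\ the state-space similarity theorem in the minimal-but-not-bounded setting as developed in \cite{AKP06}. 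Setting $H:=Z^*Z$ (interpreted via its closure) yields a positive selfadjoint operator, and the passivity inequality for $\si_{\rm p}$ translates, under this intertwining, into conditions (C1), (C2), (C3), and (CI4) for $H$ with respect to $\si$. The verification of (C3) — that $\d_\si(H)$ is bounded, nonnegative, and the range inclusion \eqref{incluC3} holds — is where I would need to be careful, since it is exactly the content of the defect-operator bookkeeping for the transformed input operator $H^{1/2}B=(Z B_{\rm p}$-type expression$)$; but this is precisely the kind of statement established for the KYP inequality in \cite{AKP06}, so the generalized Riccati inequality solution $H$ exists and $\textup{RI}_\si\neq\emptyset$.

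The main obstacle, in both directions, is handling the unboundedness of $H$ and $H^{1/2}$ rigorously: one must ensure all the compositions $H^{1/2}A$, $H^{1/2}B$, $(H^{1/2}B)^*H^{1/2}A$ make sense on the stated domains, that the Moore--Penrose pseudo-inverse is applied only where it is genuinely bounded on the relevant range, and — in the ``if'' direction — that the intertwining operator $Z$ between the two minimal realizations can be chosen closable with $Z^*Z$ having $\im(A|B)$ inside its form domain $\sD(H^{1/2})$. I expect the cleanest route is to factor everything through the graph Hilbert space of $H^{1/2}$ (equivalently, the completion of $\im(A|B)$ in the $\|\cdot\|_{H}$-type norm), on which the transformed system is genuinely bounded and passive, and to quote the corresponding KYP results of \cite{AKP06} verbatim there, transporting the conclusion back via \eqref{obs2}. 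The two inclusions in \eqref{inclu} then give Theorems~\ref{thm1a} and~\ref{thm1b} as the announced corollaries, once one also checks (for Theorem~\ref{thm1b}) that the minimal KYP solution produced in \cite{AKP06} actually satisfies the extra normalization conditions (a) and (b) defining $\textup{RI}_\si^\circ$ and in fact lies in $\textup{RE}_\si^\circ$.
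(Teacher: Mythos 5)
Your overall strategy is the one the paper itself follows: pass to the transformed system (your $\wt\si$ is exactly the paper's $\si_H$ built from $A_H,B_H,C_H$ in \eqref{defAH}--\eqref{defBH}), identify generalized solutions of the Riccati inequality with generalized KYP solutions, and then quote Theorem 1.2 of \cite{AKP06} for the minimal-system characterization in terms of Schur class functions. The gap is that the pivotal step --- the two-way translation between conditions (C3), (CI4) (with the Moore--Penrose pseudo-inverse) and contractivity of the system matrix $M(\si_H)$ --- is asserted rather than proved, and it is precisely the content that cannot be taken from \cite{AKP06}. In the ``only if'' direction you say that (C1)--(C3) and (CI4) ``say precisely'' that the transformed system matrix is a contraction; proving this requires showing that the defect operator $R=I-M(\si_H)^*M(\si_H)=\begin{bmatrix}\a&\b\\ \b^*&\d\end{bmatrix}$ is nonnegative, which needs the factorization criterion of Proposition \ref{prop:A1}: one must produce a contraction $\ga$ with $\kr\ga\supset\kr\a$, $\im\ga\subset\overline{\im\d}$ and $\b^*=\d^{1/2}\ga\a^{1/2}$, by first defining $\ga_0\,\a^{1/2}z=(\d^{1/2})^{[-1]}\b^*z$ on $\a^{1/2}(\im H^{1/2})$ --- well defined and contractive exactly because of (CI4) and the range inclusion \eqref{incluC3} --- and then extending by continuity using the density of $\im H^{1/2}$ and of $\a^{1/2}(\im H^{1/2})$ in $\overline{\im\a}$. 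This is the paper's Theorem \ref{contracMRI}, Part 1, and none of it appears in your write-up.

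In the ``if'' direction the same issue reappears with the roles reversed. What the machinery of \cite{AKP06} gives you (via the pseudo-similarity to a minimal passive realization and $H=Z^*Z$) is a generalized solution of the KYP inequality, i.e., conditions (C1), (C2) and \eqref{condKsi}; it does not give (C3) and \eqref{ricineq}, so your remark that the verification of (C3) ``is precisely the kind of statement established for the KYP inequality in \cite{AKP06}'' mis-attributes the needed step. To pass from contractivity of $M(\si_H)$ back to (C3) and (CI4) you again need Proposition \ref{prop:A1} applied to $I-M(\si_H)^*M(\si_H)$: the identity $\b^*=\d^{1/2}\ga\a^{1/2}$ yields both the inclusion \eqref{incluC3} (since $\im\b^*\subset\im\d^{1/2}$) and the inequality \eqref{ricineq} (since $(\d^{1/2})^{[-1]}\b^*=\ga\a^{1/2}$ with $\ga$ a contraction). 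That is the paper's Theorem \ref{thm:RIKYP}, Part 2. So your reduction is the right one, and nothing in your plan would fail, but without the operator-matrix factorization argument the proof is incomplete in both directions.
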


\begin{thm}\label{thm2b}
Let $\si = (A, B, C, D; \sX, \sU, \sY)$ be a minimal system,  and assume that
its transfer function  coincides with a Schur class function in a neighborhood
of zero. Then the set $\textup{RI}_\si^\circ$  is nonempty and this set
contains a minimal element $H_\circ$ and a maximal element $H_\bullet$
with respect to the usual ordering  of  nonnegative operators. Furthermore, the minimal element $H_\circ$ in $\textup{RI}_\si^\circ$ belongs to the set
$\textup{RE}_\si^\circ$.
\end{thm}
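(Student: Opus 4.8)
The plan is to reduce the statement to the Kalman--Yakubovich--Popov (KYP) setting of \cite{AKP06} via the substitution $H \mapsto H^{1/2}$, and then to transport the structural results known there (nonemptiness, existence of least and greatest solutions, and the characterization of the least solution as a solution of the equality) back to the language of generalized Riccati solutions. First I would make precise the bridge between generalized solutions of the Riccati inequality and generalized solutions of the KYP inequality. Given $H \in \textup{RI}_\si$, condition (C2) guarantees that on $\im(A|B)$ the operator $T = H^{1/2}$ intertwines the system data in the required way, the operator $\d_\si(H)$ is exactly the defect $I_\sU - D^*D - (TB)^*(TB)$, and (CI4) says precisely that the block operator
\begin{equation}\label{eq:kypblock}
\begin{bmatrix}
T^{[-1]}TAT^{[-1]} & T^{[-1]}TB \\ C T^{[-1]} & D
\end{bmatrix}
\end{equation}
acts as a contraction from $T\sD(T)\oplus \sU$ into $\sX\oplus\sY$, i.e.\ $H$ being in $\textup{RI}_\si$ is equivalent to $T=H^{1/2}$ being a (possibly unbounded) solution of the KYP inequality for $\si$ in the sense used in \cite{AKP06}. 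Once the dictionary between $\textup{RI}_\si^\circ$ and the analogous ``regular'' solution class of the KYP inequality is set up, nonemptiness is immediate: minimality of $\si$ together with the Schur-class hypothesis puts us exactly in the situation where \cite{AKP06} produces a regular solution of the KYP inequality, and its square gives an element of $\textup{RI}_\si^\circ$.

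Next I would establish the lattice structure. The key point is that the ordering $H_1 \prec H_2$ on nonnegative selfadjoint operators corresponds, under $H \mapsto H^{1/2}$, exactly to the ordering of the associated quadratic forms, which is the ordering under which \cite{AKP06} exhibits a least solution $T_\circ$ and a greatest solution $T_\bullet$ of the regular KYP inequality for a minimal system with Schur-class transfer function. Squaring these gives candidates $H_\circ = T_\circ^2$ and $H_\bullet = T_\bullet^2$; the conditions (a) and (b) defining $\textup{RI}_\si^\circ$ have to be checked for them, but (b) (the core condition, that $\im(A|B)$ is a core for $H^{1/2}$) and the density conditions in (a) are precisely the regularity requirements built into the extremal solutions in \cite{AKP06}, so this is essentially a translation. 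The monotonicity $H_\circ \prec H \prec H_\bullet$ for every $H \in \textup{RI}_\si^\circ$ then follows from the corresponding inequalities for $T$.

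The final and most substantive assertion is that the minimal element $H_\circ$ actually satisfies the \emph{equality} (C4), i.e.\ $H_\circ \in \textup{RE}_\si^\circ$. Here I would argue that the least solution of the KYP inequality is rigid: if the inequality \eqref{ricineq} were strict on some $x \in \sD(H_\circ^{1/2})$ (equivalently, if the block operator \eqref{eq:kypblock} associated to $T_\circ$ had a nonzero defect), one could perturb $T_\circ$ downward --- concretely, subtract from the form $x \mapsto \|T_\circ x\|^2$ the nonnegative form built from the defect, using that on the controllable subspace $\im(A|B)$ the shortage propagates forward through $A$ --- and obtain a strictly smaller form that still satisfies (C1)--(C3) and (CI4), contradicting minimality of $H_\circ$ in $\textup{RI}_\si^\circ$. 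Equivalently, in the state-space/colligation picture the least solution corresponds to the \emph{closely connected} (simple, conservative) realization attached to $\th_\si$, for which the associated operator is a contraction with trivial defect on the relevant subspace; this is exactly the "controllable part" characterization that forces equality. I expect this step --- showing that minimality in $\textup{RI}_\si^\circ$ forces the defect of the KYP block operator to vanish, carefully handled on the possibly-non-closed domain $\sD(H_\circ^{1/2})$ with the Moore--Penrose inverse $\d_\si(H_\circ)^{1/2[-1]}$ in play --- to be the main obstacle, since it requires the perturbation argument to stay inside the regular solution class and to respect the core condition (b); the density conditions in (a) are what make the forward-propagation of the defect along $\im(A|B)$ legitimate.
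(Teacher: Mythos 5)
Your first two steps (nonemptiness and the existence of extremal elements) follow the same route as the paper: identify $\textup{RI}_\si$ with the set of generalized KYP solutions of \cite{AKP06} and quote Theorem 5.1 there. Note, though, that you treat this identification as a matter of definition-chasing, whereas it is a real statement (Theorem \ref{thm:RIKYP} and Corollary \ref{cor:RIKYPcirc} in the paper): passing between the Riccati inequality \eqref{ricineq}, with its Moore--Penrose inverse and the range inclusion (C3), and contractivity of $M(\si_H)$ requires the factorization of nonnegative $2\times 2$ operator matrices through a minimal contraction (Proposition \ref{prop:A1}), in both directions; also your block operator in \eqref{eq:kypblock} is written incorrectly (the $(1,1)$ entry should be the closure of $H^{1/2}AH^{-1/2}$ on $\im H^{1/2}$, not $T^{[-1]}TAT^{[-1]}$, and the $(1,2)$ entry is simply $H^{1/2}B$).

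The genuine gap is in the final and decisive assertion, $H_\circ\in\textup{RE}_\si^\circ$. Your proposed mechanism is wrong on two counts. First, minimality of $H_\circ$ does \emph{not} force the defect of $M(\si_{H_\circ})$ to vanish: in Example \ref{ex:*optimal} the minimal solution is $H_\circ=3/64$ and $\d_\si(H_\circ)=3/4-3/64>0$. What characterizes membership in $\textup{RE}_\si$ (Theorem \ref{contracMRI}, Part 2) is the much weaker statement that the Schur complement of $I-M(\si_{H_\circ})^*M(\si_{H_\circ})$ supported on the state space is zero, i.e.\ the infimum condition \eqref{infsystM0}. Second, the least solution does not correspond to the simple conservative (closely connected) realization of $\th_\si$; it corresponds to the \emph{optimal} minimal passive realization. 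Your fallback, a downward perturbation of $T_\circ$ contradicting minimality, is only a hope: you give no construction of the perturbed operator, and it would have to remain positive and injective, satisfy (C2), the range inclusion in (C3), and the conditions (a) and (b) defining $\textup{RI}_\si^\circ$ (minimality of $H_\circ$ is asserted only within that smaller set), none of which is addressed; in the unbounded setting this is exactly where such an argument breaks down. The paper instead argues: by the results of \cite{AKP97} (Proposition 5.8(ii)) the minimal element of $\textup{KYP}_\si^\circ$ makes $\si_{H_\circ}$ a minimal \emph{optimal} passive system; then Lemma \ref{lem:minopt}, proved by an explicit computation in the de Branges--Rovnyak model space $\sH(\th)$ using the sup formula \eqref{defnormHth}, shows that every minimal optimal passive system satisfies \eqref{eq:minmatsi1}; and finally Theorem \ref{contracMRI} converts that infimum condition into the Riccati equality. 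Without an argument of this type (or a fully worked-out substitute for your perturbation step), the last sentence of the theorem remains unproved.
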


In Section \ref{sec:KYP} we shall show that the Riccati inequality
is closely related to the Kalman--Yakubovich--Popov inequality.
This allows us to prove  (see the first paragraph after Theorem
\ref{thm:RIKYP} in Section \ref{sec:KYP}) that Theorem \ref{thm2a}
is equivalent to Theorem 1.2  in  \cite{AKP06}, and that  Theorem
\ref{thm2b},   except for its  final statement,  is equivalent to
Theorem~5.1   in  \cite{AKP06}. The final statement of  Theorem
\ref{thm2b} will be  proved in Section \ref{sec:finalthm2.5}.

As we mentioned,  Theorems \ref{thm1a} and  \ref{thm1b} appear as corollaries of Theorems \ref{thm2a} and  \ref{thm2b}.  Indeed,   if  $\textup{RE}_\si$ is nonempty, then  the same holds true for  $\textup{RI}_\si$ because of the first inclusion in \eqref{inclu}.   But then Theorem \ref{thm2a} tells us that $\th_\si$ coincides   with a Schur class function in a neighborhood of zero, which proves   Theorem \ref{thm1a}.  Thus Theorem \ref{thm1a} is covered by   the ``if part'' of Theorem \ref{thm2a}. In a similar way, using the   second inclusion  in \eqref{inclu} and the final statement of    Theorem \ref{thm2b},   one sees that   Theorem \ref{thm1b} is   covered  by Theorem \ref{thm2b}.

The paper consists of seven sections including the present introduction and an appendix.  In Section \ref{sec:Hpass}   the set $\textup{RI}_\si$ is related to the set of  $H$-passive systems. Furthermore, given $H\in \textup{RI}_\si$ we give a necessary and sufficient condition on $H$ in order that $H\in \textup{RE}_\si$.  In Section \ref{sec:KYP} we make explicit the relation between the Riccati inequality   and the Kalman-Yakubovic-Popov inequality which allows us to show  that Theorem~\ref{thm2a} is equivalent to Theorem~1.2. in \cite{AKP06} and Theorem \ref{thm2b} (except for the final statement)  is equivalent to Theorem 5.1 in \cite{AKP06}.  The final statement in Theorem \ref{thm2b} is proved in Section \ref{sec:finalthm2.5}. In Section \ref{sec:uniquesol}, using the last part of  Theorem 7.1 in \cite{AKP06}, we present a necessary and sufficient condition  for $\textup{RI}_\si^\circ$ to consist  of  a single element only, and we specify this  result   for the case when $\th$ is an inner or a co-inner function.  Examples illustrating the general theory are given in Section \ref{sec:examples}. In the Appendix we  review a number of results regarding $2\ts 2$ nonnegative operator matrices and related Schur complements  that are used in the present paper.

 \medskip

\noindent\textbf{Moore--Penrose pseudo-inverse.}  Let $A$ be a
bounded selfadjoint operator on a Hilbert space $\sX$. Put
$\sX_1=\overline{A\sX}$ and $\sX_2=\sX\ominus \sX_1$. Since $A$ is
selfadjoint, $\sX_2$ is the  null space of $A$. It follows that
relative to the Hilbert space  orthogonal direct sum
$\sX=\sX_1\oplus \sX_2$ the operator $A$ has the following $2 \ts
2$ operator matrix representation:
\begin{equation}\label{matA}
A=\begin{bmatrix}A_1&0\\ 0&0  \end{bmatrix}: \begin{bmatrix}\sX_1\\\sX_2  \end{bmatrix}\to \begin{bmatrix}\sX_1\\ \sX_2  \end{bmatrix}.
\end{equation}
The fact  that $\sX_2$  is the null space of $A$, implies that the operator $A_1$ maps $\sX_1$ in  one-to-one way into itself and $A_1\sX_1$ is equal to the range of $A$ which is dense in $\sX_1$.  By $A^{[-1]}$ we denote the closed linear  operator given by
\[
A^{[-1]}=\begin{bmatrix}A_1^{-1} &0\\ 0&0  \end{bmatrix}:
 \begin{bmatrix}\im A_1\\ \sX_2  \end{bmatrix}\to \begin{bmatrix}\sX_1\\ \sX_2  \end{bmatrix}.
\]
We call  $A^{[-1]}$ the Moore--Penrose pseudo-inverse of  $A$. Its
domain $\sD(A^{[-1]})$ is the \break linear space $\im A_1\oplus
\sX_2$. Note that $A^{[-1]}$ is  a selfadjoint operator, possibly
unbounded. Furthermore,  $A^{[-1]}$ is a zero  operator if and
only  if $ A$ is a zero operator.

Now assume that $A$ is a bounded selfadjoint operator on $\sX$  which is nonnegative. Then $A^{[-1]}$ is nonnegative too, and  the square roots $A^{1/2}$  and $(A^{[-1]})^{1/2}$ are well-defined. Note that the spaces $\overline{A\sX}$  and $\overline{A^{1/2}\sX}$ coincide. Using the latter and the operator matrix representation \eqref{matA}, it is not difficult to show  that
\begin{equation}\label{sqrts}
(A^{1/2})^{[-1]}=(A^{[-1]})^{1/2}.
\end{equation}
In particular, these two operators have the same domain.

\setcounter{equation}{0}
\section{The set $\textup{RI}_\si$  and related $H$-passive  systems}\label{sec:Hpass}

Let $\si = (A, B, C, D; \sX, \sU, \sY)$  be a   linear discrete time-invariant  system, and let $H\in \textup{RI}_\si$.   Since $H$ is a positive operator, the same is true for $H^{1/2}$,  and  both $H$ and $H^{1/2}$ are one-to-one.  It follows (cf., the first paragraph of Subsection 4.1 in \cite{AKP06}) that the following   operators are well defined:
\begin{align}
&A_H: \im H^{1/2}\to \sX, \quad A_H H^{1/2}x=H^{1/2}Ax \quad \big(x\in \sD(H^{1/2})\big),\label{defAH}\\
&C_H: \im H^{1/2}\to \sY, \quad C_H H^{1/2}x=Cx \quad \big(x\in \sD(H^{1/2})\big),\label{defCH}\\
& B_H: \sU\to \sX, \quad B_H u=H^{1/2}Bu\quad (u\in \sU).\label{defBH}
\end{align}
From condition (CI4) we see that
\[
\| z\|^2- \| A_H z\|^2 - \| C_H z\|^2\geq 0 \quad (z\in \im H^{1/2}).
\]
Thus $A_H$ and $C_H$ are bounded in norm by one on $\im H^{1/2}$. Since $\im H^{1/2}$ is dense in $\sX$, we can extend  $A_H$ and $C_H$ by continuity to contractions on $\sX$ which also will  be denoted by  $A_H$ and $C_H$. From the second part of condition (C2) it follows  that  $B_H$ is well defined bounded operator, and the first  part of condition (C3) implies that $B_H$ is a contractive operator mapping  $\sU$ into $\sX$. Put
\begin{equation}
\label{defsiH}
\si_H=(A_H, B_H, C_H, D; \sX, \sU, \sY).
\end{equation}
We shall call $\si_H$ \emph{the system  associated with $\si$ and  $H$}. Recall  that the system matrix $M(\si_H)$ associated with $\si_H$ is given by
\[
M(\si_H)=\begin{bmatrix} A_H& B_H\\ C_H & D\end{bmatrix}:
\begin{bmatrix} \sX\\ \sU \end{bmatrix} \to
\begin{bmatrix} \sX\\  \sY\end{bmatrix}.
\]
\begin{defi}
In the sequel   the system  $\si$ will be called \emph{$H$-passive} when $\si_H$ is passive. In other words,   $\si$ is   $H$-passive if and only if  $M(\si_\sH)$ is contractive.
\end{defi}

\begin{thm}\label{contracMRI} Let $H\in \textup{RI}_\si$. Then the system $\si$ is $H$-passive. Furthermore,   $H\in \textup{RE}_\si$ if and only if
\begin{equation}
\label{infsystM0}
\inf \left\{\|\begin{bmatrix} x\\u\end{bmatrix}\|^2-
 \|M(\si_H)\begin{bmatrix} x\\u\end{bmatrix} \|^2 \mid  u\in \sU \right\}= 0 \quad (x\in \sX).
\end{equation}
\end{thm}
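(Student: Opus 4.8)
The plan is to analyze the quantity in \eqref{infsystM0} by expanding it explicitly in terms of the operators $A_H$, $B_H$, $C_H$, $D$, and then comparing with the defect operator $\d_\si(H)$ and the inclusion \eqref{incluC3} appearing in (C3). First I would write, for $x\in\sX$ and $u\in\sU$,
\[
\Phi(x,u):=\Big\|\begin{bmatrix} x\\u\end{bmatrix}\Big\|^2-\Big\|M(\si_H)\begin{bmatrix} x\\u\end{bmatrix}\Big\|^2
= \|x\|^2+\|u\|^2-\|A_Hx+B_Hu\|^2-\|C_Hx+Du\|^2.
\]
Since $\si$ is $H$-passive (the first assertion of the theorem, which I may invoke), $\Phi(x,u)\ge 0$ for all $x,u$. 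For fixed $x$, the map $u\mapsto \Phi(x,u)$ is a quadratic form; collecting the $u$-dependent terms one gets
\[
\Phi(x,u)=\big(\Phi(x,0)-\langle\Lambda_Hx,\d_\si(H)^{[-1]}\Lambda_Hx\rangle\big)+\big\|\d_\si(H)^{1/2}u+\d_\si(H)^{[-1/2]}\Lambda_Hx\big\|^2,
\]
where $\Lambda_H:=B_H^*A_H+D^*C_H$ (so that for $x=H^{1/2}\xi$ with $\xi\in\sD(H^{1/2})$ one has $\Lambda_Hx=(H^{1/2}B)^*H^{1/2}A\xi+D^*C\xi$, the operator appearing in (C3) and \eqref{ric}), and the decomposition uses the operator-matrix description of the Moore--Penrose inverse together with $\d_\si(H)=I_\sU-D^*D-B_H^*B_H$. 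The legitimacy of completing the square here — in particular that $\Lambda_Hx$ lies in $\sD(\d_\si(H)^{[-1/2]})=\ran\,\d_\si(H)^{1/2}$ for all $x\in\sX$ — is exactly the content of \eqref{incluC3} extended by continuity to all of $\sX$; this extension should follow from density of $H^{1/2}\sD(H^{1/2})=\ran\,H^{1/2}$ in $\sX$ and boundedness of the operators involved, and I expect verifying this density-and-closure argument carefully to be the main technical obstacle.

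Granting the completed-square identity, the infimum over $u\in\sU$ of the second (nonnegative) term is $0$, achieved in the limit by choosing $u$ so that $\d_\si(H)^{1/2}u$ approximates $-\d_\si(H)^{[-1/2]}\Lambda_Hx$ (possible precisely because $\d_\si(H)^{[-1/2]}\Lambda_Hx$ lies in $\ran\,\d_\si(H)^{1/2}$, or rather in its closure, which suffices for an infimum). Hence
\[
\inf_{u\in\sU}\Phi(x,u)=\Phi(x,0)-\big\|\d_\si(H)^{[-1/2]}\Lambda_Hx\big\|^2
=\|x\|^2-\|A_Hx\|^2-\|C_Hx\|^2-\big\|\big(\d_\si(H)^{1/2}\big)^{[-1]}\Lambda_Hx\big\|^2,
\]
using \eqref{sqrts}. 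So \eqref{infsystM0} holds for every $x\in\sX$ if and only if the right-hand side above vanishes for every $x\in\sX$.

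It remains to match this with condition (C4). By the definitions \eqref{defAH}--\eqref{defBH}, for $x=H^{1/2}\xi$ with $\xi\in\sD(H^{1/2})$ we have $\|A_Hx\|^2=\|H^{1/2}A\xi\|^2$, $\|C_Hx\|^2=\|C\xi\|^2$, $\|x\|^2=\|H^{1/2}\xi\|^2$, and $\Lambda_Hx$ equals the operator inside the pseudo-inverse in \eqref{ric} applied to $\xi$; thus the vanishing of the displayed right-hand side for all $x\in\ran\,H^{1/2}$ is literally the Riccati equality \eqref{ric} for all $\xi\in\sD(H^{1/2})$, i.e. condition (C4). Since $\ran\,H^{1/2}$ is dense in $\sX$ and the right-hand side is a continuous function of $x$ (all operators extended by continuity are bounded), its vanishing on $\ran\,H^{1/2}$ is equivalent to its vanishing on all of $\sX$. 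This gives the equivalence $H\in\textup{RE}_\si\iff\eqref{infsystM0}$, completing the proof. The one point to watch is that $\Phi(x,0)$ is itself continuous in $x$ and nonnegative (from $H$-passivity) so that the limiting/approximation arguments for the infimum and for the density transfer are valid; I would state these continuity and nonnegativity facts explicitly before invoking them.
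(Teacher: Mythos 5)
You prove only the ``furthermore'' part: the first assertion, that $\si$ is $H$-passive, is invoked rather than proved, yet it is part of the statement and, moreover, an essential ingredient of your own argument, since the nonnegativity of your $\Phi$ is what makes the quadratic form in $u$ bounded below and the completed square meaningful. In the paper this first assertion is the substantial half of the proof: from (C3) and (CI4) one defines $\ga_0(\a^{1/2}z)=(\d^{1/2})^{[-1]}\b^*z$ on $\a^{1/2}(\im H^{1/2})$, checks via \eqref{condCI4b} that it is a well-defined contraction, extends it to a contraction $\ga:\sX\to\sU$ with $\b^*=\d^{1/2}\ga\a^{1/2}$, and applies Proposition~\ref{prop:A1} to conclude $I-M(\si_H)^*M(\si_H)\geq 0$. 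As written, your proposal is circular (or at best incomplete) on this point.

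The second gap is the step you yourself flag: that $\Lambda_H x=(B_H^*A_H+D^*C_H)x$ lies in $\im \d_\si(H)^{1/2}$ for \emph{every} $x\in\sX$ and that $x\mapsto(\d_\si(H)^{1/2})^{[-1]}\Lambda_Hx$ is bounded. This does not follow from ``density of $\im H^{1/2}$ plus boundedness of the operators involved'': the range $\im \d_\si(H)^{1/2}$ is in general not closed, and a bounded operator can map a dense subspace into a non-closed range without mapping the whole space into it. What makes the step true is the quantitative content of (CI4): for $z\in\im H^{1/2}$ one has $\|(\d_\si(H)^{1/2})^{[-1]}\Lambda_Hz\|^2\leq\|z\|^2-\|A_Hz\|^2-\|C_Hz\|^2\leq\|z\|^2$, so the map $z\mapsto(\d_\si(H)^{1/2})^{[-1]}\Lambda_Hz$ is bounded on a dense linear manifold; being the composition of the closed operator $(\d_\si(H)^{1/2})^{[-1]}$ with the bounded operator $\Lambda_H$ it is closed, hence everywhere defined and bounded. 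More simply, once the paper's contraction $\ga$ is in hand one has $\Lambda_Hx=-\b^*x=-\d_\si(H)^{1/2}\ga\a^{1/2}x$ and $(\d_\si(H)^{1/2})^{[-1]}\Lambda_Hx=-\ga\a^{1/2}x$ for all $x\in\sX$, which settles both the inclusion and the continuity you need for the density transfer to and from (C4). With these two items supplied, your completion of the square (the sign slip is harmless) does yield the equivalence; note that it amounts to re-deriving Proposition~\ref{prop:A2} in this special case, whereas the paper quotes Proposition~\ref{prop:A2} and instead tests whether $\ga$ is a partial isometry with initial space $\overline{\im\a}$. So your route for the equivalence is a legitimate, more computational alternative, but the proof of $H$-passivity and the boundedness of $(\d_\si(H)^{1/2})^{[-1]}\Lambda_H$ are exactly where the real work lies, and neither is supplied in the proposal.
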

\begin{proof}
We split the proof into two parts. First we show that the system
$\si$ is $H$-passive.

\smallskip
\noindent\textsc{Part 1.} Using the definitions in \eqref{defAH}, \eqref{defCH}, and \eqref{defBH} we see that condition (C3) can be rephrased as
\begin{itemize}
\item [\textup{(C3')}] the operator $\d_\si(H) =I_\sU-D^*D-B_H^*B_H$ is bounded and nonnegative, and
\begin{equation}\label{condC3a}
 (D^*C_H+B_H^*A_H)z\in \d_\si(H)^{1/2}\sU\quad (z\in \im H^{1/2}).
\end{equation}
\end{itemize}
Similarly,  (CI4) can be rephrased as
\begin{equation}\label{condCI4a}
\begin{aligned}
\|z\|^2-\|A_Hz\|^2 & -\|C_H z\|^2
\\
& \geq
\|\left(\d_\si(H)^{1/2}\right){}^{[-1]}(D^*C_H+B_H^*A_H)z
\|^2\quad (z\in \im H^{1/2}).
\end{aligned}
\end{equation} Next, put
\[
\a =I_\sX-A_H^*A_H - C_H^*C_H, \quad \b=-A_H^*B_H-C_H^*D, \quad \d=\d_\si(H).
\]
Then
\begin{equation}
\label{defR}
R:=I_{\sX\oplus \sU}- M(\si_H)^*M(\si_H)=
\begin{bmatrix} \a    &  \b  \\  \b^*    &  \d \end{bmatrix}.
\end{equation}
In order to prove that the system $\si$ is $H$-passive   we have to show that the $2\ts 2$ operator matrix in the right hand side of \eqref{defR} is nonnegative. To do this we apply Proposition ~\ref{prop:A1}.

Note that
$$
\begin{aligned}
\lg \a z, z\rg & = \lg z, z\rg- \lg A_H^*A_Hz, z\rg- \lg
C_H^*C_Hz, z\rg
\\
& =\| z\|^2- \| A_H z\|^2 - \| C_H z\|^2\geq 0 \quad
(z\in \im H^{1/2}).
\end{aligned}
$$
Since $\im H^{1/2}$ is dense in $\sX$ and   the operators $A_H$
and $C_H$ are bounded, the preceding inequality shows, by
continuity, that
\[
\lg \a x, x\rg=\| x\|^2- \| A_H x\|^2 - \| C_H x\|^2\geq 0 \quad (x\in \sX).
\]
Hence $\a\geq 0$. We already know that $\d=\d_\si(H)$ is nonnegative too. Next, note that  \eqref{condC3a} and \eqref{condCI4a} yield
\begin{align}
&\b^*z\in\d^{1/2}\sU \quad (z\in \im H^{1/2}), \label{condC3b} \\
&\lg \a z, z\rg\geq \|( \d^{1/2})^{[-1]} \b^*z  \|^2 \quad (z\in
\im H^{1/2}).  \label{condCI4b}
\end{align}

Recall that $\d$ is bounded and nonnegative. Thus $\d_0:=\d|_{\overline{\im \d}}$ is a one-to-one operator on $\overline{\im \d}$ and the range $\im \d_0$  is dense in $\overline{\im \d}$. Since
\[
(\d^{1/2})^{[-1]}=(\d^{[-1]})^{1/2}=\begin{bmatrix}\d_0^{-1/2}&0\\0&0\end{bmatrix}:
\begin{bmatrix}\overline{\im \d}\\ \kr \d \end{bmatrix} \to
\begin{bmatrix}\overline{\im \d}\\ \kr \d \end{bmatrix},
\]
we conclude that the range of $(\d^{1/2})^{[-1]}$ is a subset of  $\overline{\im \d}$. Now define
\begin{equation}\label{defGa0}
\begin{aligned}
&\ga_0: \a^{1/2}( \im H^{1/2})\to \overline{\im \d},
\\
&  \ga_0 (\a^{1/2}z)= (\d^{1/2})^{[-1]} \b^*z,  \quad  z\in \im H^{1/2}.
\end{aligned}
\end{equation}
According to the identity \eqref{condCI4b} the
operator $\ga_0$ is well defined and $\ga_0$ is a contraction.
Observe that
\[
\overline{ \a^{1/2}( \im H^{1/2})}= \overline{ \a^{1/2}\left( \overline{\im H^{1/2}}\right)}=\overline{ \a^{1/2}\sX}=\overline{ \a\sX}= \overline{\im  \a}.
\]
But then, by continuity, the contraction $\ga_0$ extends to a contraction $\wt{\ga_0}$ mapping $ \overline{\im  \a}$ into  $ \overline{\im  \d}$ and such that
\[
\d^{1/2}\wt{\ga_0} \a^{1/2}z= \d^{1/2}\ga_0 \a^{1/2}z=\d^{1/2}(\d^{1/2})^{[-1]} \b^*z=\b^*z\quad (z\in \im H^{1/2}).
\]
Here we used that $\d^{1/2}(\d^{1/2})^{[-1]}$ is the orthogonal projection onto $\overline{\im \d^{1/2}}$ and the fact that $\im \b^*\subset \overline{\im \d^{1/2}}$ which follows from \eqref{condC3b}.  Since $\im H^{1/2}$ is dense in $\sX$ and the operators  $\d^{1/2}\wt{\ga_0} \a^{1/2}$  and $\b^*$ are bounded operators, we conclude, by continuity, that $\b^*=\d^{1/2}\wt{\ga_0} \a^{1/2}$.  Finally, define $\ga:\sX\to \sU$ by
\begin{equation}
\label{defGa1}
\ga|_{\overline{\im  \a}}= \wt{\ga_0} \ands\ga|_{\kr \a}= 0.
\end{equation}
Then $\ga :\sX\to \sU$ is a contraction satisfying conditions (a) and (b) in Proposition
 \ref{prop:A1}, and hence  we can apply Proposition  \ref{prop:A1} with $T=R$ to show that the operator $R$
 in \eqref{defR} is nonnegative. Hence $M(\si_H)$ is a contraction, and the first part of the proposition  is proved.

\smallskip

\noindent\textsc{Part 2.} In this part given $H\in \textup{RI}_\si$ we show that $H\in \textup{RE}_\si$ if and only if \eqref{infsystM0} holds.   Since $H\in \textup{RI}_\si$ we can freely use the operators introduced in the previous part. In particular, $R$ is the operator defined by \eqref{defR} and $\ga$ is the contraction defined by \eqref{defGa1}.

First we assume that $H\in \textup{RE}_\si$. This  implies (see condition (C4)) that we have equality in \eqref{condCI4a} and in \eqref{condCI4b}, and hence the operator $\ga_0$ defined in \eqref{defGa0} is an isometry. But then,  following the reasoning in the previous part of the proof,  we see that $\wt{\ga_0}$, the continuous extension of ${\ga_0}$ to $\overline{\im \a}$, is an isometry too, and thus the operator $\ga$ defined by \eqref{defGa1} is a partial isometry with initial space $\im \a$. But then the Schur complement $Z=\a^{1/2}(I-\ga ^*\ga)\a^{1/2}$ is the zero operator, and we can apply Proposition \ref{prop:A2} to show that \eqref{infsystM0} holds.

The converse implication follows in a similar way reversing the arguments. Indeed, assume  \eqref{infsystM0} holds.  Then Proposition \ref{prop:A2} tells us that the Schur complement of  $R$   supported by $\sX$ is equal to zero. Here $R$ is given by \eqref{defR}. Thus $\a^{1/2}(I-\ga^*\ga)\a^{1/2}=0$, where $\ga$ is the minimal contraction determined by $R$, which in our case is  the contraction defined by \eqref{defGa1}.  Thus  $\ga$ is a partial isometry with initial space $\overline{\im \a}$. It follows that  $\ga_0$ defined by \eqref{defGa0}  also is  an isometry.  But then we have equality in \eqref{condCI4b} and hence also in \eqref{condCI4a}, Thus condition (C4) is satisfied which implies that $H\in \textup{RE}_\si$.
\end{proof}

We conclude this section with the following lemma. For the definition of the notion of pseudo-similarity we refer to
\cite[Section 3]{AKP06}.

\begin{lem}\label{lem:minsysH} Let $\si = (A, B, C, D; \sX, \sU, \sY)$ be a minimal system, and let  $H\in \textup{RI}_\si$. Then the systems $\si$ and $\si_H$ are pseudo-similar and $H^{1/2}$ is a pseudo-similarity from $\si$ to $\si_H$. Furthermore,  the inclusions in \eqref{obs2} are satisfied, and $\si_H$ is minimal if and only  if   both $H^{1/2} \im(A|B)$ and  $(H^{1/2})^{-1} \im (A^*|C^*)$ are   dense in $\sX$.
\end{lem}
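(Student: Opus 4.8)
\textbf{Plan of proof for Lemma \ref{lem:minsysH}.}

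The plan is to verify the four assertions in turn, using the definitions \eqref{defAH}--\eqref{defBH} of $\si_H$ and the properties of $H\in\textup{RI}_\si$ established above, together with the pseudo-similarity machinery of \cite[Section 3]{AKP06}.

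\emph{Pseudo-similarity.} First I would check directly from the definitions that $H^{1/2}$ is a pseudo-similarity from $\si$ to $\si_H$. Recall that a pseudo-similarity $Z$ from $\si$ to $\si_H$ is a closed, densely defined, injective operator with dense range such that $ZA\subset A_HZ$, $C_HZ\subset C$ (equivalently $C = C_HZ$ on $\sD(Z)$), and $ZB = B_H$. Here $Z=H^{1/2}$: it is closed, injective (since $H>0$), and its domain $\sD(H^{1/2})$ is dense; its range $\im H^{1/2}$ is dense because $H$ is positive selfadjoint. The intertwining relations $H^{1/2}Ax = A_HH^{1/2}x$ for $x\in\sD(H^{1/2})$, $Cx = C_HH^{1/2}x$ for $x\in\sD(H^{1/2})$, and $H^{1/2}Bu = B_Hu$ are \emph{exactly} the defining equations \eqref{defAH}, \eqref{defCH}, \eqref{defBH}. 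The only point needing a word is that $A\sD(H^{1/2})\subset\sD(H^{1/2})$ (needed so that $A_HH^{1/2}x$ makes sense via the definition), which is precisely condition (C2). Hence $H^{1/2}$ is a pseudo-similarity and $\si$, $\si_H$ are pseudo-similar.

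\emph{The inclusions \eqref{obs2}.} The first inclusion $\im(A|B)\subset\sD(H^{1/2})$ is immediate: by (C2), $B\sU\subset\sD(H^{1/2})$ and $A\sD(H^{1/2})\subset\sD(H^{1/2})$, so by induction $\im A^nB\subset\sD(H^{1/2})$ for all $n\ge 0$, and $\sD(H^{1/2})$ is a linear manifold. For the second inclusion, I would invoke the general fact about pseudo-similarities (cf. \cite[Section 3]{AKP06}): if $Z$ is a pseudo-similarity from $\si$ to $\si_H$, then $Z^{-1}$ (the inverse, which since $Z=H^{1/2}$ is again a positive selfadjoint operator, namely $H^{-1/2}$) is a pseudo-similarity from the dual system $\si^*$ to $(\si_H)^*$; applying the first-inclusion argument to the dual systems then gives $\im(A^*|C^*)\subset\sD(H^{-1/2})$. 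Concretely: the adjoint relations dual to (C2) are $A^*\sD(H^{-1/2})\subset\sD(H^{-1/2})$ and $C^*\sY\subset\sD(H^{-1/2})$, which follow by taking adjoints in the intertwining relations above (using that $A_H, C_H$ are bounded on $\sX$ and that $H^{-1/2}$ is selfadjoint), and then the same inductive argument yields $\im A^{*n}C^*\subset\sD(H^{-1/2})$ for all $n$.

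\emph{Minimality of $\si_H$.} Finally, since $H^{1/2}$ is a pseudo-similarity from $\si$ to $\si_H$ and $\si$ is minimal, one has from the general theory (cf. \cite[Section 3]{AKP06}) the identifications $\im(A_H|B_H) = \overline{H^{1/2}\,\im(A|B)}$ modulo closure, and $\kr(C_H|A_H)$ is trivial iff $\overline{(H^{1/2})^{-1}\,\im(A_H^*|C_H^*)}=\sX$, which translates to $\overline{H^{-1/2}\,\im(A^*|C^*)}=\sX$. More directly: applying $H^{1/2}$ to $\im A^nBu = A^nBu$ and using the intertwining relations gives $A_H^nB_Hu = H^{1/2}A^nBu$, so $\im(A_H|B_H) = H^{1/2}\,\im(A|B)$; hence $\si_H$ is controllable iff $H^{1/2}\,\im(A|B)$ is dense in $\sX$. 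Symmetrically, working with the dual systems and the pseudo-similarity $H^{-1/2}$ from $\si^*$ to $(\si_H)^*$, one gets $\im(A_H^*|C_H^*) = H^{-1/2}\,\im(A^*|C^*)$, so $\si_H$ is observable iff $H^{-1/2}\,\im(A^*|C^*)$ is dense in $\sX$. Combining, $\si_H$ is minimal iff both of these density conditions hold, which is the claim.

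\emph{Expected main obstacle.} The genuinely delicate point is the passage to the dual systems: one must verify carefully that $H^{-1/2}$ is well-defined on the relevant vectors and that it genuinely acts as a pseudo-similarity from $\si^*$ to $(\si_H)^*$ — in particular establishing $A^*\sD(H^{-1/2})\subset\sD(H^{-1/2})$ and the intertwining $H^{-1/2}A^*\subset A_H^*H^{-1/2}$ from the (bounded!) relations for $A_H$, $C_H$ by a clean adjoint computation. Once this duality is in place, the inclusion $\im(A^*|C^*)\subset\sD(H^{-1/2})$ and the observability characterization follow by the same routine induction as on the controllability side.
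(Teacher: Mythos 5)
Your proposal is correct and follows essentially the same route as the paper: you verify the pseudo-similarity conditions for $H^{1/2}$ directly from (C1), (C2) and the definitions of $A_H$, $B_H$, $C_H$, get the first inclusion in \eqref{obs2} by induction, and obtain the second inclusion and the observability criterion by passing to the adjoint systems, where $H^{-1/2}$ intertwines $\si^*$ with $(\si_H)^*$, combined with the identities $\im(A_H|B_H)=H^{1/2}\im(A|B)$ and $\im(A_H^*|C_H^*)=H^{-1/2}\im(A^*|C^*)$. The only cosmetic difference is that you sketch the adjoint intertwining relations by a direct adjoint computation, whereas the paper obtains them by invoking the final part of Proposition 3.1 in \cite{AKP06} (first inverting the pseudo-similarity, then passing to adjoints); your computation does go through, so this is not a gap.
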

\begin{proof}   Put $S=H^{1/2}$. To prove that $S$ is a
pseudo-similarity from $\si$ to $\si_H$ we have to check (see
formulas (3.1)--(3.4) in \cite{AKP06}) the following properties:
\begin{align}
\overline{\sD(S)}=\sX,   &\hspace{1cm} \overline{\im S}=\sX;
\label{pseusim1}
\\
A\sD(S)\subset \sD(S), &\hspace{1cm} SAx=A_HSx, \quad x\in\sD(S);
\label{pseusim2}
\\
B\sU \subset \sD(S), &\hspace{1cm} SB=B_H;\label{pseusim3}
\\
Cx=C_H Sx, \quad
 x\in\sD(S). & \label{pseusim4}
\end{align}
Condition (C1) in Definition \ref{SolRE} implies that $H^{1/2}(\sX\to \sX)$ is a closed, injective,  densely defined operator, and its  range is dense in $\sX$.  Since $S=H^{1/2}$, it follows that   \eqref{pseusim1} holds. Formulas \eqref{pseusim2} and \eqref{pseusim3} follow  from condition (C2) in Definition \ref{SolRE} using the definitions of $A_H$ and $B_H$ in \eqref{defAH} and \eqref{defBH}, respectively.  Formula  \eqref{pseusim4} follows from the definition of $C_H$ in \eqref{defCH}. Thus $S=H^{1/2}$ is a pseudo-similarity from $\si$ to $\si_H$.

The identities in the right hand side of  \eqref{pseusim2} and \eqref{pseusim3} tell us that $\im (A|B)$ is a subset of  $\sD(H^{1/2})$. Thus the the first inclusion in  \eqref{obs2} holds true. Furthermore, we have
\begin{align*}
\im (A_H|B_H)&= \linhull\, \{\im A_H^nB_H\mid  n=0,1,2, \ldots\} \\
&=\linhull\,\{\im H^{1/2}A^nB\mid  n=0,1,2, \ldots\} =H^{1/2}\im (A |B).
\end{align*}
This implies that $\si_H$ is controllable if and only if  $H^{1/2}\im (A |B)$ is dense in~$\sX$.

Next we apply the final part of Proposition 3.1 in \cite{AKP06}. It follows that $S^{-1}=H^{-1/2}$  is a pseudo--similarity from $\si_H$ to $\si$. But then $(S^{-1})^*=H^{-1/2}$ is a   pseudo--similarity from $\si^*$ to$ (\si_H)^* $, where
\begin{align}
\si^*&= (A^*, C^*, B^*, D^*; \sX, \sY, \sU), \label{adjointsyst}\\
(\si_H)^*& =(A_H^*, C_H^*, B_H^*, D^*; \sX, \sY, \sU). \label{adjointsystH}
\end{align}
In particular, using  \eqref{pseusim2} and   \eqref{pseusim3},  we have
\begin{align*}
A^*\sD(H^{-1/2})\subset \sD(H^{-1/2}),&\qquad
H^{-1/2}A^*x=A_H^*H^{-1/2}x, \quad x\in \sD(H^{-1/2});\\
C^*\sY\subset \sD(H^{-1/2}),&\qquad H^{-1/2}C^*=C_H^*.
\end{align*}
Thus  $\im (A^*|C^*) \subset  \sD(H^{-1/2})$, and hence the second inclusion in  \eqref{obs2} holds true. Furthermore, using the same calculation  for $A_H^*, C_H^*$  as for  $A_H, B_H$ in the previous paragraph, we obtain   $\im (A_H^*|C_H^*)=H^{-1/2}\im (A^*|C^*)$, which shows that  $\si_H$ is observable if and only if  the space $H^{-1/2}\im (A^*|C^*)$ is dense in $\sX$. This completes the proof.
\end{proof}
The system $\si^*$ defined by \eqref{adjointsyst} is called the \emph{adjoint} of  the system $\si$.  Using the main results of the next section we shall derive some further properties of the adjoint system  at the end of Section \ref{sec:finalthm2.5}.

\setcounter{equation}{0}
\section{The Kalman--Yakubovich--Popov inequality}\label{sec:KYP} The Riccati inequality is closely related to the
Kalman--Yakubovich--Popov inequality (for short, KYP inequality).
Recall (see Section 1 of \cite{AKP06})  that a  (possibly
unbounded)  selfadjoint operator $H$ acting in $\sX$ is called a
\emph{generalized solution  of the KYP  inequality associated to
$\si$} if conditions \textup{(C1)}  and \textup{(C2)} are
satisfied, and
\begin{equation}\label{condKsi}
K_\si(H)\begin{bmatrix} x\\ u\end{bmatrix} \geq 0, \quad x \in
\sD(H^{1/2}), \quad u \in \sU,
\end{equation}
where
\begin{align}
&K_\si(H) \begin{bmatrix} x\\ u\end{bmatrix} =
\big\|\begin{bmatrix}H^{1/2} & 0 \\ 0 & I_\sU \end{bmatrix}
\begin{bmatrix}  x \\ u \end{bmatrix}\big\|^2
-\big\|\begin{bmatrix} H^{1/2} & 0
\\ 0 & I_\sY \end{bmatrix}
\begin{bmatrix}  A & B \\ C & D \end{bmatrix}
\begin{bmatrix}  x \\ u \end{bmatrix}\big\|^2.\label{defKsi}
\end{align}
 Note that condition (C2) tells us that $Ax+Bu\in \sD(H^{1/2})$ whenever $x$ and $u$ are as in \eqref{condKsi}. Thus $K_\si(H)$ is well defined.    See \cite{ArovSt07}  for continuous time analogues  of the results in~\cite{AKP06}.

In what follows $\textup{KYP}_\si$ denotes the set of all generalized solution  of the KYP inequality associated to $\si$.  We shall prove the following theorem.

\begin{thm}\label{thm:RIKYP}   A   selfadjoint operator $H$ acting on $\sX$ belongs to $\textup{RI}_\si$ if and only if $H$ belongs to  $\textup{KYP}_\si$, that is,  $\textup{RI}_\si=\textup{KYP}_\si$.
\end{thm}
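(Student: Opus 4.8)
The plan is to show that, under conditions (C1) and (C2) — which are common to both definitions — the pointwise inequality \eqref{condKsi} for $K_\si(H)$ is equivalent to the conjunction of (C3) and (CI4). The right-hand side of \eqref{ricineq} should be recognized as the value of a Schur complement: the $2\times2$ form $K_\si(H)$ on $\sD(H^{1/2})\oplus\sU$ has, in the block decomposition along $\sX$ and $\sU$, the $(1,1)$-entry $\langle H^{1/2}x,H^{1/2}x\rangle-\langle H^{1/2}Ax,H^{1/2}Ax\rangle-\langle Cx,Cx\rangle$, the $(2,2)$-entry $\langle\d_\si(H)u,u\rangle$ with $\d_\si(H)=I_\sU-D^*D-(H^{1/2}B)^*H^{1/2}B$, and the cross term given by $-(D^*C+(H^{1/2}B)^*H^{1/2}A)$. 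First I would write this out carefully, using $B_H=H^{1/2}B$ and the contractions $A_H,C_H$ on $\overline{\im H^{1/2}}=\sX$ defined in \eqref{defAH}–\eqref{defCH}, so that $K_\si(H)$ restricted to $\im H^{1/2}\oplus\sU$ becomes exactly the quadratic form of the bounded operator $R$ from \eqref{defR} of Section~\ref{sec:Hpass} (with $\a=I-A_H^*A_H-C_H^*C_H$, $\b=-A_H^*B_H-C_H^*D$, $\d=\d_\si(H)$).

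With that identification in hand, the equivalence is a direct application of the appendix results on nonnegativity of $2\times2$ operator matrices (Proposition~\ref{prop:A1}). Recall that a block matrix $\begin{bmatrix}\a&\b\\ \b^*&\d\end{bmatrix}$ with $\d\geq0$ is nonnegative if and only if $\a\geq0$, $\im\b\subset\overline{\im\d^{1/2}}$ (equivalently $\b^*$ factors through $\d^{1/2}$), and the resulting contraction estimate $\langle\a x,x\rangle\geq\|(\d^{1/2})^{[-1]}\b^*x\|^2$ holds. For the forward direction ($H\in\textup{RI}_\si\Rightarrow H\in\textup{KYP}_\si$): conditions (C3) and (CI4) say precisely that $\d\geq0$, that $\b^*z\in\d^{1/2}\sU$ for $z\in\im H^{1/2}$ (inclusion \eqref{incluC3}), and that $\langle\a z,z\rangle\geq\|(\d^{1/2})^{[-1]}\b^*z\|^2$ on $\im H^{1/2}$; by the continuity/density argument already carried out in Part~1 of the proof of Theorem~\ref{contracMRI}, these extend to all of $\sX$, so $R\geq0$, and testing $R$ against vectors $\begin{bmatrix}H^{1/2}x\\ u\end{bmatrix}$ gives \eqref{condKsi}. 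For the converse ($H\in\textup{KYP}_\si\Rightarrow H\in\textup{RI}_\si$): from $K_\si(H)\geq0$ on $\sD(H^{1/2})\oplus\sU$ one first sets $x=0$ to get $\d_\si(H)\geq0$ (part of (C3)); then the off-diagonal necessary condition of Proposition~\ref{prop:A1} forces $\im\b^*\subset\overline{\im\d^{1/2}}$, i.e.\ the inclusion \eqref{incluC3} of (C3); and finally the scalar inequality extracted from nonnegativity of the $2\times2$ form is exactly (CI4).

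The one point requiring genuine care — and the step I expect to be the main obstacle — is the passage between the \emph{pointwise} hypotheses, which live only on the dense non-closed manifold $\im H^{1/2}$ (or $\sD(H^{1/2})$), and the statement that the \emph{bounded} operator $R$ on all of $\sX\oplus\sU$ is nonnegative. In the forward direction this is handled by continuity since $\a,\b,\d$ are bounded and $\im H^{1/2}$ is dense; one must, however, check that the factorization $\b^*=\d^{1/2}\wt\ga_0\a^{1/2}$ and the extension of the contraction $\ga_0$ to $\overline{\im\a}$ — exactly the construction in \eqref{defGa0}–\eqref{defGa1} — go through, and this is already done in Section~\ref{sec:Hpass}, so I would simply invoke it. In the converse direction one must note that $\{(H^{1/2}x,u):x\in\sD(H^{1/2}),u\in\sU\}$ is dense in $\sX\oplus\sU$, so $K_\si(H)\geq0$ there yields $R\geq0$ on the whole space, and then read off (C3) and (CI4) by specializing back to vectors of the form $(H^{1/2}x,u)$. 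I would close by remarking that this equivalence is what lets Theorem~\ref{thm2a} and (the non-final part of) Theorem~\ref{thm2b} be identified with Theorems~1.2 and~5.1 of \cite{AKP06}. \epr
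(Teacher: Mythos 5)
Your proposal is correct and follows essentially the same route as the paper: both directions run through the nonnegativity of $R=I_{\sX\oplus\sU}-M(\si_H)^*M(\si_H)$ (forward via the $H$-passivity already established in Theorem~\ref{contracMRI}, converse via Proposition~\ref{prop:A1} applied to $R$, the paper citing \cite[Proposition 4.2]{AKP06} where you use the density of $\{(H^{1/2}x,u)\}$ directly), with $K_\si(H)$ read off by testing $R$ on vectors $\begin{bmatrix}H^{1/2}x\\ u\end{bmatrix}$. One wording caveat: in the converse, condition (C3) requires $\b^*\big(\im H^{1/2}\big)\subset \d_\si(H)^{1/2}\sU$, i.e.\ inclusion in the actual range rather than in $\overline{\im\,\d_\si(H)^{1/2}}$ as you state, and this comes from the factorization $\b^*=\d_\si(H)^{1/2}\ga\,\a^{1/2}$ furnished by Proposition~\ref{prop:A1}, exactly as in the paper.
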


When  Theorem \ref{thm:RIKYP} is proved, then Theorem \ref{thm2a}
is proved too. In fact, if  Theorem \ref{thm:RIKYP} is proved,
then Theorem \ref{thm2a} is equivalent to Theorem 1.2. in
\cite{AKP06}. Analogously, Theorem \ref{thm2b} (except for the
final sentence)  is equivalent to Theorem 5.1 in \cite{AKP06}. The
statement in the final sentence of Theorem \ref{thm2b}  will be
proved in the next section.

We shall denote by $\textup{KYP}_\si^\circ$ the set of all $H$ in
$\textup{KYP}_\si$ that satisfy the additional conditions (a) and
(b) appearing in the paragraph preceding Theorem~\ref{thm1a}. From
Lemma~\ref{lem:minsysH}  we know that  condition  (a)  just means
that $\si_H$ is minimal. It follows that $\textup{KYP}_\si^\circ$
coincides with the set  which in \cite{AKP06} is denoted by
$\sG\sK_{\si, \textup{core}}^{\textup{min}}$; see \cite[formulas
(5.1) and (5.2)]{AKP06}.  Using Theorem \ref{thm:RIKYP} and the
definitions of the sets $\textup{RI}_\si^\circ$ and
$\textup{KYP}_\si^\circ$ we obtain the following corollary.

\begin{cor}\label{cor:RIKYPcirc} A   selfadjoint operator $H$ acting on $\sX$ belongs to $\textup{RI}_\si^\circ$ if and only if $H$ belongs to  $\textup{KYP}_\si^\circ$, that is,  $\textup{RI}_\si^\circ=\textup{KYP}_\si^\circ$.
\end{cor}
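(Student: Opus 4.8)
The plan is to obtain the corollary as an immediate consequence of Theorem \ref{thm:RIKYP} by observing that $\textup{RI}_\si^\circ$ and $\textup{KYP}_\si^\circ$ are carved out of $\textup{RI}_\si$ and $\textup{KYP}_\si$, respectively, by imposing one and the same pair of conditions --- namely (a) and (b) from the paragraph preceding Theorem \ref{thm1a} --- and that these conditions refer only to the operator $H$ together with the system data $A,B,C$, and not to which of the two inequalities $H$ happens to satisfy.

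First I would verify that (a) and (b) are well posed for every operator in play. By Theorem \ref{thm:RIKYP} the sets $\textup{RI}_\si$ and $\textup{KYP}_\si$ coincide, so any selfadjoint $H$ belonging to one of them belongs to both; in particular Lemma \ref{lem:minsysH} applies and yields the inclusions \eqref{obs2}. These inclusions are exactly what guarantees that the manifolds $H^{1/2}\im(A|B)$ and $H^{-1/2}\im(A^*|C^*)$ appearing in (a) are defined, and that the core statement (b) makes sense. Consequently there is a single family, say $\mathcal{C}$, of selfadjoint operators $H$ satisfying (a) and (b), and it is this same family that is intersected with each of $\textup{RI}_\si$ and $\textup{KYP}_\si$ to produce the two $\circ$-sets.

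It then remains only to intersect. By definition $\textup{RI}_\si^\circ=\textup{RI}_\si\cap \mathcal{C}$ and $\textup{KYP}_\si^\circ=\textup{KYP}_\si\cap \mathcal{C}$, so the equality $\textup{RI}_\si=\textup{KYP}_\si$ furnished by Theorem \ref{thm:RIKYP} gives at once $\textup{RI}_\si^\circ=\textup{KYP}_\si^\circ$. Spelled out for a fixed selfadjoint $H$: one has $H\in \textup{RI}_\si^\circ$ precisely when $H\in \textup{RI}_\si$ and (a), (b) hold, precisely when $H\in \textup{KYP}_\si$ and (a), (b) hold (the middle step being Theorem \ref{thm:RIKYP}), precisely when $H\in \textup{KYP}_\si^\circ$.

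There is essentially no obstacle beyond Theorem \ref{thm:RIKYP} itself. The only point demanding a moment's care is the bookkeeping above --- confirming that (a) and (b) are literally identical restrictions in the two definitions and do not secretly depend on the inequality satisfied by $H$, together with the well-definedness of the manifolds in (a), which is supplied by \eqref{obs2}. The reformulation noted just before the corollary, that by Lemma \ref{lem:minsysH} condition (a) amounts to minimality of $\si_H$, gives a convenient alternative description of $\mathcal{C}$ but is not needed for the logical equivalence.
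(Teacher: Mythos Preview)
Your proposal is correct and matches the paper's own treatment: the corollary is stated immediately after the sentence ``Using Theorem \ref{thm:RIKYP} and the definitions of the sets $\textup{RI}_\si^\circ$ and $\textup{KYP}_\si^\circ$ we obtain the following corollary,'' with no further proof given. Your argument is just a careful unpacking of that sentence, so there is nothing to add.
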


In order to prove  Theorem \ref{thm:RIKYP} we need some  preliminaries. Assume that $H\in \textup{KYP}_\si$. By specifying \eqref{condKsi} for the vectors  $(x,0)$ and $(0,u)$ we see that
\begin{align*}
&\|H^{1/2} x \|^2 - \|H^{1/2} A x \|^2 - \|C x\|^2 \geq 0 \quad \big(x\in \sD(H^{1/2})\big), \\[.2cm]
&\|u\|^2 - \|D u \|^2 - \|H^{1/2} B u \|^2  \geq 0 \quad  (u\in \sU).
\end{align*}
As we proved in Subsection 4.1  of  \cite{AKP06}, this allows one to define  operators $A_H$, $B_H$ and $C_H$ in the same way as in the paragraphs preceding Proposition  \ref{contracMRI}. Also in this setting the resulting system $\si_H$, defined as in \eqref{defsiH}, is  called the \emph{the system  associated  with $\si$ and  $H$}. The following lemma, which is the analogue of  the first part of  Proposition  \ref{contracMRI}  with $H\in \textup{KYP}_\si$ in place of  $H\in \textup{RI}_\si$, is covered by Proposition 4.2 in \cite{AKP06}.

\begin{lem}\label{contrMKYP}Let $H\in \textup{KYP}_\si$. Then the system $\si_H$ associated with $\si$ and  $H$ is passive.
\end{lem}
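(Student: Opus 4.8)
The plan is to prove $\si_H$ passive by showing that its system matrix $M(\si_H)$ is a contraction, which by definition is exactly what passivity of $\si_H$ means. The whole argument hinges on a single identity: on the dense subspace $\im H^{1/2}$ the contractivity defect of $M(\si_H)$ coincides with the KYP quadratic form $K_\si(H)$, which is nonnegative by hypothesis. Everything else is a routine boundedness-plus-density extension, exactly parallel to Part~1 of the proof of Proposition~\ref{contracMRI}, only with $K_\si(H)\geq 0$ replacing condition (CI4).

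First I would check that all four blocks of $M(\si_H)$ are bounded. Specializing the KYP inequality \eqref{condKsi} to the vector $(x,0)$ gives $\|H^{1/2}x\|^2-\|H^{1/2}Ax\|^2-\|Cx\|^2\geq 0$ for $x\in\sD(H^{1/2})$, which in the notation \eqref{defAH}--\eqref{defCH} reads $\|z\|^2-\|A_Hz\|^2-\|C_Hz\|^2\geq 0$ for $z=H^{1/2}x\in\im H^{1/2}$; hence $A_H$ and $C_H$ are contractive on the dense subspace $\im H^{1/2}$ and extend by continuity to contractions on $\sX$. Specializing \eqref{condKsi} to $(0,u)$ gives $\|u\|^2-\|Du\|^2-\|H^{1/2}Bu\|^2\geq 0$, i.e. $\|B_Hu\|\leq\|u\|$, so $B_H$ is a contraction as well. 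Density of $\im H^{1/2}$ in $\sX$ is immediate from (C1), as already used in Lemma~\ref{lem:minsysH}.

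The key step is the defect identity. For $x\in\sD(H^{1/2})$, $z=H^{1/2}x$, and $u\in\sU$, the defining relations \eqref{defAH}--\eqref{defBH} yield
\[
M(\si_H)\begin{bmatrix} z\\ u\end{bmatrix}=\begin{bmatrix} A_Hz+B_Hu\\ C_Hz+Du\end{bmatrix}=\begin{bmatrix} H^{1/2}(Ax+Bu)\\ Cx+Du\end{bmatrix},
\]
where $Ax+Bu\in\sD(H^{1/2})$ by (C2), so the right-hand side is meaningful. Consequently
\[
\Big\|\begin{bmatrix} z\\ u\end{bmatrix}\Big\|^2-\Big\|M(\si_H)\begin{bmatrix} z\\ u\end{bmatrix}\Big\|^2=\|H^{1/2}x\|^2+\|u\|^2-\|H^{1/2}(Ax+Bu)\|^2-\|Cx+Du\|^2=K_\si(H)\begin{bmatrix} x\\ u\end{bmatrix}\geq 0.
\]

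Finally I would extend this nonnegativity from the dense subspace $\im H^{1/2}\oplus\sU$ to all of $\sX\oplus\sU$. Since the four blocks of $M(\si_H)$ are bounded, the map $(z,u)\mapsto\|(z,u)\|^2-\|M(\si_H)(z,u)\|^2$ is continuous on $\sX\oplus\sU$; being nonnegative on a dense subspace, it is nonnegative everywhere. Therefore $M(\si_H)$ is a contraction and $\si_H$ is passive. I do not expect a genuine obstacle here: the only delicate point is the legitimacy of the continuity extension, which is underwritten precisely by the density of $\im H^{1/2}$ together with the boundedness established in the first step — this is why verifying boundedness of $A_H$, $B_H$, $C_H$ at the outset is essential rather than cosmetic.
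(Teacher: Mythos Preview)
Your proof is correct. The paper does not actually prove this lemma but simply cites \cite[Proposition~4.2]{AKP06}; however, your defect identity $K_\si(H)\begin{bmatrix}x\\u\end{bmatrix}=\|\begin{bmatrix}z\\u\end{bmatrix}\|^2-\|M(\si_H)\begin{bmatrix}z\\u\end{bmatrix}\|^2$ for $z=H^{1/2}x$ is precisely the computation the paper carries out in Part~1 of the proof of Theorem~\ref{thm:RIKYP} (used there in the reverse direction, to deduce \eqref{condKsi} from contractivity of $M(\si_H)$), so your approach is fully in line with the paper's methods.
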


\noindent\textit{Proof of Theorem \ref{thm:RIKYP}}. We split the
proof into to parts. In the first part we show that $H\in
\textup{RI}_\si$ implies that  $H\in \textup{KYP}_\si$. The second
part proves the reverse implication.

\smallskip
\noindent\textsc{Part 1.}  Let $H\in \textup{RI}_\si$, and let $\si_H=(A_H, B_H, C_H, D; \sX, \sU, \sY)$ be the system associated  with $\si$ and  $H\in \textup{RI}_\si$.  In particular, $H$ satisfies conditions (C1) and (C2). Thus it remains to prove \eqref{condKsi}. In order to that, fix $x\in \sD(H^{1/2})$ and $u \in \sU$. Then
\begin{align*}
K_\si(H) \begin{bmatrix}x\\u\end{bmatrix}&= \big\|\begin{bmatrix}H^{1/2} & 0 \\ 0 & I_\sU \end{bmatrix}
\begin{bmatrix} x\\ u\end{bmatrix}\big\|^2-
\big\|\begin{bmatrix}H^{1/2} & 0 \\ 0 & I_\sY \end{bmatrix}
\begin{bmatrix} A& B\\ C& D\end{bmatrix}
\begin{bmatrix} x\\ u\end{bmatrix}\big\|^2 \\
&=\big\|\begin{bmatrix} H^{1/2}x\\ u \end{bmatrix}\big\|^2-
\big\|\begin{bmatrix}H^{1/2}A&H^{1/2}B\\ C& D\end{bmatrix}\begin{bmatrix} x\\ u \end{bmatrix}\big\|^2\\
&=\big\|\begin{bmatrix} H^{1/2}x\\ u \end{bmatrix}\big\|^2-
\big\|\begin{bmatrix}A_H&B_H\\ C_H& D\end{bmatrix}\begin{bmatrix} H^{1/2}x\\ u \end{bmatrix}\big\|^2.
\end{align*}
But, by Theorem \ref{contracMRI}, the system matrix $M(\si_H)$  is a contraction. It follows that
\[
\big\|\begin{bmatrix}A_H&B_H\\ C_H& D\end{bmatrix}\begin{bmatrix} H^{1/2}x\\ u \end{bmatrix}\big\|\leq \big\|
\begin{bmatrix} H^{1/2}x\\ u \end{bmatrix}\big\|, \quad  x\in \sD(H^{1/2}), \quad u\in \sU.
\]
Thus \eqref{condKsi} holds true.

\smallskip
\noindent\textsc{Part 2.}
Let $H\in \textup{KYP}_\si$, and let $\si_H=(A_H, B_H, C_H, D; \sX, \sU, \sY)$ be the system associated  with $\si$ and  $H\in \textup{KYP}_\si$.   Since $H\in \textup{KYP}_\si$, we know that  conditions \textup{(C1)}  and \textup{(C2)} are satisfied.  It remains to check (C3) and (CI4).  According to Lemma \ref{contrMKYP}, the system matrix  $M(\si_H)$ is a contraction. This implies that the operator  $T$ defined by
\begin{equation}\label{poscond}
T=\begin{bmatrix}I_\sX-A_H^*A_H -C_H^* C_H&-A_H^*B_H-C_H^* D\\[.2cm]
-B_H^*A_H-D^*C_H &I_\sU-B_H^*B_H-D^*D \end{bmatrix}
\end{equation}
is a bounded nonnegative operator on the Hilbert space direct sum $\sX\oplus \sU$. This allows us to apply Proposition \ref{prop:A1} with
\begin{align}
&\a=I_\sX-A_H^*A_H -C_H^* C_H, \quad \b=-A_H^*B_H-C_H^* D, \label{abd1}\\
&\hspace{2cm} \d=I_\sU-B_H^*B_H-D^*D.\label{abd2}
\end{align}
Since $T$ defined by \eqref{poscond} is nonnegative,  Proposition \ref{prop:A1}  tells us that  $\a$ and $\d$ are nonnegative, and there exists a contraction $\ga$ mapping  $\sX$ into $\sU$ such that
\begin{equation}
\label{3cond}
\kr \ga \supset \kr \a, \quad \im \ga \subset \overline{\im \d}, \quad \b^*=\d^{1/2}\ga \a^{1/2}.
\end{equation}
Since $H^{1/2}B=B_H$ is a well-defined bounded operator (see \eqref{defBH}), we have
\[
\d_\si(H)= I_\sU-D^*D-(H^{1/2}B)^*H^{1/2}B=I_\sU-D^*D-B_H^*B_H =\d,
\]
and  hence $\d_\si(H)=\d$ is bounded  and nonnegative because $T$ given by  \eqref{poscond} is bounded  and nonnegative.  Furthermore, the  inclusion \eqref{incluC3} follows from the identity in the third part of \eqref{3cond}. To see this, note the equality  $\b^*=\d^{1/2}\ga \a^{1/2}$ implies that $\im \b^*\subset \im \d^{1/2}$.  Specifying this inclusion  for  $\b$ and $\d$  given by   \eqref{abd1} and \eqref{abd2}, respectively,   and using $\d_\si(H)=\d$ we obtain
\begin{align*}
\Big(D^*C &+(H^{1/2}B)^*H^{1/2}A\Big)\sD(H^{1/2})
\\
&
 =\Big (A_H^*B_H+C_H^*D\Big)\im H^{1/2} \subset \im
(A_H^*B_H+C_H^*D)\subset  \im \d_\si(H)^{1/2}.
\end{align*}
This proves  the inclusion \eqref{incluC3}.  Thus (C3) is satisfied.

It remains to prove the inequality \eqref{ricineq}. To do this we first observe that with  our choice of $H$, the inequality \eqref{ricineq} is equivalent to
\begin{align*}
\|z\|^2  -\|A_Hz\|^2 & -\|C_Hz\|^2
\\
& \geq \|(\d_\si(H)^{1/2})^{[-1]} (D^*C_H+B_H^*A_H)z\|^2, \quad
z\in \im H^{1/2}.
\end{align*}
Thus, using the two identities in \eqref{abd1}  and  $\d_\si(H)=\d$, in order to prove
\eqref{ricineq} we have to show that
\begin{equation}
\label{basicineq1}
\| \a^{1/2}z \|\geq \| (\d^{1/2})^{[-1]}\b^*z\|, \quad z\in \im H^{1/2}.
\end{equation}
But $\b^*=\d^{1/2}\ga \a^{1/2}$  yields $ (\d^{1/2})^{[-1]}\b^*
=\ga  \a^{1/2}$.  Since $\ga$ is a contraction,  we see that the
inequality in \eqref{basicineq1} holds for any $z\in \sX$. Thus
condition (CI4) is also satisfied.
{\hfill $\Box$}

\medskip

\begin{thm}The set $\textup{RI}_\si\not =\emptyset$ if and only if $\si$ is pseudo-similar to a passive system.
\end{thm}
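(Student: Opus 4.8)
The plan is to prove both implications using the machinery already assembled. For the ``only if'' direction, suppose $H\in \textup{RI}_\si$. By Theorem \ref{contracMRI} the system $\si$ is $H$-passive, which by definition means that the associated system $\si_H$ (with system matrix $M(\si_H)$) is passive. So it remains only to produce a (pseudo-)similarity between $\si$ and $\si_H$. But this is precisely the content of the relevant part of Lemma \ref{lem:minsysH}: $S=H^{1/2}$ is a pseudo-similarity from $\si$ to $\si_H$, the verification of \eqref{pseusim1}--\eqref{pseusim4} following directly from (C1)--(C2) and the definitions \eqref{defAH}--\eqref{defBH}. Hence $\si$ is pseudo-similar to the passive system $\si_H$. (One subtlety: Lemma \ref{lem:minsysH} is stated for \emph{minimal} $\si$, but only (C1) and (C2) are used to establish \eqref{pseusim1}--\eqref{pseusim4} and the pseudo-similarity assertion; I would either restate that portion of the lemma without the minimality hypothesis or simply reproduce the short verification here.)

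For the ``if'' direction, suppose $\si$ is pseudo-similar to a passive system $\si_1=(A_1,B_1,C_1,D_1;\sX_1,\sU,\sY)$, say via a pseudo-similarity $S:\sX\to\sX_1$ (so $S$ is closed, injective, densely defined with dense range, $AS^{-1}$-intertwining in the sense of the analogue of \eqref{pseusim1}--\eqref{pseusim4}, with $SB=B_1$, $C_1S=C$, $SA\subset A_1S$; note that pseudo-similarity forces $D_1=D$). The natural candidate for a solution of the Riccati inequality is $H:=S^*S$, a positive selfadjoint operator in $\sX$ with $H^{1/2}=|S|$ (the modulus from the polar decomposition $S=W|S|$, $W$ a unitary from $\overline{\ran |S|}=\sX$ onto $\overline{\ran S}=\sX_1$). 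I would then check the four conditions: (C1) is immediate since $S$ is injective; (C2) follows because $\sD(H^{1/2})=\sD(|S|)=\sD(S)$ is $A$-invariant (from the intertwining) and contains $B\sU$ (from $SB=B_1$); for (C3) and (CI4), the point is that under the unitary $W$ the operators $A_H,B_H,C_H$ built from $\si$ and $H$ are unitarily equivalent to $A_1,B_1,C_1$, so $M(\si_H)$ is unitarily equivalent to $M(\si_1)$, hence contractive; then $\d_\si(H)=I-D^*D-B_H^*B_H$ is the lower-right corner of $I-M(\si_H)^*M(\si_H)\ge 0$, hence bounded and nonnegative, and the inclusion \eqref{incluC3} together with the inequality \eqref{ricineq} drop out exactly as in Part~1 of the proof of Theorem~\ref{thm:RIKYP}: apply Proposition~\ref{prop:A1} to the nonnegative $2\times2$ matrix $I-M(\si_H)^*M(\si_H)$ to get $\b^*=\d^{1/2}\ga\a^{1/2}$ with $\ga$ a contraction, which yields both the range inclusion $\ran\b^*\subset\ran\d^{1/2}$ and the estimate $\|\a^{1/2}z\|\ge\|(\d^{1/2})^{[-1]}\b^*z\|=\|\ga\a^{1/2}z\|$. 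Thus $H\in\textup{RI}_\si$.

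Alternatively, and perhaps more cleanly, for the ``if'' direction I would route through $\textup{KYP}_\si$ and Theorem~\ref{thm:RIKYP}: it suffices to show $\textup{KYP}_\si\neq\emptyset$, and $H=S^*S$ satisfies the KYP inequality \eqref{condKsi} because, writing $H^{1/2}=|S|$ and using the intertwining relations, $K_\si(H)[x;u]$ equals $\|[\,|S|x;u]\|^2-\|M(\si_1)[\,|S|x;u]\|^2$ up to the unitary $W$ (more precisely the second term is $\|M(\si_1)[W|S|x;u]\|^2$ after identifying via $W$), which is nonnegative since $M(\si_1)$ is a contraction — this is a one-line computation mirroring Part~1 of the proof of Theorem~\ref{thm:RIKYP}. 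Then $\textup{RI}_\si=\textup{KYP}_\si\neq\emptyset$.

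The main obstacle I anticipate is bookkeeping around the unbounded pseudo-similarity $S$: one must be careful that $\sD(S)=\sD(|S|)=\sD(H^{1/2})$, that the intertwining relation $SA\subset A_1S$ passes correctly to $A_H|S| = |S|A$ on $\sD(H^{1/2})$ after absorbing the partial-isometry/unitary factor $W$ from the polar decomposition (here one uses $\overline{\ran S}=\sX_1$ so that $W$ is unitary, not merely a partial isometry), and that $B\sU\subset\sD(S)$ is genuinely part of the definition of pseudo-similarity (it is, by the analogue of \eqref{pseusim3}). Everything else is a transcription of arguments already carried out in the proof of Theorem~\ref{thm:RIKYP} and Lemma~\ref{lem:minsysH}, so once the domain identifications are pinned down the verification of (C1)--(C3) and (CI4) is routine.
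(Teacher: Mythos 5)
Your proposal is correct, but it takes a genuinely different route from the paper. The paper disposes of this theorem in two lines: the corresponding statement for $\textup{KYP}_\si$ is quoted from \cite[Theorem 4.1]{AKP06}, and then $\textup{RI}_\si=\textup{KYP}_\si$ (Theorem \ref{thm:RIKYP}) transfers it to $\textup{RI}_\si$. You instead prove both implications from scratch: for ``only if'' you combine Theorem \ref{contracMRI} with the pseudo-similarity portion of Lemma \ref{lem:minsysH}, correctly observing that minimality of $\si$ is not used in verifying \eqref{pseusim1}--\eqref{pseusim4}; for ``if'' you construct $H=S^*S$, identify $H^{1/2}=|S|$ with $\sD(H^{1/2})=\sD(S)$, use the polar decomposition $S=W|S|$ (with $W$ unitary because $S$ is injective with dense range) to show $M(\si_H)$ is unitarily equivalent to the contraction $M(\si_1)$, and then extract (C3) and (CI4) via Proposition \ref{prop:A1} exactly as in the proof of Theorem \ref{thm:RIKYP} (note that the argument you invoke there is its Part 2, not Part 1 — a labelling slip only). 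Your second, ``cleaner'' route (verify $H=S^*S\in\textup{KYP}_\si$ directly and invoke $\textup{RI}_\si=\textup{KYP}_\si$) partially coincides with the paper's, except that the paper does not reprove the KYP statement but cites it. What the paper's approach buys is brevity and consistency with its overall strategy of reducing everything to \cite{AKP06}; what yours buys is a self-contained argument, in effect reproducing the relevant half of \cite[Theorem 4.1]{AKP06} inside the present framework, at the cost of the domain and polar-decomposition bookkeeping you rightly flag, and of restating the non-minimal version of the pseudo-similarity part of Lemma \ref{lem:minsysH}.
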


\begin{proof}
We know (Theorem 4.1 in \cite{AKP06}) that  this is true for
$\textup{KYP}_\si$ in place of  $\textup{RI}_\si$. By Theorem
\ref{thm:RIKYP}, we have  $\textup{RI}_\si=\textup{KYP}_\si$.
Hence the result is also true for $\textup{RI}_\si$ in place of
$\textup{KYP}_\si$.
\end{proof}

\setcounter{equation}{0}
\section{Proof of the final statement in Theorem \ref{thm2b}}\label{sec:finalthm2.5}

The following proposition covers the final statement in Theorem \ref{thm2b}.

\begin{prop}\label{prop:minelt} Let $\si = (A, B, C, D; \sX, \sU, \sY)$ be a minimal system,  and assume that  its transfer function  coincides with a Schur class function in a neighborhood  of zero. If $H_\circ$ is a minimal element  in  $\textup{RI}_\si^\circ$ with respect to the usual ordering of nonnegative operators, then $H_\circ\in \textup{RE}_\si^\circ$.
\end{prop}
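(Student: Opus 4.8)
The plan is to use Theorem \ref{contracMRI}, which reduces the statement $H_\circ \in \textup{RE}_\si^\circ$ to verifying the infimum condition \eqref{infsystM0} for the associated system matrix $M(\si_{H_\circ})$. Indeed, since $H_\circ \in \textup{RI}_\si^\circ \subset \textup{RI}_\si$, Theorem \ref{contracMRI} tells us that $H_\circ \in \textup{RE}_\si$ precisely when
\[
\inf\left\{\|\begin{bmatrix} x\\u\end{bmatrix}\|^2 - \|M(\si_{H_\circ})\begin{bmatrix} x\\u\end{bmatrix}\|^2 \mid u\in\sU\right\} = 0 \quad (x\in\sX);
\]
and once $H_\circ \in \textup{RE}_\si$ is established, the conditions (a) and (b) are already part of membership in $\textup{RI}_\si^\circ$, so $H_\circ \in \textup{RE}_\si^\circ$ follows at once. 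Thus the whole task is to prove the displayed infimum condition, using that $H_\circ$ is \emph{minimal} in $\textup{RI}_\si^\circ$.

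The core idea is a perturbation/contradiction argument. Suppose the infimum condition fails at some $x_0 \in \sX$; then the Schur complement $Z = \a^{1/2}(I - \ga^*\ga)\a^{1/2}$ of the operator $R = I - M(\si_{H_\circ})^*M(\si_{H_\circ})$ supported by $\sX$ is a nonzero nonnegative operator (by Proposition \ref{prop:A2}, as used in the proof of Theorem \ref{contracMRI}). Geometrically this means the passive system $\si_{H_\circ}$ has a nontrivial ``defect'' in its state dynamics that is not forced by the input. The plan is to use this defect to construct a strictly smaller element of $\textup{RI}_\si^\circ$, contradicting minimality of $H_\circ$. Concretely, I would invoke the correspondence $\textup{RI}_\si^\circ = \textup{KYP}_\si^\circ = \sG\sK_{\si,\textup{core}}^{\textup{min}}$ from Corollary \ref{cor:RIKYPcirc} together with the structure theory from \cite{AKP06}: the minimal element $H_\circ$ of $\sG\sK_{\si,\textup{core}}^{\textup{min}}$ corresponds (via pseudo-similarity, Lemma \ref{lem:minsysH}) to the \emph{observable co-isometric} (or optimal) passive realization of $\th_\si$, and for that distinguished realization the Schur complement $Z$ vanishes identically. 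In other words, I would trace through the identification of $H_\circ$ with the Gram operator of the optimal minimal passive realization and show that for this particular $H$ the contraction $\ga$ built in the proof of Theorem \ref{contracMRI} is automatically a partial isometry with initial space $\overline{\im\,\a}$, which is exactly condition \eqref{infsystM0}.

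The main obstacle, and the step requiring the most care, will be pinning down precisely \emph{which} minimality property of $H_\circ$ inside $\textup{RI}_\si^\circ$ translates into the vanishing of the Schur complement $Z$. The subtlety is that the ordering $\prec$ on (possibly unbounded) nonnegative selfadjoint operators is delicate, and the correspondence between elements of $\sG\sK_{\si,\textup{core}}^{\textup{min}}$ and minimal passive realizations of $\th_\si$ in \cite{AKP06} is stated in terms of that ordering; I need the fact that the $\prec$-least element corresponds to the realization whose state operator matrix has zero Schur complement supported by $\sX$, i.e.\ the one that is, up to the pseudo-similarity, co-isometric on the relevant subspace. I would extract this from Theorem 5.1 (and the surrounding discussion) in \cite{AKP06}, where the optimal minimal passive realization is characterized. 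An alternative, more self-contained route is the direct perturbation argument: if $Z \neq 0$, replace $H_\circ$ by the operator associated with the realization obtained by ``dividing out'' the defect subspace, verify it still lies in $\textup{RI}_\si^\circ$ (this is where conditions (a), (b) and the core property must be rechecked), and observe it is strictly $\prec H_\circ$ — contradiction. Either way, the crux is the passage from ``minimal in $\textup{RI}_\si^\circ$'' to ``zero Schur complement'', after which Theorem \ref{contracMRI} closes the argument immediately.
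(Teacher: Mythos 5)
Your reduction is exactly the paper's frame: since conditions (a) and (b) are built into membership in $\textup{RI}_\si^\circ$, it suffices to get $H_\circ\in \textup{RE}_\si$, and by Theorem \ref{contracMRI} this amounts to the infimum condition \eqref{infsystM0} for $M(\si_{H_\circ})$, i.e.\ to the vanishing of the Schur complement of $I-M(\si_{H_\circ})^*M(\si_{H_\circ})$ supported by $\sX$. The paper then proceeds as in your first route: via $\textup{RI}_\si^\circ=\textup{KYP}_\si^\circ=\sG\sK_{\si,\textup{core}}^{\textup{min}}$ and the final part of item (ii) of Proposition 5.8 in \cite{AKP97}, minimality of $H_\circ$ gives that $\si_{H_\circ}$ is a minimal \emph{optimal} passive system, and then Lemma \ref{lem:minopt} states that every minimal optimal passive system satisfies \eqref{eq:minmatsi1}.

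The genuine gap in your proposal is precisely that last ingredient: you assert, but do not prove, that for the distinguished realization attached to the minimal $H_\circ$ the contraction $\ga$ is a partial isometry with initial space $\overline{\im\,\a}$ (equivalently, that the Schur complement vanishes). This is not something you can simply read off from Theorem 5.1 of \cite{AKP06}, which characterizes the minimal element in the ordering but says nothing about the defect of $M(\si_{H_\circ})$; it is the content of Lemma \ref{lem:minopt}, whose proof occupies most of Section \ref{sec:finalthm2.5} and rests on the restricted-shift (de Branges--Rovnyak) functional model of the minimal optimal realization, with an explicit supremum computation in the $\sH(\th)$ norm (alternatively one can quote Arlinski\u{\i}'s Corollary 7.3(1) in \cite{Arl08}, based on Kre\v{\i}n's shorted operators). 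Your description of the minimal element as corresponding to the ``observable co-isometric'' realization is also off: the optimal minimal realization is a restriction of the co-isometric model and its system matrix need not be co-isometric, so co-isometry cannot be the source of the vanishing defect. Your alternative ``perturbation'' route is likewise unsubstantiated: it is not explained how a nonzero Schur complement would produce an element of $\textup{RI}_\si^\circ$ strictly below $H_\circ$ in the $\prec$ ordering, and verifying conditions (a), (b) and the core property for such a modified (possibly unbounded) operator is exactly the kind of step that cannot be waved through. So the skeleton matches the paper, but the analytic heart of the argument is missing.
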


For the proof of the above proposition we  need  Lemma \ref{lem:minopt} below which is an addition to \cite[Theorem~5.1]{AKP97}. Recall (cf.,  Section 2 of \cite{AKP06}) that  a discrete time linear system  $\si$ is called a \emph{realization} of a Schur class function $\th$ whenever  the transfer function of $\si$  coincides with $\th$ in a neighborhood of zero. For the definition of an optimal passive system we refer to Section~3 in  \cite{AKP97}.

\begin{lem}\label{lem:minopt} Let $\si=(A,B,C,D, \sX, \sU, \sY)$  be a minimal and optimal passive  discrete time linear system.  Then
\begin{equation}\label{eq:minmatsi1}
\inf \left\{\|\begin{bmatrix} x\\u\end{bmatrix}\|^2-
 \|M(\si)\begin{bmatrix} x\\u\end{bmatrix} \|^2 \mid  u\in \sU \right\}= 0 \quad (x\in \sX).
\end{equation}
\end{lem}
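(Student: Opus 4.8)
The plan is to reduce \eqref{eq:minmatsi1} to the vanishing of a Schur complement and then bring in optimality. Observe first that \eqref{eq:minmatsi1} is precisely condition \eqref{infsystM0} of Theorem~\ref{contracMRI} in the special case $H=I_\sX$: indeed $I_\sX\in\textup{RI}_\si$ for every passive $\si$ (conditions (C1) and (C2) are trivial, while (C3) and (CI4) for $H=I_\sX$ reduce, via Proposition~\ref{prop:A1}, to the contractivity of $M(\si)$), and then $\si_{I_\sX}=\si$, so that $M(\si_{I_\sX})=M(\si)$. Hence, by Theorem~\ref{contracMRI} together with Proposition~\ref{prop:A2}, the identity \eqref{eq:minmatsi1} holds if and only if the Schur complement of $R:=I_{\sX\oplus\sU}-M(\si)^*M(\si)$ supported by $\sX$ is the zero operator; equivalently, $I_\sX\in\textup{RE}_\si$. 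It therefore suffices to show that this Schur complement vanishes.

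Write $R=\begin{bmatrix}\a&\b\\ \b^*&\d\end{bmatrix}$ with $\a=I_\sX-A^*A-C^*C$, $\b=-A^*B-C^*D$ and $\d=I_\sU-B^*B-D^*D$, exactly as in Part~1 of the proof of Theorem~\ref{contracMRI}. Applying Proposition~\ref{prop:A1} to $R\geq0$ produces $\a,\d\geq0$ and a contraction $\ga\colon\sX\to\sU$ with $\kr\ga\supset\kr\a$, $\im\ga\subset\overline{\im\d}$ and $\b^*=\d^{1/2}\ga\a^{1/2}$, and the Schur complement in question equals $Z:=\a^{1/2}(I_\sX-\ga^*\ga)\a^{1/2}$. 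So the goal reduces to showing that $\ga$ acts isometrically on $\overline{\im\a^{1/2}}$, equivalently that the Riccati inequality \eqref{ricineq} for $H=I_\sX$ is in fact an equality.

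Everything up to this point is a routine unwinding of the results of Sections~\ref{sec:Hpass}--\ref{sec:KYP}; the core of the argument is to deduce this equality from the minimality and optimality of $\si$, and here \cite[Theorem~5.1]{AKP97} enters. I would argue by contradiction: suppose $Z\neq0$, and set $H_1:=I_\sX-Z$. Since $\si$ is minimal, $\kr(C|A)=\{0\}$, so $\langle\a x,x\rangle=\|x\|^2-\|Ax\|^2-\|Cx\|^2<\|x\|^2$ for every $x\neq0$; as $0\leq Z\leq\a$ this gives $H_1>0$, while $H_1\neq I_\sX$. The real content is to verify that $H_1\in\textup{RI}_\si$: conditions (C1) and (C2) are immediate, (C3) holds because $\d_\si(H_1)=I_\sU-D^*D-B^*H_1B\geq\d\geq0$, and (CI4) for $H_1$ is expected to hold since subtracting the Schur complement $Z$ removes exactly the slack in \eqref{ricineq}. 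Granting $H_1\in\textup{RI}_\si$, Theorem~\ref{contracMRI} shows $\si_{H_1}$ is passive, and the relations \eqref{defAH}--\eqref{defBH} give $C_{H_1}A_{H_1}^nB_{H_1}=C_{H_1}H_1^{1/2}A^nB=CA^nB$ for all $n\geq0$, so $\si_{H_1}$ realizes $\th_\si$. Finally, if $\xi=\sum_{k=0}^{n-1}A^{n-1-k}Bu_k$ is the state of $\si$ reached from the zero initial state by $u_0,\dots,u_{n-1}$, then the state of $\si_{H_1}$ reached from the same inputs is $H_1^{1/2}\xi$, whose squared norm equals $\langle H_1\xi,\xi\rangle=\|\xi\|^2-\langle Z\xi,\xi\rangle$; since the reachable states $\im(A|B)$ are dense and $Z\neq0$, one can choose the inputs so that $\langle Z\xi,\xi\rangle>0$, and then $\si_{H_1}$ has a reachable state of strictly smaller norm than the corresponding state of $\si$, contradicting optimality.

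The step I expect to be the main obstacle is exactly the verification that $H_1=I_\sX-Z$ (or a suitable modification of it) indeed lies in $\textup{RI}_\si$ — in particular establishing condition (CI4) for $H_1$ — and this is presumably where the structural description of minimal optimal passive realizations in \cite[Theorem~5.1]{AKP97} becomes indispensable. Once that is settled, the reduction carried out in the first two paragraphs completes the proof.
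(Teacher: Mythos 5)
Your reduction of \eqref{eq:minmatsi1} to the vanishing of the Schur complement $Z=\a^{1/2}(I-\ga^*\ga)\a^{1/2}$ of $R=I_{\sX\oplus\sU}-M(\si)^*M(\si)$ (equivalently, to $I_\sX\in\textup{RE}_\si$) is correct, and so is the final optimality argument \emph{granted} that $H_1=I_\sX-Z$ lies in $\textup{RI}_\si$. But that membership is exactly the load-bearing step, and you leave it unproved: ``(CI4) for $H_1$ is expected to hold since subtracting the Schur complement removes exactly the slack'' is not an argument, and since everything else in your proposal is, by your own account, routine unwinding, this is a genuine gap. Your guess that \cite[Theorem 5.1]{AKP97} is what fills it is also off target; that theorem supplies the de Branges--Rovnyak restricted-shift model, which is what the paper uses for an entirely different proof (see below). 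The gap can, however, be closed, and more cheaply than you anticipate: instead of attacking (CI4) directly, use Theorem \ref{thm:RIKYP} and verify the KYP inequality for $I_\sX-Z$. From \eqref{defKsi} one gets, for $x\in\sX$ and $u\in\sU$,
\begin{equation*}
K_\si(I_\sX-Z)\begin{bmatrix}x\\u\end{bmatrix}
=\lg R\begin{bmatrix}x\\u\end{bmatrix},\begin{bmatrix}x\\u\end{bmatrix}\rg-\lg Zx,x\rg+\lg Z(Ax+Bu),Ax+Bu\rg,
\end{equation*}
and Proposition \ref{prop:A2} (formula \eqref{defaltDe}) gives $\lg R\begin{bmatrix}x\\u\end{bmatrix},\begin{bmatrix}x\\u\end{bmatrix}\rg\geq\lg Zx,x\rg$ for every $u$, while $Z\geq0$ makes the last term nonnegative; hence $K_\si(I_\sX-Z)\geq0$ and $I_\sX-Z\in\textup{KYP}_\si=\textup{RI}_\si$. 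With this inserted, your contradiction with optimality (reachable states of $\si_{H_1}$ are $H_1^{1/2}\xi$ with $\|H_1^{1/2}\xi\|^2=\|\xi\|^2-\lg Z\xi,\xi\rg<\|\xi\|^2$ for suitable $\xi\in\im(A|B)$) does go through.

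Be aware that, once repaired, your route is genuinely different from the paper's. The paper invokes the unitary equivalence of minimal optimal realizations (\cite[Theorem 3.2]{AKP97}) to replace $\si$ by the restricted-shift model $\si_\circ$ on the closure of the Hankel range in $\sH(\th)$, and then obtains \eqref{eq:minmatsi1} by a direct computation with the supremum defining the $\sH(\th)$-norm in \eqref{defnormHth}; no Schur complements or Riccati machinery enter. Your argument stays entirely inside the operator-theoretic framework of Sections \ref{sec:Hpass}--\ref{sec:KYP} and the Appendix, at the price of needing the characterization of optimality by minimality of reachable-state norms; the paper's proof instead pays the price of setting up the functional model but then reads off the identity almost immediately.
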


\smallskip
The above lenma has been established in item (1) of
\cite[Corollary 7.3]{Arl08} using   results of M.~G.~ Kre\v{\i}n
on shorted operators; cf., the final paragraph of  the appendix
(Section \ref{App}). In the present paper we give a proof based on
the functional model of minimal passive optimal systems derived in
\cite{AKP06}.
\begin{proof}   Let $\si=(A,B,C,D, \sX, \sU, \sY)$  be a minimal and
optimal, and let $\th_\si$ be its transfer function.  Since $\si$
is passive, $\th_\si$ belongs to the Schur class $\sS(\sU, \sY)$,
that is, $\th_\si$ is analytic on the open unit disc $\BD$ and
$\|\th_\si(z)\|\leq 1$ for all $z \in \BD$.  This allows us to
replace $\si$ by its  restricted shift model. Indeed, let
$\th=\th_\si$, and let  $\si_\circ=(A_\circ,B_\circ,C_\circ,D,
\sX_\circ, \sU, \sY)$ be the minimal and optimal realization of
$\th$ given by Theorem 5.1 in \cite{AKP97}. Then $\si$ and
$\si_\circ$ are unitary equivalent  by Theorem 3.2 in
\cite{AKP97}, and hence it suffices to prove Lemma
\ref{lem:minopt} for $\si_\circ$ in place of $\si$.

Let us recall the construction of  $\si_\circ$ given in the paragraph preceding Theorem 5.1 in \cite{AKP97}. For this purpose we need the de Branges-Rovnyak space $\sH(\th):=\{f\in H^2(\sY)\mid \|f\|_{\sH(\th)}<\iy\}$, where $H^2(\sY)$ is the standard Hardy spaces of $\sY$-valued functions on the open unit disc $\BD$ with square summable Taylor coefficients and
\begin{equation}
\label{defnormHth}
\|f\|_{\sH(\th)}^2=\sup \{\|f+\th \eta\|_{H^2(\sY)}^2-\|\eta\|_{H^2(\sU)}^2\mid \eta\in H^2(\sU)\}.
\end{equation}
Let us list a few properties  (see, e.g.,   \cite[Chapter 2]{Ando90} and \cite[Section 2]{BB15})  of  the space $\sH(\th)$:
\begin{itemize}
\item[\textup{(a)}]  the space $\sH(\th)$ a Hilbert space with the Hilbert space norm  $\|\cdot\|_{\sH(\th)}$  being given by \eqref{defnormHth} and $\sH(\th)$ is contractively embedded in $H^2(\sY)$;
\item[\textup{(b)}]  the space $\sH(\th)$ is invariant under the backward-shift operator on $H^2(\sY)$, that is, if $f\in \sH(\th)$, then the function $\tilde{f}$, $\tilde{f}(z)=z^{-1}\left(f(z)-f(0)\right)$, also belongs to $\sH(\th)$;
\item[\textup{(c)}]  for each $u\in \sU$ the function $\tilde{\th}(\cdot)u$, where $\tilde{\th}(z)=z^{-1}\left(\th(z)-\th(0)\right)$, belongs to $\sH(\th)$.
\end{itemize}
Furthermore, we need  the Hankel operator  $G_\th$ mapping $K^2(\sU)$ into $H^2(\sU)$. Here  $$K^2(\sU)=L^2(U)\ominus H^2(\sU),$$ and   for any separable Hilbert space $\sF$  we denote by $L^2(\sF)$  the Hilbert space of measurable  $\sF$-valued functions  $f$ on  the unit circle $\BT$   such that $\|f(\cdot)\|^2$ is Lebesgue integrable on  $\BT$,  and with the norm on $L^2(\sF)$ being defined  by
\[
\|f\|^2=\frac{1}{2\pi}\int_0^{2\pi} \|f(e^{it})\|^2\,dt.
\]
The action of  $G_\th$ is given by
\[
G_\th f= P_{H^2(\sY)}\th f, \quad  f\in K^2(\sU),
\]
where $P_{H^2(\sY)}$ is the orthogonal projection of $L^2(Y)$ onto $H^2(\sY)$.  Using the norm \eqref{defnormHth} and  items (b) and  (c)   above it follows (see, e.g., \cite[Lemma 5.2]{AKP97})  that the range of the Hankel operator  $G_\th$ is contained in the model space  $\sH(\th)$.

We are now ready to define the system $\si_\circ$. By definition, the state space $\sX_\circ$ is the closure of $\im G_\th$ in  $\sH(\th)$ and
\[
\begin{array}{lll}
A_\circ: \sX_\circ\to \sX_\circ,\quad  &(A_\circ x)(z)=z^{-1}(x(z)-x(0))\quad (x\in \sX_\circ);\\
B_\circ: \sU \to \sX_\circ, \quad  & (B_\circ u)(z)=z^{-1}(\th(z)-\th(0))u\quad (u\in \sU);\\
C_\circ: \sX_\circ\to \sY , \quad  &C_\circ x=x(0)\quad (x\in \sX_\circ);\\
D:\sU\to \sY, \quad  &Du=\th(0)u \quad (u\in \sU).
\end{array}
\]
These  operators are all well defined, and $\si_\circ=(A_\circ,B_\circ,C_\circ,D, \sX_\circ, \sU, \sY)$ is the minimal and optimal realization of  $\th$ given by Theorem 5.1 in \cite{AKP97}.

Now let us prove Lemma \ref{lem:minopt} with $\si_\circ$ in place of $\si$. Let  $\eta\in H^2(\sU)$. We decompose $\eta$ as  $\eta(z)=u+z \tilde{\eta}(z)$, where  $u=\eta(0)$ and $\tilde{\eta}(z)=z^{-1}\left(\eta(z)-\eta (0)\right)$. Note that the constant function $u$ and the function $z\tilde{\eta}(z)$ are perpendicular in $H^2(\sU)$, and thus
\begin{equation}\label{normeta}
\|\eta\|_{H^2(\sU)}^2=\|u\|^2+\|\tilde{\eta}\|_{H^2(\sU)}^2.
\end{equation}
Next observe that
\begin{align*}
(x+\th \eta)(z)  =x(0)+\th(0)\eta(0)&  +\big(x(z)-x(0)\big)
\\
&
+\big(\th(z)-\th(0)\big)\eta(0)+\th(z)\big(\eta(z)-\eta(0)\big).
\end{align*}
Furthermore, using $\eta(z)=u+z \tilde{\eta}(z)$ and the  definitions of the operators $A_\circ,B_\circ,C_\circ, D$ given above we see that
\begin{align*}
x(0)+\th(0)\eta(0)&=C_\circ x+Du,\quad x(z)-x(0) =z(A_\circ x)(z),\\[.1cm]
\big(\th(z)-\th(0)\big)\eta(0)&=z(B_\circ u)(z),\\[.1cm]
\th(z)\big(\eta(z)-\eta(0)\big)&=z(\th\tilde{\eta})(z), \quad z\in \BD.
\end{align*}

It follows  that
\begin{equation}
\label{normxthteta}
\|x+\th \eta\|_{H^2(\sY)}^2=\|C_\circ x+ Du\|^2+ \|A_\circ x  +B_\circ u  +\th\tilde{\eta}\|_{H^2(\sY)}^2.
\end{equation}
Using the identities \eqref{normeta} and \eqref{normxthteta} we see that
\begin{align*}
\|x+\th \eta\|_{H^2(\sY)}^2 & -\|\eta\|_{H^2(\sU)}^2
\\
& = \left( \|C_\circ x+ Du\|^2-\|u\|^2\right)+\left( \|A_\circ x
+B_\circ u
+\th\tilde{\eta}\|_{H^2(\sY)}^2-\|\tilde{\eta}\|_{H^2(\sU)}^2\right).
\end{align*}
But then, using the definition of the norm $\|\cdot \|_{\sH(\th)}$ in \eqref{defnormHth}, we obtain
\begin{align*}
\|x\|_{\sH(\th)}^2 &=\sup \Big{\{} \left( \|C_\circ x+
Du\|^2-\|u\|^2\right)
\\
&
\quad +\left( \|A_\circ x +B_\circ u
+\th\tilde{\eta}\|_{H^2(\sY)}^2-\|\tilde{\eta}\|_{H^2(\sU)}^2\right)
{\mid} u\in \sU, \ \tilde{\eta}\in H^2(\sU)  \Big{\}}
\\
&=\sup\Big{\{} \|C_\circ x+ Du\|^2+ \|A_\circ x  +B_\circ
u\|_{\sH(\th)}^2-\|u\|^2\mid u\in \sU\Big{\}}.
\end{align*}
We conclude that
\[
\inf \Big{\{}\|x\|_{\sH(\th)}^2+\|u\|^2-  \|M(\si_\circ)\begin{bmatrix} x\\u\end{bmatrix} \|_{\sH(\th)\oplus\, \sU}^2 \mid u\in \sU\Big{\}}=0.
\]
This proves the lemma for $\si_\circ$, and hence we are done.
\end{proof}

\medskip
\noindent\textit{Proof of  Proposition \ref{prop:minelt}}.  Let
$H_\circ$ be a minimal element  in  $\textup{RI}_\si^\circ$ with
respect to the usual ordering of nonnegative operators. It
suffices to show that $H_\circ\in \textup{RE}_\si$. Recall that
$\textup{RI}_\si^\circ=\textup{KYP}_\si^\circ$ by Corollary
\ref{cor:RIKYPcirc},  and that $\textup{KYP}_\si^\circ$ coincides
with the set $\sG\sK_{\si, \textup{core}}^{\textup{min}}$ used in
Section 5 of \cite{AKP06}; see the paragraph before Corollary
\ref{cor:RIKYPcirc}. These facts allow us to use  the final part
of item (ii) in \cite[Propositon 5.8]{AKP97}. It follows  that
$\si_{H_\circ}$ is a minimal and optimal  passive system. But then
we know from Lemma~\ref{lem:minopt} that equation
\eqref{eq:minmatsi1} holds with $\si_{H_\circ}$ in place of $\si$,
and  we can apply Proposition \ref{contracMRI} to conclude that
$H_\circ\in \textup{RE}_\si$.  {\hfill $\Box$}

\medskip The equalities  $RI_\si=KYP_\si$ and  $RI_\si^\circ=KYP_\si^\circ$, proved in Section  \ref{sec:KYP}, Theorem \ref{thm:RIKYP} and Corollary  \ref{cor:RIKYPcirc}, allow us to extend results proved in   Section  4 of \cite{AKP06} to   the setting considered in the present paper.  Among other things this provides  the following addition  to Theorem \ref{thm2b} for the  adjoint system.

\begin{thm}\label{thm:Ric2}
Let $\si=(A, B, C,  D; \sX, \sU, \sY)$  be a  discrete time-invariant system, and let $\si^* =(A^*, C^*, B^*, D^*; \sX, \sY, \sU)$ be its adjoint system. Then the transfer function of  $\si^* $  is given by $\th_{\si^*}=\th_\si^\sim$,  where $\th_\si^\sim (\l)=\th_\si(\bar{\l})^*$, and  $\si$ is minimal if and only if  $\si^*$ is minimal. Furthermore,  assuming $\si$ is minimal and  $\textup{RI}_\si$ is non-empty, we have
\begin{equation}
\label{sisi*}
\textup{RI}_{\si^*} ^\circ=\{H^{-1}\mid H \in  \textup{RI}_{\si}^\circ\}.
\end{equation}
Finally, if $H_\circ$ and $H_\bu$  are the minimal and maximal elements in $\textup{RI}_{\si} ^\circ$, then $H_\bu^{-1}$ and $H_\circ^{-1}$  are the minimal and maximal elements in $\textup{RI}_{\si^*} ^\circ$
\end{thm}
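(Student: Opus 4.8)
The plan is to reduce everything to the duality relation $\textup{RI}_{\si^*}=\textup{KYP}_{\si^*}$, which by Theorem~\ref{thm:RIKYP} applied to $\si^*$ lets us invoke the KYP machinery of \cite{AKP06}. First I would establish the two elementary facts. The identity $\th_{\si^*}=\th_\si^\sim$ is a direct computation: from $\th_\si(\l)=D+\l C(I-\l A)^{-1}B$ we get $\th_\si(\bar\l)^*=D^*+\l B^*(I-\l A^*)^{-1}C^*$, which is exactly the transfer function of $(A^*,C^*,B^*,D^*;\sX,\sY,\sU)$. For the minimality claim, recall that $\si$ is minimal iff $\im(A|B)$ is dense and $\kr(C|A)=\{0\}$; since $\overline{\im(A^*|C^*)}=\sX\ominus\kr(C|A)$ and $\kr(B^*|A^*)=\sX\ominus\overline{\im(A|B)}$, controllability of $\si$ is observability of $\si^*$ and vice versa, so $\si$ is minimal iff $\si^*$ is.

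Next I would prove the core identity \eqref{sisi*}. The key link is Lemma~\ref{lem:minsysH}: if $\si$ is minimal and $H\in\textup{RI}_\si$, then $H^{1/2}$ is a pseudo-similarity from $\si$ to $\si_H$ and $H^{-1/2}$ is a pseudo-similarity from $\si^*$ to $(\si_H)^*$. A short computation with \eqref{adjointsyst} and \eqref{adjointsystH} shows that $(\si_H)^*=(\si^*)_{H^{-1}}$ — that is, forming the $H$-passified system and then taking the adjoint is the same as taking the adjoint and then passifying with $H^{-1}$; this rests on the formulas $A_H^*H^{-1/2}=H^{-1/2}A^*$ and $H^{-1/2}C^*=C_H^*$ derived in the proof of Lemma~\ref{lem:minsysH}, which are precisely the defining relations \eqref{defAH}, \eqref{defBH}, \eqref{defCH} for the adjoint system. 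Now $H\in\textup{RI}_\si^\circ=\textup{KYP}_\si^\circ$ means (Corollary~\ref{cor:RIKYPcirc} and the paragraph preceding it) that $\si_H$ is a minimal passive system with $\im(A|B)$ a core for $H^{1/2}$; passing to adjoints, $(\si^*)_{H^{-1}}=(\si_H)^*$ is a minimal passive system, and the core condition (b) transfers by the symmetry of the core notion under $H\leftrightarrow H^{-1}$ together with the density conditions in (a). Hence $H^{-1}\in\textup{KYP}_{\si^*}^\circ=\textup{RI}_{\si^*}^\circ$. The reverse inclusion is the same argument applied to $\si^*$, using $(\si^*)^*=\si$ and $(H^{-1})^{-1}=H$, which gives the set equality \eqref{sisi*}. (Non-emptiness of $\textup{RI}_{\si^*}^\circ$ follows from Theorem~\ref{thm2b} applied to $\si^*$, since $\th_{\si^*}=\th_\si^\sim$ is Schur iff $\th_\si$ is.)

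Finally, the statement about minimal and maximal elements follows by monotonicity of inversion. If $H_\circ\prec H$ for all $H\in\textup{RI}_\si^\circ$, then — using the standard fact that for nonnegative injective selfadjoint operators $H_1\prec H_2$ implies $H_2^{-1}\prec H_1^{-1}$, valid for the ordering $\prec$ defined just after Theorem~\ref{thm1b} — we get $K^{-1}\prec H_\circ^{-1}$ for every $K\in\textup{RI}_\si^\circ$, i.e. $H_\circ^{-1}$ dominates every element of $\textup{RI}_{\si^*}^\circ$ by \eqref{sisi*}, so $H_\circ^{-1}=H_\bu$, the maximal element of $\textup{RI}_{\si^*}^\circ$; symmetrically $H_\bu^{-1}$ is the minimal element. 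The main obstacle I anticipate is the careful verification that the core condition (b) and the density conditions (a) are genuinely symmetric under $H\leftrightarrow H^{-1}$ — one must check that $\im(A|B)$ being a core for $H^{1/2}$ together with $H^{1/2}\im(A|B)$ and $H^{-1/2}\im(A^*|C^*)$ being dense is equivalent to the analogous package for $\si^*$ and $H^{-1}$; this is essentially the content of the "core" bookkeeping in Section~5 of \cite{AKP06} and should be citable, but it is the one place where the argument is not purely formal.
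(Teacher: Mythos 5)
Your proposal is correct and follows essentially the same route as the paper: the paper gives no detailed proof of Theorem~\ref{thm:Ric2}, but obtains it precisely by transferring the adjoint-system duality results for KYP solutions from Section 4 (and the core bookkeeping of Section 5) of \cite{AKP06} via the equalities $\textup{RI}_\si=\textup{KYP}_\si$ and $\textup{RI}_\si^\circ=\textup{KYP}_\si^\circ$ of Theorem~\ref{thm:RIKYP} and Corollary~\ref{cor:RIKYPcirc}, which is exactly your reduction (with the identification $(\si_H)^*=(\si^*)_{H^{-1}}$ and the order-reversal of inversion filled in explicitly). The one step you flag as non-formal, the symmetry of the core condition under $H\leftrightarrow H^{-1}$, is indeed handled in the paper only by the citation to \cite{AKP06}, so your treatment matches the paper's.
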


The analogue of  \eqref{sisi*} for the Riccati equality in place of the Riccati inequality, i.e.,  with $\textup{RI}$ replaced by $\textup{RE}$, does not hold. See \eqref{RicE4} in the final paragraph of Example \ref{ex:*optimal}.

\setcounter{equation}{0}
\section{A criterion for uniqueness  and inner functions}\label{sec:uniquesol}

Let  $\si$ be a minimal realization of  an inner function $\th$. In this section we show that in that case   $\textup{RI}_\si^\circ$ consists of  a single element, $H_\circ$ say, and  we prove that  $\d_\si(H_\circ)=0$.  Since $\textup{RI}_\si^\circ$ is equal to the set $\sG\sK_{\si, \textup{core}}^\textup{min}$  appearing in  \cite{AKP06}, we shall show that the  first statement can be obtained as  a corollary  of  the final part of Theorem 7.1 in \cite{AKP06}.  The second statement is proved in the second  part of this section.

Let us recall  the  final part of   \cite[Theorem 7.1]{AKP06}.   This  requires some preliminaries, which we take from \cite[pages 164, 165]{ArovNu00} with some minor changes.  Let $\th$ be an arbitrary function in $\sS(\sU, \sY)$, not necessarily inner.  It is known  \cite[Section V.4]{Sz-NFBK10} that there exist a Hilbert space $\sF_r\subset \sU$ and a  function $\va_r\in \sS(\sU, \sF_r)$ with the following  three  properties:
\begin{itemize}
\item[\textup{(a)}] $\va_r(z)^*\va_r(z)\leq I_\sU-\th(z)^*\th(z)$ for each $z\in \BD$;
\item[\textup{(b)}] for any  Schur class function $\va\in \sS(\sU, \sG)$, where $\sG$ is a Hilbert space, such that   $\va(z)^*\va(z)\leq I_\sU-\th(z)^*\th(z)$   for  each $z\in \BD$, we have
\[
 \va(z)^*\va(z)\leq \va_r(z)^*\va_r(z) \quad  {\text{for each}}  \quad   z\in
 \BD;
 \]
 \item [\textup{(c)}]  $\overline{\im \va_r(0) }= \sF_r$.
\end{itemize}
Here,  the inequalities are understood in the sense of  bounded selfadjoint operators on Hilbert  spaces. The function $\va_r$ can be normalized by the condition $\va_r(0)|_{\sF_r} $ is positive. With this additional normalization, the function $\va_r$ is uniquely defined  (see \cite{Sz-NFBK10}). From  \cite{Sz-NFBK10}  we also know that    properties  (a), (b), (c)  imply that  the function  $\va_r(z)$ is outer.

In a similar way one defines a maximal factor  $\va_l$ from the left. Indeed,  there exist a Hilbert space $\sF_l\subset \sY$ and a  function $\va_l\in \sS( \sF_l, \sY)$ with the following  three  properties:
\begin{itemize}
\item[\textup{(a')}] $\va_l(z)\va_l(z)^*\leq I_\sY-\th(z)\th(z)^*$ for each  $z\in \BD$;
\item[\textup{(b')}] for any  Schur class function $\psi\in \sS(\sG^\prime, \sY)$, where $\sG^\prime$ is a Hilbert space, such that   $\psi(z)\psi(z)^*\leq I_\sY-\th(z)\th(z)^*$ for each  $z\in \BD$, we have
\[
\psi(z)\psi(z)^*\leq  I_\sY-\th(z)\th(z) ^* \quad {\text{for
each}} \quad  z\in \BD;
\]
\item[\textup{(c')}]  $\overline{\im \va_l(0)^*}=\sF_l$.
\end{itemize}
In this case the function   $\va_l(\bar{z})^*$ is an  outer function, and normalization is obtained by requiring $\va_l(0)^*|_{\sF_l} $ to be a positive operator.

The functions  $\va_r$ and $\va_l$ are called  the \emph{right and
left  defect functions} of $\th$; see \cite[page~213]{ArlHdeS07}
and the references given therein.

Given $\th\in \sS(\sU, \sY)$ and the   defect functions $\va_r\in \sS(\sU, \sF_r)$ and $\va_l\in \sS(\sF_l, \sY)$, we know from \cite{BD97} that there exists a function $h_0$ in the space $L^\iy(\sF_l, \sF_r)$  of bounded measurable operator-valued  functions defined on the unit circle with  values in $\sL(\sF_l, \sF_r)$ such that the block operator matrix
\begin{equation}
\label{defh0}
\Theta(\z)=\begin{bmatrix}\va_l(\z) & \th(\z)\\  h_0(\z) & \va_r(\z)\end{bmatrix}:
\begin{bmatrix}\sF_l\\ \sU \end{bmatrix} \to  \begin{bmatrix}\sY\\ \sF_r \end{bmatrix}
\end{equation}
is contractive almost everywhere for  $\z \in \BT$. Moreover, according to \cite{BD97}, the operator function $h_0$  defined above is unique. We call $h_0$ the \emph{coupling function} defined  by $\th$.

Let $\th\in \sS(\sU, \sY)$, and let $h_0$ be  the coupling
function defined above.  Using \cite[Theorem~1.1]{ArovNu00} and
$\textup{RI}_\si^\circ=\sG\sK_{\si, \textup{core}}^\textup{min}$,
the final part of Theorem~7.1 in \cite{AKP06} yields the following
theorem.

\begin{thm}\label{single1} Let $\th\in \sS(\sU, \sY)$, and  let  $\si$ be  a minimal realization of  $\th$.   Then $\textup{RI}_\si^\circ$ consists of a single element if and only if  the following condition is satisfied:
\begin{itemize}
\item[\textup{(C)}] the coupling function $h_0$ defined by  $\th$   is the boundary value of a function from the Schur class $\sS(\sF_l, \sF_r)$.
\end{itemize}
Here $\sF_l$ and $ \sF_r$ are the Hilbert space appearing in \eqref{defh0}.
\end{thm}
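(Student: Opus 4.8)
\textbf{Proof proposal for Theorem \ref{single1}.}
The plan is to translate the question about $\textup{RI}_\si^\circ$ into a question about the set $\sG\sK_{\si, \textup{core}}^\textup{min}$ studied in \cite{AKP06}, and then to quote the relevant results verbatim. By Corollary \ref{cor:RIKYPcirc} we have $\textup{RI}_\si^\circ=\textup{KYP}_\si^\circ$, and in the paragraph preceding that corollary we observed that $\textup{KYP}_\si^\circ$ coincides with the set denoted $\sG\sK_{\si, \textup{core}}^\textup{min}$ in \cite[formulas (5.1) and (5.2)]{AKP06}. Hence the assertion of the theorem is equivalent to the statement that $\sG\sK_{\si, \textup{core}}^\textup{min}$ is a singleton precisely when condition (C) holds. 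This reduction is purely bookkeeping and carries no analytic content; it is the first step.

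Next I would invoke the structural description of $\sG\sK_{\si, \textup{core}}^\textup{min}$ furnished by the final part of \cite[Theorem~7.1]{AKP06}. That theorem parametrizes the elements of $\sG\sK_{\si, \textup{core}}^\textup{min}$ for a minimal realization $\si$ of a Schur class function $\th$ in terms of the choices entering the Arov--Nudelman coupling construction: concretely, via the defect functions $\va_r$, $\va_l$ of $\th$ and the coupling function $h_0$ of \eqref{defh0}. The content we need is the criterion, extracted there from \cite[Theorem~1.1]{ArovNu00}, that the parametrizing set degenerates to a single point exactly when the "free parameter'' it depends on is forced to be trivial, and that this forcing happens if and only if $h_0$ already lies in the Schur class $\sS(\sF_l, \sF_r)$ (rather than merely being an $L^\iy$ contraction). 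Spelling out this equivalence — i.e., matching the hypotheses of \cite[Theorem~7.1]{AKP06} to the present setting and identifying its singleton condition with (C) — is the second and final step of the argument.

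The main obstacle is not in the logic of the reduction, which is immediate given the cited results, but in making sure the two external statements are applied under exactly the right hypotheses: one must check that $\si$ being merely a minimal realization of $\th$ (with $\th$ only assumed Schur, not inner) is enough to trigger the final part of \cite[Theorem~7.1]{AKP06}, and that the normalizations imposed on $\va_r$, $\va_l$ and the uniqueness of $h_0$ guaranteed by \cite{BD97} are compatible with the normalizations used in \cite{ArovNu00}. Once those compatibility checks are in place, the equivalence "$\textup{RI}_\si^\circ$ is a singleton $\iff$ (C)'' follows by combining Corollary \ref{cor:RIKYPcirc}, the identification $\textup{KYP}_\si^\circ=\sG\sK_{\si,\textup{core}}^\textup{min}$, and \cite[Theorem~7.1]{AKP06} together with \cite[Theorem~1.1]{ArovNu00}, with no further computation required.
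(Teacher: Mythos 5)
Your proposal follows exactly the route the paper takes: identify $\textup{RI}_\si^\circ$ with $\textup{KYP}_\si^\circ=\sG\sK_{\si,\textup{core}}^{\textup{min}}$ via Corollary \ref{cor:RIKYPcirc}, and then quote the final part of Theorem~7.1 in \cite{AKP06} together with Theorem~1.1 of \cite{ArovNu00}, whose item (iii) is precisely condition (C). The paper itself presents the theorem as such a corollary, so your argument is correct and essentially identical to the paper's.
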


Condition (C) above is item (iii) in \cite[Theorem 1.1]{ArovNu00}.  Note that condition (C) does not depend on the particular choice of the minimal system $\si$.  As the  proof of Theorem 7.1 in \cite{AKP06} shows,  Theorem \ref{single1} above  can be viewed as a corollary of  the equivalence of items (i) and (iii) in  \cite[Theorem 1.1]{ArovNu00}.

\begin{rem}\label{zerocoupl} If one of the spaces $\sF_l$ and $ \sF_r$ consists of the zero vector only, then the  coupling  function $h_0$ defined by $\th$   is zero. Hence  condition \textup{(C)}  is trivially satisfied and, by Theorem  \ref{single1},  the set $\textup{RI}_\si^\circ$ consists of  one element only for any minimal realization of~$\th$.
\end{rem}

\begin{cor}\label{scalaruniq} Let $\th$ be a scalar Schur class function. Then the  defect functions $\va_r$ and $\va_l$ coincide. Furthermore, $\va_r=\va_l=0$ if and only if  the function $\log(1-|\th(\cdot)|)$ is not Lebesgue integrable on the unit circle, and in that case the set $\textup{RI}_\si^\circ$ consists of  one element only for any minimal realization of $\th$.
\end{cor}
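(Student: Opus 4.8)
The plan is to treat the three assertions of the corollary in turn, using throughout that in the scalar case $\sU=\sY=\BC$ any subspace $\sF_r\subset\sU$ or $\sF_l\subset\sY$ is either $\{0\}$ or $\BC$. For the first assertion ($\va_r=\va_l$) I would simply unwind the defining conditions: when $\sU=\sY=\BC$ the positivity conditions (a) and (a$'$) both become $|\va(z)|^2\le1-|\th(z)|^2$ for all $z\in\BD$; the maximality conditions (b) and (b$'$) both say that $|\va(z)|$ dominates, at every point of $\BD$, the modulus of every competing Schur-class function obeying the same bound; the normalizations ``$\va_r(0)|_{\sF_r}$ positive'' and ``$\va_l(0)^*|_{\sF_l}$ positive'' both reduce to ``the value at the origin is a nonnegative real number''; and (c), (c$'$) then force $\sF_r$ and $\sF_l$ to equal $\BC$ when that value is nonzero and $\{0\}$ otherwise. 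Thus $\va_r$ and $\va_l$ are characterized by the very same conditions, and the uniqueness asserted in the cited result of \cite{Sz-NFBK10} gives $\va_r=\va_l$.

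For the second assertion, note first that $1-|\th(e^{it})|^2=(1-|\th(e^{it})|)(1+|\th(e^{it})|)$ with $0\le|\th|\le1$ a.e., so $\log(1+|\th(\cdot)|)$ is bounded and the stated condition is equivalent to $\log(1-|\th(\cdot)|^2)\notin L^1(\BT)$; since $\va_r=\va_l$ it then suffices to show that $\va_r=0$ if and only if $\log(1-|\th|^2)\notin L^1(\BT)$. If $\log(1-|\th|^2)\in L^1$, let $\phi$ be the outer function with $|\phi(e^{it})|^2=1-|\th(e^{it})|^2$ a.e.\ and $\phi(0)>0$; I would check that $\phi$ satisfies conditions (a)--(c) and the normalization, so that $\phi=\va_r$ and in particular $\va_r\ne0$. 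It lies in the Schur class because its boundary modulus is $\le1$; it satisfies (a) because for $z\in\BD$ the estimates $|\phi(z)|^2\le P[|\phi(e^{it})|^2](z)$ and $|\th(z)|^2\le P[|\th(e^{it})|^2](z)$ valid for $H^2$ functions add up to $P[1](z)=1$; it satisfies (b) because, $\phi$ being outer and hence zero-free, any competitor factors as $\phi k$ with $k\in H^\infty$, $\|k\|_\infty\le1$; and both (c) (with $\sF_r=\BC$) and the normalization hold since $\phi(0)>0$. Conversely, if $\va_r\ne0$ then $\va_r$ is a nonzero function in the Smirnov class $N^+$, so $\log|\va_r(e^{it})|\in L^1$, and combining this with $2\log|\va_r(e^{it})|\le\log(1-|\th(e^{it})|^2)\le0$ (the boundary form of (a)) forces $\log(1-|\th|^2)\in L^1$.

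For the third assertion, if $\va_r=\va_l=0$ then $\sF_r=\sF_l=\{0\}$, so Remark \ref{zerocoupl} applies and yields that $\textup{RI}_\si^\circ$ consists of a single element for every minimal realization $\si$ of $\th$. The only genuinely substantive point in the whole argument is the verification of condition (a) for the candidate function $\phi$ in the second assertion --- one must check $|\phi(z)|^2\le1-|\th(z)|^2$ throughout the disc and not merely on the boundary --- which is exactly where the subharmonicity/harmonic-majorant argument for $|\phi|^2+|\th|^2$ is needed; everything else is bookkeeping with the definitions, together with the already-established equivalence (Theorem \ref{single1} and Remark \ref{zerocoupl}) to the dichotomy in \cite[Theorem~1.1]{ArovNu00}.
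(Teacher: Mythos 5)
Your argument is correct and follows essentially the same route as the paper: the scalar case forces the defining conditions for $\va_r$ and $\va_l$ to coincide, the equivalence $\va_r\ne 0\Leftrightarrow \log(1-|\th(\cdot)|)\in L^1(\BT)$ is obtained exactly as in the text (integrability of $\log|\va_r|$ on the boundary in one direction, solvability of the factorization problem $|\va|^2\le 1-|\th|^2$ in the other), and the final claim is Remark~\ref{zerocoupl}. The only difference is that where the paper cites \cite[Theorem~1.2]{61Dev} and \cite[Proposition V.7.1(b)]{Sz-NFBK10} for the existence of a nonzero solution, you construct the outer function explicitly and even verify it is the maximal one, which is more than the corollary requires.
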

\begin{proof}
The fact that $\va_r$ and $\va_l$ coincide follows directly from
the fact that scalar functions commute.  Now assume that
$\va_r\not =0$. Then   $\log |\va_r(\cdot)|\in L^1(\BT)$;  see,
e.g.,  \cite[Theorem~1.2]{61Dev}.  Using  $\log
|\va(\cdot)|^2=2\log |\va(\cdot)|$ for any $\va$, we see that
$\log |\va_r(\cdot)|^2\in L^1(\BT)$. But then, since
\[ |\va_r(z)|^2\leq 1-|\th(z)|^2 \hspace{.3cm} (z\in \BD) \Longrightarrow \  |\va_r(\z)|^2\leq 1-|\th(\z)|^2 \hspace{.3cm}  (\z\in \BT\ a.e.),
\]
it follows  that $\log(1-|\th(\cdot)|^2)$ belongs to   $L^1(\BT)$. Next use
\[
\log(1-|\th(\cdot)|^2)= \log(1-|\th(\cdot)|) +\log(1+|\th(\cdot)|).
\]
The preceding  identity together with  the fact that $\log(1+|\th(\cdot)|)$ belongs to $L^1(\BT)$ shows that   $\log(1-|\th(\cdot)|)\in  L^1(\BT)$.

Conversely, assume that $\log(1-|\th(\cdot)|)\in  L^1(\BT)$. Then
the factorization problem $|\va(z)|^2\leq 1-|\th(z)|^2$ has a
nonzero solution $\va$ in $H^\iy$  by Theorem 1.2 in \cite{61Dev}
or Proposition~V.7.1 (b) in \cite{Sz-NFBK10}, and hence, $\va_r$
is not zero.

We conclude that  $\va_r  =0$ if and only if  $\log(1-|\th(\cdot)|)\not \in L^1(\BT)$. The final part of the corollary now follows directly from Remark  \ref{zerocoupl} above.
\end{proof}
\medskip

Now assume that $\th\in \sS(\sU, \sY)$ is inner. Then $I_\sU-\th(\z)^*\th(\z)=0$ almost everywhere for $\z\in \BT$, and hence the space  $\sF_r$ consists of the zero element only. Thus, by the above remark,   the set $\textup{RI}_\si^\circ$ consists of  one element only. This proves  the first part of the following theorem.

\begin{thm}\label{innerdeltaH} Let $\si$ be a minimal realization of  the inner function $\th\in \sS(\sU, \sY)$. Then $\textup{RI}_\si^\circ$ consists of a single element, $H_\circ$ say, and  $\d_\si(H_\circ)=0$.
\end{thm}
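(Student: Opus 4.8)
The plan is to prove the remaining assertion $\d_\si(H_\circ)=0$, where $H_\circ$ is the unique element of $\textup{RI}_\si^\circ$ for a minimal realization $\si$ of an inner function $\th$. Since $\textup{RI}_\si^\circ = \textup{KYP}_\si^\circ$ (Corollary~\ref{cor:RIKYPcirc}) and this set coincides with $\sG\sK_{\si,\textup{core}}^\textup{min}$ from \cite{AKP06}, the associated system $\si_{H_\circ}=(A_{H_\circ},B_{H_\circ},C_{H_\circ},D;\sX,\sU,\sY)$ is a minimal passive realization of $\th$, and by the construction in Lemma~\ref{lem:minsysH} it is pseudo-similar to $\si$. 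By definition $\d_\si(H_\circ)=I_\sU-D^*D-B_{H_\circ}^*B_{H_\circ}$, so I must show $I_\sU - D^*D - B_{H_\circ}^*B_{H_\circ}=0$.

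First I would recall that passivity of $\si_{H_\circ}$ makes $M(\si_{H_\circ})$ a contraction from $\sX\oplus\sU$ to $\sX\oplus\sY$; applying this to vectors of the form $(0,u)$ gives $\|B_{H_\circ}u\|^2+\|Du\|^2\le\|u\|^2$, i.e.\ $\d_\si(H_\circ)\ge 0$, which we already know from (C3). The point is that equality must hold. Here the key input is that $\th$ is inner, so $\th(\z)^*\th(\z)=I_\sU$ a.e.\ on $\BT$, which forces the "defect" of the colligation to vanish at the boundary. The natural route is to use the standard fact that for a minimal (hence closely connected) passive system with inner transfer function, the colligation $M(\si_{H_\circ})$ is \emph{co-isometric} restricted appropriately — more precisely, minimality plus passivity plus $\th$ inner implies $B_{H_\circ}^*B_{H_\circ}+D^*D=I_\sU$; equivalently the "input normalization" holds. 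I would establish this by computing, for the de Branges--Rovnyak/restricted-shift model $\si_\circ$ used in the proof of Lemma~\ref{lem:minopt}: when $\th$ is inner, $\sH(\th)=H^2(\sY)\ominus \th H^2(\sU)$ with the ordinary $H^2$ norm, and a direct calculation of $\|B_\circ u\|_{\sH(\th)}^2 + \|Du\|^2$ using $(B_\circ u)(z)=z^{-1}(\th(z)-\th(0))u$ and $Du=\th(0)u$ gives $\|\th(\cdot)u\|_{H^2(\sY)}^2=\|u\|^2$ because $\th$ is isometric on $\BT$; this yields $\d_{\si_\circ}(\cdot)=0$. Then transport this identity back to $\si_{H_\circ}$ via the unitary equivalence / pseudo-similarity (note $B_{H_\circ}$ and $D$ are intertwined with $B_\circ$ and $D$, and $\d$ only involves the bounded operators $B$ and $D$, so it is invariant under such equivalences).

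Alternatively, and perhaps more cleanly, I would argue model-free: since $\si_{H_\circ}$ is passive with transfer function $\th$, its transfer function satisfies $I_\sU-\th(\z)^*\th(\z)=0$ a.e., and by the standard ``defect operator'' identity for passive systems (e.g.\ from the fundamental relation $I-\th(e^{it})^*\th(e^{it})$ equals the strong limit of certain expressions built from $B^*,D^*$ and the characteristic function, cf.\ \cite{AKP06}), the boundary defect being zero forces $B_{H_\circ}^*(I-A_{H_\circ}^*\!\cdot)^{-1}\cdots$ to vanish; evaluating at the appropriate point gives $I_\sU-D^*D-B_{H_\circ}^*B_{H_\circ}=0$. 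Concretely: the Schur-complement/Redheffer computation shows $\d_\si(H_\circ)^{1/2}$ is the "$H$-component" realizing the Schur class function $\va_r$ (the right defect function of $\th$) at a suitable scale, and since $\th$ inner implies $\sF_r=\{0\}$ and $\va_r=0$ (as noted just before the statement of Theorem~\ref{innerdeltaH}), we get $\d_\si(H_\circ)=0$.

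The main obstacle I anticipate is making rigorous the link between the algebraically-defined operator $\d_\si(H_\circ)$ and the analytic object $\va_r$ (or the boundary defect $I-\th^*\th$): one must check that for $H\in\textup{RI}_\si^\circ$ the operator $\d_\si(H)^{1/2}$ is exactly the feedthrough-type piece whose vanishing is equivalent to $\sF_r=\{0\}$, and that minimality is what licenses this identification (a non-minimal passive realization of an inner function can certainly have $\d\ne 0$). I would handle this by invoking the structure already set up in \cite{AKP06} — specifically the description of $\sG\sK_{\si,\textup{core}}^\textup{min}$ and the role of the defect functions in Theorem~7.1 there — rather than re-deriving it, so that the proof reduces to: (i) $\textup{RI}_\si^\circ=\sG\sK_{\si,\textup{core}}^\textup{min}$; (ii) $\th$ inner $\Rightarrow \va_r=0$; (iii) $\va_r=0 \Rightarrow \d_\si(H_\circ)=0$, where (iii) is read off from the correspondence in \cite{AKP06}. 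The model computation of paragraph two would serve as a self-contained fallback proof of (iii) in the inner case.
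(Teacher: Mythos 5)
Your argument is correct in substance, but it establishes the key identity $\d_\si(H_\circ)=0$ through a different concrete model than the paper. The paper starts from the Sz.-Nagy--Foias \emph{conservative} realization $\si_1$ of the inner function $\th$ (unitary system matrix $M(\si_1)$), passes to its first minimal restriction $\si_{10}$ obtained by compressing to $\overline{\im(A_1|B_1)}$, checks that $M(\si_{10})$ is an isometry by restricting the three unitarity identities, and then transports this to $\si_{H_\circ}$ by unitary equivalence of minimal optimal realizations (Theorem 3.2 of \cite{AKP97}); in particular it gets the stronger conclusion that the whole system matrix $M(\si_{H_\circ})$ is isometric, of which $\d_\si(H_\circ)=0$ is just the input corner. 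You instead use the de Branges--Rovnyak restricted-shift model $\si_\circ$ from the proof of Lemma \ref{lem:minopt}: for inner $\th$ the space $\sH(\th)$ is $H^2(\sY)\ominus\th H^2(\sU)$ with the $H^2$-norm, $B_\circ u=\tilde\th(\cdot)u$ lies in it, and $\|B_\circ u\|^2_{\sH(\th)}+\|\th(0)u\|^2=\|\th(\cdot)u\|^2_{H^2(\sY)}=\|u\|^2$ gives exactly the needed input-defect identity. That is a valid and somewhat more economical computation (you only verify the one column you need, at the price of identifying the $\sH(\th)$-norm for inner $\th$), whereas the paper's route also yields isometry of $M(\si_{H_\circ})$ and extends with no change to the case $\va_r=0$ (Remark \ref{rem:new1}).

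One step in your write-up must be tightened. The transfer of $\d_{\si_\circ}=0$ to $\si_{H_\circ}$ has to go through \emph{unitary} equivalence: pseudo-similarity is not enough, since $\d_\si(H_\circ)=I_\sU-D^*D-B_{H_\circ}^*B_{H_\circ}$ involves $B_{H_\circ}^*B_{H_\circ}$ and the pseudo-similarity $H_\circ^{1/2}$ is in general unbounded and non-unitary, so it does not preserve this operator. To invoke Theorem 3.2 of \cite{AKP97} you need $\si_{H_\circ}$ to be not only minimal but \emph{optimal}; you only assert minimality. The missing fact is available exactly as in the proof of Proposition \ref{prop:minelt}: since $\textup{RI}_\si^\circ$ is a singleton, $H_\circ$ is its minimal element, and via $\textup{RI}_\si^\circ=\textup{KYP}_\si^\circ=\sG\sK_{\si,\textup{core}}^{\textup{min}}$ together with item (ii) of \cite[Proposition 5.8]{AKP97} one concludes that $\si_{H_\circ}$ is a minimal and optimal passive realization of $\th$, hence unitarily equivalent to $\si_\circ$. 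With that inserted, your model-based argument is complete; your alternative ``model-free'' sketch identifying $\d_\si(H_\circ)^{1/2}$ with data of the right defect function $\va_r$ is the part that remains heuristic and should either be dropped or replaced by the model computation.
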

\begin{proof}
It remains to prove $\d_\si(H_\circ)=0$. Since the function $\th$
is inner, we know from the Sz-Nagy--Foias model theory
\cite{Sz-NFBK10} that  $\th$ has an observable realization
$$\si_1=(A_1, B_1, C_1, D; \sX_1, \sU, \sY)$$ such that
its system matrix $M(\si_1)$ is  unitary.  Now put $\sX_{10}=
\overline{\im (A_1 | B_1)}$. Relative  to the Hilbert space direct
sum $\sX_1=\sX_{10}\oplus \sX_{10}^\perp$  the operators $A_1,
B_1, C_1$ admit the following block matrix representations:
\begin{align}
&\hspace{1.8cm}A_1=\begin{bmatrix}A_{10}&\star\\  0&\star\end{bmatrix}:
\begin{bmatrix}\sX_{10}\\ \sX_{10}^\perp\end{bmatrix}\to
\begin{bmatrix}\sX_{10}\\ \sX_{10}^\perp\end{bmatrix}, \label{defAcirc}\\[.2cm]
&B_1=\begin{bmatrix}B_{10}\\ 0\end{bmatrix}: \sU\to
\begin{bmatrix}\sX_{10}\\ \sX_{10}^\perp\end{bmatrix}, \quad C_1=\begin{bmatrix}C_{10}&\star\end{bmatrix}:
\begin{bmatrix}\sX_{10}\\ \sX_{10}^\perp\end{bmatrix}\to \sY.\label{defBCcirc}
\end{align}
Put $\si_{10}=(A_{10}, B_{10}, C_{10}, D; \sX_{10}, \sU, \sY)$.  The above construction implies that  $\si_{10}$  is controllable. Furthermore, since $\si_{1}$  is observable, the same holds true for  $\si_{10}$. Thus $\si_{10}$ is a minimal system. Moreover , the transfer function of  $\si_{10}$ is equal to the transfer function of  $\si_{1}$. Thus $\si_{10}$ is a minimal realization of~$\th$.

Using the terminology  of Section 2.1 in \cite{AKP97}, the system  $\si_{10}$ is the first minimal restriction of  the system $\si$. But then,  by \cite[Theorem 3.2]{AKP97}, the system $\si_{10}$ is a minimal and optimal realization of $\th$.

We claim that $M(\si_{10})$ is an isometry. To see this note that
\begin{align*}
I_{\sX_1}-A_1^*A_1-C_1^*C_1&=
\begin{bmatrix}I_{\sX_{10}}-A_{10}^*A_{10}-C_{10}^* C_{10}&\star\\ \star &\star\end{bmatrix},\\
A_1^*B_1+C_1^*D&=\begin{bmatrix}A_{10}^* B_{10}+C_{10}^* D\\ \star \end{bmatrix},\\
I_\sU-B_1^*B_1 -D^*D&=I_\sU-B_{10}^*B_{10} -D^*D.
\end{align*}
Since $M_{\si_1}$ is unitary, the operators in the left hand side of the three identities above are all zero. Thus
\[
I_{\sX_{10}}-A_{10}^*A_{10}-C_{10}^* C_{10} =0, \quad A_{10}^*B_{10}+C_{10}^*D=0, \quad I_\sU-B_{10}^*B_{10} -D^*D=0.
\]
This shows that  $M(\si_{10})$ is an isometry.

Now use that   the systems $\si_{10}$ and $\si_{H_\circ}$ are
unitarily equivalent (see Theorem 3.2 in~\cite{AKP97}). It follows
that  $M(\si_{H_\circ})$  is an isometry which  implies that
$\d_\si(H_\circ)=0$.
\end{proof}

\medskip
\begin{rem}\label{rem:new1}  Using Proposition 4 in \cite{ArovNu02} and taking into account
 Theorem \ref{single1}, it can be shown that the two statements in Theorem \ref{innerdeltaH}
  remain true if the condition $\th$ is inner is replaced by the condition that the right defect
  function $\va_r$ of $\th$ is zero or, equivalently, that $\sF_r=\{0\}$.  In fact, with some minor
  changes the same proof can be used to derive this more general result.  Indeed, from the Sz-Nagy-Foias model
  theory we know that  $\th$ is the transfer function of  a simple  conservative realization $\si_1$.
  Here \emph{conservative} means  that the system matrix $M_{\si_1}$ is unitary.
  Furthermore,  it is known \textup{(}item \textup{(a)} in \cite[Proposition~4]{ArovNu02}\textup{)}  that the  condition $\sF_r=\{0\}$ implies that $\si_1$ is observable. But then,  as in the proof of Theorem \ref{innerdeltaH}, we  construct the system $\si_{10}$, show that $M_{\si_{10}}$ is an isometry, and  conclude that $\d_\si(H_\circ)=0$.
\end{rem}

\begin{cor}\label{coinner} Let   $\th\in \sS(\sU, \sY)$ be co-inner, and let   $\si$ be a minimal realization of  $\th$. Then $\textup{RI}_\si^\circ$ consists of a single element, $H_\bu$ say, and  $\d_{\si^*}(H_\bu^{-1})=0$.
\end{cor}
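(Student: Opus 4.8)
The plan is to deduce Corollary \ref{coinner} from Theorem \ref{innerdeltaH} by passing to the adjoint system. Recall that $\th$ being co-inner means exactly that $\th^\sim$ is inner, since $\th(\z)\th(\z)^*=I_\sY$ almost everywhere on $\BT$ translates into $\th^\sim(\z)^*\th^\sim(\z)=\th(\bar\z)\th(\bar\z)^*=I_\sY$ almost everywhere; here $\th^\sim(\l)=\th(\bar\l)^*$ as in Theorem \ref{thm:Ric2}. So first I would observe that the adjoint system $\si^*=(A^*, C^*, B^*, D^*; \sX, \sY, \sU)$ is, by Theorem \ref{thm:Ric2}, a minimal realization of $\th_{\si^*}=\th_\si^\sim$, and $\th_\si^\sim$ is inner.

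Next I would apply Theorem \ref{innerdeltaH} to the minimal system $\si^*$ realizing the inner function $\th^\sim$. This gives that $\textup{RI}_{\si^*}^\circ$ consists of a single element, call it $G_\circ$, and that $\d_{\si^*}(G_\circ)=0$. Now I invoke the transfer formula \eqref{sisi*} in Theorem \ref{thm:Ric2}: since $\si$ is minimal and $\textup{RI}_\si$ is non-empty (it is non-empty because $\textup{RI}_{\si^*}^\circ$ is non-empty, or equivalently because $\th_\si$ is a Schur class function, which follows from $\th^\sim$ being inner hence Schur), we have $\textup{RI}_{\si^*}^\circ=\{H^{-1}\mid H\in\textup{RI}_\si^\circ\}$. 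Because $\textup{RI}_{\si^*}^\circ$ is a singleton $\{G_\circ\}$ and inversion of positive selfadjoint operators is a bijection, it follows that $\textup{RI}_\si^\circ$ is also a singleton, say $\textup{RI}_\si^\circ=\{H_\bu\}$, and $H_\bu^{-1}=G_\circ$. Hence $\d_{\si^*}(H_\bu^{-1})=\d_{\si^*}(G_\circ)=0$, which is precisely the assertion.

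The only point that requires a little care — and the step I would flag as the main (minor) obstacle — is checking that the ``$H_\circ$'' produced by Theorem \ref{innerdeltaH} applied to $\si^*$ really is, under the correspondence \eqref{sisi*}, the inverse of the single element of $\textup{RI}_\si^\circ$, and that the notational switch from $H_\circ$ to $H_\bu$ matches the convention in Theorem \ref{thm:Ric2} (where $H_\bu^{-1}$ is the minimal element of $\textup{RI}_{\si^*}^\circ$, i.e.\ it plays the role of the ``$H_\circ$'' of that set). Since both sets are singletons here, minimal equals maximal equals the unique element, so no genuine ordering argument is needed; one only has to align the names. A clean way to present this is to let $H_\bu$ denote the unique element of $\textup{RI}_\si^\circ$ from the start (consistent with Theorem \ref{thm:Ric2}, where $H_\bu$ is the maximal element of $\textup{RI}_\si^\circ$, which coincides with the minimal one in the singleton case), note $H_\bu^{-1}\in\textup{RI}_{\si^*}^\circ$ by \eqref{sisi*}, and then quote $\d_{\si^*}(H_\bu^{-1})=0$ directly from Theorem \ref{innerdeltaH} applied to $\si^*$ and the inner function $\th_\si^\sim$. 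This keeps the proof to just a few lines.
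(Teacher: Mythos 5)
Your proposal is correct, and it differs from the paper's proof in how the first assertion (that $\textup{RI}_\si^\circ$ is a singleton) is obtained. The paper argues directly on $\si$: since $\th$ is co-inner, $I_\sY-\th(\z)\th(\z)^*=0$ a.e.\ on $\BT$, so the space $\sF_l$ is trivial, the coupling function $h_0$ vanishes, and Remark \ref{zerocoupl} (i.e.\ Theorem \ref{single1}) yields uniqueness; only for the second assertion does the paper pass to the adjoint, noting that $\th^\sim$ is inner, that $H_\bu^{-1}\in\textup{RI}_{\si^*}^\circ$ by \eqref{sisi*}, and then invoking Theorem \ref{innerdeltaH} to get $\d_{\si^*}(H_\bu^{-1})=0$. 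You instead run the whole argument through the adjoint: Theorem \ref{innerdeltaH} applied to the minimal realization $\si^*$ of the inner function $\th^\sim$ gives both that $\textup{RI}_{\si^*}^\circ$ is a singleton with vanishing $\d_{\si^*}$, and then the bijection \eqref{sisi*} (legitimate since $\si$ is minimal and $\textup{RI}_\si\neq\emptyset$, e.g.\ by Theorem \ref{thm2b} because $\th$ is Schur) pulls the singleton property back to $\textup{RI}_\si^\circ$. This is exactly the alternative the paper itself flags in the sentence following the corollary ("the first statement \dots can also be proved by using the duality argument"), so your route is sound; what it buys is a uniform treatment of both assertions from a single application of Theorem \ref{innerdeltaH}, at the cost of obscuring that the uniqueness ultimately rests on the same defect-function criterion ($\sF_r=\{0\}$ for $\th^\sim$ is the same condition as $\sF_l=\{0\}$ for $\th$), now reached indirectly via $\si^*$ rather than read off from $\th$ itself. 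Your side remark about aligning the names $H_\circ$, $H_\bu$ is harmless: since both sets are singletons, minimal and maximal elements coincide and no ordering argument is needed, just as you say.
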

\begin{proof}
 Assume   $\th\in \sS(\sU, \sY)$ is co-inner. Then $I_\sY-\th(\z)\th(\z)^*=0$ almost everywhere for $\z\in \BT$, and hence the space  $\sF_l$ consists of the zero element only.  The latter implies (see Remark \ref{zerocoupl}) that  $\textup{RI}_\si^\circ$ consists of a single element.

Next we use Theorem \ref{thm:Ric2}. Recall that  $\th^\sim (\l)=\th(\bar{\l})^*$ for $\l \in \BD$. The fact that $\th$ is co-inner, implies that $\th^\sim$ is inner. Indeed, we have
\begin{align*}
\mbox{$\th$ is co-inner}\ &\Longleftrightarrow\    \th(\z) \th(\z)^*=I\  \mbox{almost everywhere on $\BT$}\\
&\Longleftrightarrow \ \th(\bar{\z}) \th(\bar{\z})^*=I\  \mbox{almost everywhere on $\BT$}\\
&\Longleftrightarrow \ \th^\sim(\z)^*  \th^\sim(\z)=I\  \mbox{almost everywhere on $\BT$}\\
&\Longleftrightarrow \ \mbox{$\th^\sim$ is inner}.
\end{align*}
Since  $\si$ is a minimal realization of  $\th$, the system
$\si^*$ is a minimal realization for  $\th^\sim$. Now let $H_\bu$
be the (unique) element in $\textup{RI}_\si^\circ$. From
\eqref{sisi*} it follows that $H_\bu^{-1}$ belongs to
$\textup{RI}_{\si^*}^\circ$. But $\si^*$ is a minimal realization
of an inner function. Hence, $\d_{\si^*}(H_\bu^{-1})=0$ by
Theorem~\ref{innerdeltaH}.
\end{proof}

Note that the first statement in the above corollary can also be proved by using the duality argument used in the second paragraph of the above proof.

In general, the second part of Theorem \ref{innerdeltaH} is not true for a co-inner function. See Example \ref{ex:co-inner} in the next section.

\begin{rem}\label{rem:new2} Finally, again with minor changes,  one can   prove that Corollary \ref{coinner} remains true if the condition $\th$ is co-inner is replaced by the condition that $\sF_l=\{0\}$.
\end{rem}

\setcounter{equation}{0}
\section{Examples}\label{sec:examples}
In this section we present a few examples. Throughout $\th$ is a Schur class function and  $\si=(A,B,C, D; \sX, \sU, \sY) $ is a minimal realization of $\th$.  In the first three examples the state space $\sX$ will be  finite dimensional. In that case a positive operator on $\sX$  will be bounded and boundedly invertible, and the Riccati equality can be rewritten as
\begin{equation}\label{RicE2}
\alpha_\si(H)-\beta_\si(H)^* \delta_\si(H)^{[-1]}\beta_\si(H)=0,
\end{equation}
where
\begin{align*}
&\alpha_\si(H) =  H - A^\ast H A - C^\ast C, \quad  \beta_\si(H) =
D^\ast C + B^\ast H A, \\ &\hspace{2cm}\delta_\si(H) =  I - D^\ast
D - B^\ast H B.
\end{align*}
Furthermore, if $\sX$ is   finite dimensional, then  $H\in \textup{RE}_\si$ if and only if  $H$ is a positive  operator on $\sX$, the operator   $\delta_\si(H)$ is nonnegative, and   $H$ satisfies \eqref{RicE2}. Similarly, if $\sX$ is   finite dimensional, then  $H\in \textup{RI}_\si$ if and only if  $H$ is a positive  operator on $\sX$,  the operator $\delta_\si(H)$ is nonnegative, and
\begin{equation}\label{RicI2}
\alpha_\si(H)- \beta_\si(H)^* \delta_\si(H)^{[-1]}\beta_\si(H)\geq 0.
\end{equation}
As before, the symbol ${[-1]}$ denotes the Moore--Penrose inverse.

\medskip
\begin{exa}\label{ex:*optimal} \textup{We present a simple scalar  example showing that the maximal solution
in $\textup{RI}_\si$ may not belong  to $\textup{RE}_\si$. To do
this  we use the  scalar function $\th$ given by
\cite[eq.~(3.3)]{AKP97}, i.e.,
\[
\th(\l)=(2\l +4)(\l +8)^{-1}.
\]
From \cite{AKP97} we know that $\th$ is a Schur class function (in fact, $|\theta(\lambda)| \leq 6/7 < 1$ for all $\lambda \in \BD$) and a minimal realization of $\th$ is given by
\begin{equation}\label{systemex1}
\si=(-\frac{1}{8}, 1, \frac{3}{16}, \frac{1}{2}; \BC, \BC,\BC).
\end{equation}
For this choice of $\si$  the set $\textup{RE}_\si$ is a singleton
and $\textup{RI}_\si$ is an interval
\begin{equation}\label{RicEI2}
\textup{RE}_\si=\left\{\frac{3}{64}\right\} \ands \textup{RI}_\si=\left[\frac{3}{64}, \frac{3}{4}\right].
\end{equation}
In particular, the maximal solution  $H_\bu$ of the Riccati inequality does not belong $\textup{RE}_\si$. }
\end{exa}

To prove \eqref{RicEI2} let $h$ be a positive real number viewed as a positive operator on $\BC$. Then
\[
\alpha_\si(h)  =  \frac{9}{64} \left(7 h - \frac{1}{4}\right), \quad  \beta_\si(h) = \frac{1}{8} \left(\frac{3}{4}  -  h\right),\quad
\delta_\si(h)= \frac{3}{4}  -  h.
\]
Note that $\delta_\si(h)\geq 0$ if and only if  $h\leq 3/4$. The Moore-Penrose inverse of $\delta(h)$ is given by
\[
\delta_\si(h)^{[-1]} =
\left\{
\begin{array}{cl}
 \left(\frac{3}{4}  -  h\right)^{-1}  \ \ &(h \neq 3/4)\\[.1cm]
0  \ \ &(h = 3/4)
\end{array}
\right. \ \ : \ \BC \rightarrow \BC.
\]
For $h=3/4$ the right hand sides of  both \eqref{RicE2} and \eqref{RicI2} are zero, and the   left hand sides are strictly positive.   Thus $3/4\not \in \textup{RE}_\si$ and $3/4\in \textup{RI}_\si$. Next, let $0<h<3/4$.  Then, respectively,   \eqref{RicE2} and \eqref{RicI2}  reduce to
\begin{align}
 \frac{9}{64} \left(7 h - \frac{1}{4}\right)&-\frac{1}{64}\left(\frac{3}{4}  -  h\right)^2\left(\frac{3}{4}  -  h\right)^{-1}=0, \label{RicE3}\\
  \frac{9}{64} \left(7 h - \frac{1}{4}\right)&- \frac{1}{64}\left(\frac{3}{4}  -  h\right)^2\left(\frac{3}{4}  -  h\right)^{-1}\geq 0. \label{RicI3}
\end{align}
Equation \eqref{RicE3} has $h=3 /64$ as its unique solution in the interval $0<h< 3/4$, which proves the first equality in  \eqref{RicEI2}. All solutions $h$ of    \eqref{RicI3} are given by $h\geq 3 /64$. Together with  $0<h\leq 3/4$ this yields  the second equality in  \eqref{RicEI2}.

Let $\si^*$ be the adjoint of the system $\si$ given by \eqref{systemex1}, i.e.,
\[
\si^*=(-\frac{1}{8},  \frac{3}{16}, 1, \frac{1}{2}; \BC, \BC,\BC).
\]
For this choice we have the following analogue of \eqref{RicEI2}
\begin{equation}\label{RicEI3}
\textup{RE}_{\si^*}=\left\{\frac{4}{3}\right\} \ands \textup{RI}_\si=\left[\frac{4}{3}, \frac{64}{3}\right].
\end{equation}

By Theorem \ref{thm:Ric2} the second identity in \eqref{RicEI3} follows from the second identity in \eqref{RicEI2}.  The first identity in \eqref{RicEI2} cannot be obtained  in this way but this identity is proved in a similar way as the first identity in  \eqref{RicEI3} is proved. We omit the further details. Note that in this case
\begin{equation}\label{RicE4}
\textup{RE}_{\si^*} \not = \{H^{-1}\mid H\in \textup{RE}_{\si}\}.
\end{equation}


\begin{exa}\label{ex:uniquesol}  \textup{We consider  the scalar function
\[
\th(\l)=\frac{\l ab}{1- \l^2 ab},\quad \mbox{where} \quad
0<a<b<1,\quad  a^2+b^2=1.
\]
The function $\th$ is a Schur clas function,  and $\th$ is the transfer function of the system $(A,B,C, D; \BC^2, \BC, \BC)$, where
\[
A=\begin{bmatrix} 0& a\\ b&0\end{bmatrix},\quad  B=\begin{bmatrix} 0 \\ a \end{bmatrix}, \quad
C=\begin{bmatrix} 0 &  b \end{bmatrix}, \quad D=0.
\]
The system $\si$ is a passive minimal realization of $\th$,  and hence $H_1=I_{\BC^2}$ is a solution of the  Riccati   equality associated to $\si$. We shall see that  there are three other solutions, namely:
\begin{align*}
&H_2 = \frac{1}{a^2} \begin{bmatrix}\left(1-ab\right) \frac{b}{a}&\left(b-a\right) \sqrt{ \frac{b}{a}}\\ \left(b-a\right) \sqrt{ \frac{b}{a}}& 1-ab \end{bmatrix}, \quad H_3= \frac{1}{a^2} \begin{bmatrix}\left(1-ab\right) \frac{b}{a} &-\left(b-a\right) \sqrt{ \frac{b}{a}}\\ -\left(b-a\right) \sqrt{ \frac{b}{a}}& 1-ab \end{bmatrix},\\
&\hspace{4cm} H_4=\frac{1}{a^4}\begin{bmatrix} b^4&0\\ 0 &a^2b^2 \end{bmatrix}.
\end{align*}
We shall see that   $H_1\leq H_j \leq H_4$, $j=2, 3$. It follows from Theorem  \ref{thm2b} that  $H_1=I_{\BC^2}$ is the minimal optimal solution, and thus  $H_1$ is the  minimal element  in $\textup{RI}_\si$. It turns out  that  $H_4$ also belongs to $\textup{RI}_\si$ and is the maximal element  in $\textup{RI}_\si$. }
\end{exa}


\noindent To derive the results mentioned above, put
\[
H = \begin{bmatrix} x_1 &  x_2 \\ x_3 &  x_4 \end{bmatrix}.
\]
By assumption   $H$ is positive definite. In particular, $\overline{x_3}= x_2$.
In this case we have
\begin{align*}
\alpha_\si(H) &=
\begin{bmatrix}
x_1-b^2x_4 & x_2-ab x_3\\
 x_3-ab x_2 & x_4-a^2 x_1 -b^2
\end{bmatrix},\quad \beta_\si(H)=\begin{bmatrix} ab x_4 & a^2 x_3\end{bmatrix},\\
&\hspace{2cm}\delta_\si(H) = 1-a^2 x_4.
\end{align*}
Recall that $\delta_\si(H) = 1-a^2 x_4$ is required to be non-negative, and the associate Riccati equality is   the identity
\begin{equation}\label{RIcEex3}
\begin{bmatrix}
x_1-b^2x_4 & x_2-ab x_3\\
 x_3-ab x_2 & x_4-a^2 x_1 -b^2
\end{bmatrix}- (1-a^2 x_4)^{[-1]}
\begin{bmatrix}a^2b^2 x_4^2   & a^3 b x_4 x_3\\
a^3 b x_2 x_4&a^4 x_2 x_3 \end{bmatrix}=0.
\end{equation}
Since $H$ is positive definite, $x_4>0$. Together  with $1-a^2 x_4 \geq 0$ this implies that $0<x_4\leq a^{-2}$.   But $x_4= a^{-2}$ is excluded, because in that case  the Riccati equation \eqref{RIcEex3} has no solution which can be proved by direct checking.  Therefore  we may assume that  $0<x_4< a^{-2}$, and hence the Moore-Penrose inverse in \eqref{RIcEex3} is a usual inverse. But then, with  elementary  computations or using the computer algebra program Mathematica, it is straightforward to show that the matrices $H_j$, $j=1,2,3,4$, are the only solutions of the Riccati equality \eqref{RIcEex3}.

Since $H_2$ and $H_3$ have the same diagonal entries, neither $H_2\leq H_3$ nor $H_3\leq H_2$. Indeed, if  $H_2\leq H_3$, then $H_3-H_2$ is a nonnegative operator of the form \eqref{A:matpos} with zero diagonal entries. But then, by Proposition \ref{prop:A1}, the off diagonal entrties are zero too, and hence $H_2=H_3$ which is not true. In a similar way one shows that $H_3\leq H_2$ is excluded. The fact that $H_4$ is the maximal element  in $\textup{RI}_\si$ can be obtained from Theorem  \ref{thm:Ric2} by showing that $H_4^{-1}$ is the minimal element of $\textup{RI}_{\si^*}$. Note that is this case
\[
\si^*=\left(\begin{bmatrix} 0& b\\ a&0\end{bmatrix}, \begin{bmatrix} 0 \\ b \end{bmatrix},
\begin{bmatrix} 0 & a \end{bmatrix}, 0; \BC^2, \BC, \BC\right).
\]
\medskip
\begin{exa}\label{ex:co-inner}
\textup{Let $\th(z)=\begin{bmatrix}z&0\end{bmatrix}$. Then  $\th(z)\th(z)^*=1$ for each  $z\in \BT$, and thus $\th$ is co-inner. We show that   the statement in the second part of Theorem \ref{innerdeltaH} does not hold for this  co-inner function $\th$.  To do this put
\begin{align*}
A=0: \BC\to \BC, \quad B&=\begin{bmatrix}1&0\end{bmatrix}: \BC^2\to \BC, \quad  C=1: \BC\to \BC, \\ D&=\begin{bmatrix}0&0\end{bmatrix}: \BC^2\to \BC.
\end{align*}
Then the system $\si=(A,B,C, D; \BC, \BC^2, \BC)$ is a minimal realization of $\th$ and its system matrix
\[
M(\si)=\begin{bmatrix} 0&1&0\\ 1&0&0  \end{bmatrix}\quad \mbox{is
a co-isometry}.
\]
Thus,   by Corollary  \ref{coinner} above, $\textup{RI}_\si^\circ=\{1\}$.  But in this case
\[
\d_\si(1)=I_{\BC^2}-D^*D-B^*B=\begin{bmatrix}1&0\\0&1\end{bmatrix}-\begin{bmatrix}1\\0\end{bmatrix}\begin{bmatrix}1&0\end{bmatrix}=\begin{bmatrix}0&0\\0&1\end{bmatrix}.
\]
Thus $\d_\si(1)$ is non-zero.}
\end{exa}

\begin{exa}\label{ex:lambdaK} \textup{We present an example of a minimal passive system  $\si$ such that  $H\in \textup{RI}_\si$  while  $\si_H$  is not minimal. In particular, $\textup{RI}_\si^\circ$ will be a proper subset of $ \textup{RI}_\si$.   The transfer function $\th$ of the system involved will be of the form $\th(\l)= \l K$, where $K$ is a contraction. The latter allows us to  use results from \cite[Section 2.3]{AKP05}}.
\end{exa}

\smallskip
Throughout $\ell_+^2$  is the Hilbert space  of  all complex valued sequences that are square summable in absolute value. Furthermore,  $R$ and $S$ are the  linear operators acting in $\ell_+^2$ defined by
\begin{align}
\sD(R)&=\{x\in \ell_+^2 \mid (x_0,2  x_1, 3x_2, \ldots)\in
\ell_+^2\}, \quad Rx=(0, x_0,2  x_1, 3x_2, \ldots);\label{defR1}\\
\sD(S)&= \{\l v+x\mid \l\in \BC, \ v=\left(1, 1/2,1/3,
\ldots\right), \quad  x\in  \sD(R)\},\ands \nn\\ & S(\l v+x)=\l
e_0 +Rx, \quad \mbox{where}\quad e_0=(1, 0,0, 0,
\ldots).\label{defS}
\end{align}
The operators $R$ and $S$ are both closed  densely defined linear operators and both are one-to-one.  Furthermore,
\[
\sD(R)\subset \sD(S),\quad S|_{\sD(R)}=R, \quad  \sD(R)\not = \sD(S)\not = \ell_+^2,   \quad \im S=\overline{\im S}= \ell_+^2.
\]
Since $S$ is densely defined, its adjoint $S^*$ is well defined. In what follows $\sU$ and $\sY$ denote  the spaces $\sD(R)$ and  $\sD(S^*)$ endowed with the corresponding graph norms. Thus
\begin{align*}
\|x\|_\sU&=\left(\|x\|^2+\|Rx\|^2\right)^{1/2}, \quad x\in \sD(R),\\
\|x\|_\sY&=\left(\|x\|^2+\|S^*x\|^2\right)^{1/2}, \quad x\in \sD(S^*).
\end{align*}
Next, put $\sX=\ell_+^2$, and define the canonical embeddings
\begin{align*}
&\t_\sU:\sU\to  \begin{bmatrix} \sX \\ \sX \end{bmatrix},\quad \t_\sU u= \begin{bmatrix} u\\ R u \end{bmatrix}, \quad u\in  \sD(R),\\
&\t_\sY:\sY \to  \begin{bmatrix} \sX \\ \sX \end{bmatrix},\quad \t_\sY y= \begin{bmatrix} y\\  S^*y \end{bmatrix}, \quad y\in   \sD(S^*).
\end{align*}
Note that both $\t_\sU$ and  $\t_\sY$ are isometries. We also need
the projections
\[
\Pi_1= \begin{bmatrix} I & 0 \end{bmatrix}: \begin{bmatrix} \sX \\ \sX \end{bmatrix}\to \sX \ands  \Pi_2= \begin{bmatrix} 0 & I \end{bmatrix}: \begin{bmatrix} \sX \\ \sX \end{bmatrix}\to \sX.
\]
Given we these operators we consider  the system
\begin{align}
&\si=(0, B, C, 0; \sX, \sU, \sY), \quad \mbox{where}\nn\\
&\hspace{2cm}B=\Pi_1\t_\sU:\sU\to \sX \ands
C=\left(\Pi_2\t_\sY\right)^*:\sX\to \sY.\label{defexsi}
\end{align}
Clearly, $B$ and $C$ are contractions, and hence the system matrix $M(\si)$
 is a contraction too. It follows that  $\si$ is passive. Note that $\im B=\sD(R)$, and hence $\overline{\im B}=\overline{\sD(R)}=\sX$.  Furthermore, $\im C^*=\im S$, and thus $\overline{\im C^*}=\sX$. The latter implies that  $C$ is one-to-one. We conclude that the system $\si$ is minimal. Finally,  the transfer function  of $\si$ is the Schur class function $\th$ given by $\th(\l)=\l CB$.

 Next we consider a second system
\begin{align}
&\wh{\si}=(0, \wh{B}, \wh{C}, 0; \sX, \sU, \sY), \ \mbox{where}\nn\\
&\hspace{1.5cm}\wh{B}=\Pi_2\t_\sU: \sU\to \sX \ands \wh{C}=(\Pi_1\t_\sY)^*: \sX\to \sY. \label{defexwhsi}
\end{align}
Note that $\im \wh{B}=\im R$. Thus $\im R$ is not dense in $\sX$,
and hence the system $\wh{\si}$ is not minimal.

 \begin{prop}\label{prop:BC1} The systems $\si$ and $\wh{\si}$  defined by \eqref{defexsi} and \eqref{defexwhsi}, respectively, have the same transfer function, and the operator  $S$ defined by \eqref{defS} is a pseudo-similarity from $\si$ to $\wh{\si}$.
 \end{prop}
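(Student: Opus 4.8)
The plan is to verify that $S$ is a pseudo-similarity from $\si$ to $\wh\si$ by checking, one by one, the four defining conditions \eqref{pseusim1}--\eqref{pseusim4}, read with $A_H,B_H,C_H$ replaced by the operators $\wh A=0$, $\wh B$, $\wh C$ of $\wh\si$ (and with $A=0$ the state operator of $\si$). The equality of transfer functions is then a formality: since both systems have zero state operator and zero feed-through operator, $\th_\si(\l)=\l CB$ and $\th_{\wh\si}(\l)=\l\wh C\wh B$, and the identity $CB=\wh C\wh B$ drops out of the computation for \eqref{pseusim4} (it also follows abstractly from \cite[Proposition~3.1]{AKP06}, pseudo-similar systems having the same transfer function). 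Three of the four conditions are immediate. First, $S$ is closed, densely defined, injective and has dense range, all recorded in the list of properties of $R$ and $S$ just before the proposition; in particular $\overline{\sD(S)}=\sX$ (because $\sD(S)\supseteq\sD(R)$ contains all finitely supported sequences) and $\overline{\im S}=\im S=\ell_+^2=\sX$, which is \eqref{pseusim1}. Second, \eqref{pseusim2} holds trivially since $A=\wh A=0$. Third, for \eqref{pseusim3}: if $u\in\sU=\sD(R)$ then $Bu=\Pi_1\t_\sU u=u\in\sD(R)\subset\sD(S)$, so $B\sU\subset\sD(S)$, and, using $S|_{\sD(R)}=R$, $S(Bu)=Su=Ru=\Pi_2\t_\sU u=\wh Bu$, hence $SB=\wh B$.

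The one substantive point is \eqref{pseusim4}, i.e.\ $Cx=\wh C Sx$ for every $x\in\sD(S)$. Here I would unwind the definitions $C=(\Pi_2\t_\sY)^*$ and $\wh C=(\Pi_1\t_\sY)^*$ by pairing with an arbitrary $y\in\sY=\sD(S^*)$. Since $\t_\sY y$ has first component $y$ and second component $S^*y$, and the inner product of $\sX$ is the $\ell_+^2$-inner product, one obtains $\lg Cx,y\rg_\sY=\lg x,\Pi_2\t_\sY y\rg_\sX=\lg x,S^*y\rg_{\ell_+^2}$ for $x\in\sX$, and likewise $\lg\wh C z,y\rg_\sY=\lg z,\Pi_1\t_\sY y\rg_\sX=\lg z,y\rg_{\ell_+^2}$ for $z\in\sX$. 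Specializing the second identity to $z=Sx$ with $x\in\sD(S)$ (so $Sx\in\ell_+^2=\sX$) and invoking the defining relation $\lg Sx,y\rg_{\ell_+^2}=\lg x,S^*y\rg_{\ell_+^2}$ of the adjoint $S^*$ — legitimate precisely because $x\in\sD(S)$ and $y\in\sD(S^*)$ — yields $\lg\wh C Sx,y\rg_\sY=\lg Cx,y\rg_\sY$ for all $y\in\sY$. Since $\sY$ is a Hilbert space and both $Cx$ and $\wh C Sx$ lie in it ($C$ and $\wh C$ are bounded, being adjoints of the bounded contractions $\Pi_2\t_\sY$ and $\Pi_1\t_\sY$), this forces $Cx=\wh C Sx$, so all of \eqref{pseusim1}--\eqref{pseusim4} hold and $S$ is a pseudo-similarity from $\si$ to $\wh\si$. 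Finally, taking $x=u\in\sD(R)\subset\sD(S)$ in \eqref{pseusim4} and using $SB=\wh B$ gives $CBu=\wh C S(Bu)=\wh C\wh Bu$, whence $CB=\wh C\wh B$ and $\th_\si=\th_{\wh\si}$.

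The hard part, such as it is, is merely bookkeeping: keeping the graph inner product of $\sY=\sD(S^*)$ carefully distinct from the $\ell_+^2$-inner product carried by $\sX$, and using that a bounded operator with values in the Hilbert space $\sY$ is determined by its pairings $\lg\,\cdot\,,y\rg_\sY$, $y\in\sY$. No property of the weighted-shift operators $R$ and $S$ beyond those already listed — closed, densely defined, injective, $\sD(R)\subset\sD(S)$ with $S|_{\sD(R)}=R$, and $\overline{\im S}=\ell_+^2$ — enters the argument.
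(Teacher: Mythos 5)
Your proof is correct, and for the one substantive step it takes a genuinely different route from the paper. The paper establishes $Ca=\wh{C}Sa$ by first proving a separate lemma (Lemma \ref{lem:tauY*1}): using Kato's decomposition of $\sX\oplus\sX$ into the graph of $S^*$ and the ``inverse graph'' of $-S$, it computes $\t_\sY^*\Pi_2^*a$ and $\t_\sY^*\Pi_1^*Sa$ explicitly and shows both equal the unique $x_1\in\sD(SS^*)$ with $(I+SS^*)x_1=Sa$. You instead avoid computing $\t_\sY^*$ on any concrete vector: you pair $Cx$ and $\wh{C}Sx$ against an arbitrary $y\in\sY$, reduce both pairings to $\ell_+^2$-inner products via $\lg Cx,y\rg_\sY=\lg x,S^*y\rg_{\ell_+^2}$ and $\lg\wh{C}Sx,y\rg_\sY=\lg Sx,y\rg_{\ell_+^2}$, and invoke only the defining relation of the adjoint $\lg Sx,y\rg=\lg x,S^*y\rg$ for $x\in\sD(S)$, $y\in\sD(S^*)$. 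This is shorter and needs no structural input beyond the definition of $S^*$; what it gives up is the explicit description $(I+SS^*)x=Sa$ of the vector $\t_\sY^*\Pi_2^*a$, which the paper's lemma records as a by-product. The remaining verifications (density of $\sD(S)$ and of $\im S$, triviality of the condition on the state operators since $A=\wh{A}=0$, the computation $SB=\wh{B}$, and the deduction $CB=\wh{C}\wh{B}$) coincide with the paper's.
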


 Now put $H=(S^*S)^{1/2}$. Then we know from \cite[Proposition 4.5]{AKP06} that $H\in \textup{KYP}_\si$, and thus  $H\in \textup{RI}_\si$, by Theorem \ref{thm:RIKYP}. Moreover,  $\si_H$ is unitarily equivalent to $\wh{\si}$. In particular, $\si_H$ is not minimal, and thus $H\not \in \textup{RI}_\si^\circ$.

\medskip
The above proposition  can be obtained by applying the result of \cite[Section 2.3.1]{AKP05}. For sake of  completeness we present the proof. In order to do this it will be convenient  first to prove the following  lemma.

\begin{lem}\label{lem:tauY*1} Let $a\in \sD(S)$ and let $x\in \sX$. Then the following three statements are  equivalent:
\begin{align}
&\textup{(a)}\ \t_\sY^*\Pi_2^* a=x, \quad \textup{(b)}\
\t_\sY^*\Pi_1^* Sa=x , \label{equivax1}\\ &\textup{(c)}\ x\in
\sD(SS^*)\quad \text{and}\quad   (I+SS^*)x=Sa. \label{equivax2}
\end{align}
In particular, $\t_\sY^*\Pi_2^* a=\t_\sY^*\Pi_1^* Sa$ for each   $a\in \sD(S)$.
\end{lem}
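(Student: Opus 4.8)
The plan is to work directly from the definitions in \eqref{defexsi} and \eqref{defexwhsi}, unravelling the operators $\t_\sU$, $\t_\sY$, $\Pi_1$, $\Pi_2$, and using the relation $S|_{\sD(R)}=R$ throughout. The claim is that statements (a), (b), (c) are equivalent for fixed $a\in \sD(S)$ and $x\in \sX$. The natural route is a cycle: I would prove (c) $\Rightarrow$ (a), then (a) $\Rightarrow$ (b), then (b) $\Rightarrow$ (c), or alternatively show (a) $\Leftrightarrow$ (c) and (b) $\Leftrightarrow$ (c) separately since both equivalences have the same flavour. The final sentence, that $\t_\sY^*\Pi_2^*a=\t_\sY^*\Pi_1^*Sa$ for all $a\in\sD(S)$, is then immediate: given $a$, set $x:=\t_\sY^*\Pi_2^*a$; by (a) $\Rightarrow$ (b) this $x$ also equals $\t_\sY^*\Pi_1^*Sa$.

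For the core equivalence, the key computation is to identify $\t_\sY^*$ explicitly. Since $\t_\sY y=\col(y,S^*y)$ for $y\in\sD(S^*)$, for a column vector $\col(p,q)\in\sX\oplus\sX$ and $y\in\sD(S^*)$ we have $\lg \t_\sY y,\col(p,q)\rg = \lg y,p\rg+\lg S^*y,q\rg$. Thus $\t_\sY^*\col(p,q)=x$ should mean: $x\in\sD(S^*{}^*)=\sD(S)$ — wait, more carefully, it means $\lg y,p\rg+\lg S^*y,q\rg=\lg y,x\rg_\sY=\lg y,x\rg+\lg S^*y,S^*x\rg$ for all $y\in\sD(S^*)$, using the graph inner product on $\sY$. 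This forces $q\in\sD(SS^*)$ (so that $S^*q$ makes sense as an element on which $S$ acts) and $p+Sq=x+SS^*x$, i.e. $(I+SS^*)x=p+Sq$ — here one uses that $y$ ranges over a core issue but $\sD(S^*)$ is dense so the pairing determines things. Applying this with $\col(p,q)=\Pi_2^*a=\col(0,a)$ gives exactly condition (c): $x\in\sD(SS^*)$ and $(I+SS^*)x=Sa$. Applying it with $\col(p,q)=\Pi_1^*Sa=\col(Sa,0)$ gives $x\in\sD(SS^*)$ and $(I+SS^*)x=Sa$ as well — the same condition. Hence (a) $\Leftrightarrow$ (c) $\Leftrightarrow$ (b).

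The main obstacle I expect is the careful handling of the adjoint $\t_\sY^*$ and of $SS^*$ as an unbounded operator: one must verify that the condition "$\lg y,p\rg+\lg S^*y,q\rg=\lg y,x\rg+\lg S^*y,S^*x\rg$ for all $y\in\sD(S^*)$" is genuinely equivalent to "$q\in\sD(SS^*)$ and $x\in\sD(SS^*)$ with $(I+SS^*)x=p+Sq$", rather than merely implied in one direction. This is where one invokes that $S$ is closed and densely defined (stated in the excerpt), so $S^{**}=S$ and $SS^*$ is selfadjoint with $I+SS^*$ a bijection from $\sD(SS^*)$ onto $\sX$; the invertibility of $I+SS^*$ is what pins down $x$ uniquely and makes the equivalence clean. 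Once that functional-analytic bookkeeping is in place, the three-way equivalence and the concluding identity follow with no further difficulty.
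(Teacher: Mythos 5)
Your argument is correct in substance and follows essentially the same route as the paper: identifying $\t_\sY^*$ through the graph inner product is the same computation the paper carries out via Kato's orthogonal decomposition of $\sX\oplus\sX$ into the graph of $S^*$ and the rotated graph of $S$ (the formula (5.9) from Kato cited there), and in both treatments statements (a) and (b) are reduced to the single condition (c), with $S^{**}=S$ (closedness of $S$) as the crux, exactly as you indicate. One sentence should be repaired: the adjoint relation does not ``force $q\in\sD(SS^*)$'' --- $q$ is given data, and in your application $q=a$ is only known to lie in $\sD(S)$; what the relation actually forces is $q-S^*x\in\sD(S)$ with $S(q-S^*x)=x-p$, which, because $q\in\sD(S)$ in both cases used ($q=a$ and $q=0$), yields $x\in\sD(SS^*)$ and $(I+SS^*)x=p+Sq$, i.e.\ precisely condition (c).
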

\begin{proof}
We split the proof into two parts.

\smallskip
\noindent\textsc{Part 1.} We  prove the equivalence of  items (a) and (c). To do this we use the fact  (see formula (5.9) in \cite[page 168]{Kato66}) that there exist (unique) vectors $x_1\in \sD(S^*)$ and $x_2\in \sD(S)$ such that
\begin{equation}\label{x12a}
\Pi_2^* a=\begin{bmatrix}0\\ a  \end{bmatrix}= \begin{bmatrix}x_1\\ S^*x_1  \end{bmatrix}+  \begin{bmatrix}-Sx_2\\ x_2  \end{bmatrix}.
\end{equation}
Note that $\t_\sY^*\Pi_2^* a=x_1$. The identity  \eqref{x12a} is equivalent to
\begin{equation}\label{x12b}
x_1=Sx_2 \ands a= S^*x_1+x_2.
\end{equation}
Since $a\in \sD(S)$ and  $x_2\in \sD(S)$, the second identity in \eqref{x12b} shows that $S^*x_1=a-x_2\in  \sD(S)$. Thus $x_1\in  \sD(SS^*) $ and using the first identity in \eqref{x12b} we obtain
\[
Sa=SS^*x_1 +Sx_2= (I+SS^*)x_1.
\]
We conclude that  $(I+SS^*)x=Sa$ with $x=x_1$.

Conversely, assume $x \in \sD(SS^*) $ satisfies  $ (I+SS^*)x=Sa$. Put $x_1=x$ and define $x_2=a-S^*x_1$. Then $x_2\in \sD (S)$ and
\[
Sx_2=Sa- SS^*x_1=Sa -\left(I+SS^*\right)x_1 +x_1=x_1.
\]
Thus the two identities in  \eqref{x12b} are satisfied which implies that \eqref{x12a} holds. Hence $\t_\sY^*\Pi_2^*a=x_1=x$. \epr

\smallskip
\noindent\textsc{Part 2.}  We prove    $\t_\sY^*\Pi_1^* Sa=\t_\sY^*\Pi_2^*  a$.
Again using formula (5.9) in \cite[page 168]{Kato66}, there exist (unique) vectors $x_1\in \sD(S^*)$ and $x_2\in \sD(S)$ such that
\begin{equation}\label{x12c}
\Pi_1^*  Sa=\begin{bmatrix} Sa\\0\end{bmatrix}=
\begin{bmatrix}x_1\\ S^*x_1  \end{bmatrix}+  \begin{bmatrix}-Sx_2\\ x_2  \end{bmatrix}.
\end{equation}
The latter identity is equivalent to
\begin{equation}
\label{x12d}
Sa=x_1- Sx_2 \ands x_2=-S^*x_1.
\end{equation}
Since $x_2\in \sD(S)$, the second identity in \eqref{x12d} shows that  $x_1\in \sD(SS^*)$ and $ Sx_2=-SS^*x_1$. Using this fact  the first identity in \eqref{x12d} yields
\[
Sa=x_1+SS^*x_1=(I+SS^*)x_1.
\]
But then we can apply the result of the previous part to show that $\t_\sY^*\Pi_2^*a=x_1$.   On the other hand, from \eqref{x12c} it follows that  $\t_\sY^*\Pi_1^*Sa$ is also equal to $x_1$. Hence we have $\t_\sY^*\Pi_1^*Sa= \t_\sY^*\Pi_2^*a$ as desired.  Together the two parts prove the lemma.
\end{proof}

\medskip
\noindent\textit{Proof of Proposition \ref{prop:BC1}}.  Recall
that $S$ is one-to-one and has a dense range. Therefore, since
$\si$ and $\wh{\si}$ are given by \eqref{defexsi} and
\eqref{defexwhsi}, respectively, it suffices to show that
\begin{equation}\label{pseudosim2}
B\sU\subset \sD(S), \quad  \wh{B}=SB\ands  \wh{C}Sa=Ca \quad \left(a\in \sD(S)\right).
\end{equation}
Take $u\in \sU\, (=\sD(R))$. Then
\begin{align*}
&Bu=\Pi_1 \t_\sU u= \Pi_1\begin{bmatrix}u\\ Ru \end{bmatrix}=u\in  \sD(R)\subset \sD(S) \ands\\
&SBu=Su=Ru=\Pi_2\begin{bmatrix}u\\ Ru \end{bmatrix}=\Pi_2\t_\sU u=\wh{B}u.
\end{align*}
This proves the first part of \eqref{pseudosim2}. To prove the
second part,  let $a\in \sD(S)$. Using Lemma~\ref{lem:tauY*1} we
have
\[
\wh{C}Sa=(\Pi_1\t_\sY)^*Sa=\t_\sY^*\Pi_1^*Sa=\t_\sY^*\Pi_2^*a=(\Pi_2\t_\sY)^*a=Ca.
\]
Hence $S$ is a pseudo-similarity from $\si$ to $\wh{\si}$. In particular, the two systems have the same transfer function, i.e., $CB=\wh{C}\wh{B}$.\epr

\appendix
\section{}\label{App}
\renewcommand{\theequation}{A.\arabic{equation}}
\setcounter{equation}{0}
In this appendix we review a number of results regarding $2\ts 2$ nonnegative operator matrices that are used in the present paper.  In particular, we shall consider  Schur complements for such operators. Throughout  we assume that $\a: \sX\to \sX$, $\b: \sU\to \sX$, $\d: \sU\to \sU$ are  bounded  Hilbert space operators and $T$ is the  bounded  operator defined by
\begin{equation}\label{A:matpos}
T=\begin{bmatrix}
 \a     &  \b  \\
\b^*      &\d
\end{bmatrix}: \begin{bmatrix} \sX\\ \sU \end{bmatrix} \to
\begin{bmatrix} \sX\\ \sU \end{bmatrix}.
\end{equation}

\begin{prop}\label{prop:A1} The operator $T$ is nonnegative if and only if  $\a$ and $\d$ are nonnegative and there exists a contraction $\ga: \sX \to \sU$  such that
\begin{itemize}
\item[\textup{(a)}] $\kr \ga \supset \kr \a$ and $\im \ga \subset \overline{\im \d}$,
\item[\textup{(b)}] $\b^*=\d^{1/2}\ga \a^{1/2}$.
\end{itemize}
Moreover, in that case  $\ga $ is uniquely determined by conditions \textup{(a)} and  \textup{(b)}.
\end{prop}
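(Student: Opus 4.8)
The plan is to prove the two implications and the uniqueness claim separately; the whole argument is a Cauchy--Schwarz estimate followed by a factorization.

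First I would treat the ``only if'' direction. If $T\ge 0$, compressing to the first and second coordinate immediately gives $\a\ge 0$ and $\d\ge 0$. Next, for fixed $x\in\sX$, $u\in\sU$ I would test nonnegativity of $T$ on the vector with first entry $tx$ ($t\in\BR$) and second entry a unimodular multiple of $u$ chosen so that $\lg\b^*x,u\rg$ becomes real and $\ge 0$; the resulting quadratic polynomial $t\mapsto t^2\lg\a x,x\rg+2t|\lg\b^*x,u\rg|+\lg\d u,u\rg$ is everywhere nonnegative, so its discriminant is $\le 0$, which yields
\[
|\lg\b^*x,u\rg|^2\le\lg\a x,x\rg\,\lg\d u,u\rg=\|\a^{1/2}x\|^2\,\|\d^{1/2}u\|^2,\qquad x\in\sX,\ u\in\sU.
\]
This inequality says exactly that the sesquilinear form $(x,u)\mapsto\lg\b^*x,u\rg$ is contractive with respect to the seminorms $\|\a^{1/2}\cdot\|$ and $\|\d^{1/2}\cdot\|$, so a standard Riesz-representation argument (equivalently, Douglas's range-inclusion lemma, applied once in each variable) produces a contraction $\g_0:\ol{\im\a^{1/2}}\to\ol{\im\d^{1/2}}$ with $\d^{1/2}\g_0\a^{1/2}=\b^*$. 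Extending $\g_0$ by zero on $\ker\a=(\ol{\im\a^{1/2}})^\perp$ gives a contraction $\g:\sX\to\sU$; since $\ker\d^{1/2}=\ker\d$, hence $\ol{\im\d^{1/2}}=\ol{\im\d}$, and likewise $\ol{\im\a^{1/2}}=\ol{\im\a}$, this $\g$ satisfies (a), while (b) holds by the way $\g_0$ was built.

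For the ``if'' direction I would simply verify the block factorization
\[
T=\begin{bmatrix}\a^{1/2}&0\\0&\d^{1/2}\end{bmatrix}\begin{bmatrix}I_\sX&\g^*\\\g&I_\sU\end{bmatrix}\begin{bmatrix}\a^{1/2}&0\\0&\d^{1/2}\end{bmatrix},
\]
whose diagonal entries are $\a$ and $\d$, whose $(2,1)$ entry is $\d^{1/2}\g\a^{1/2}=\b^*$ by (b), and whose $(1,2)$ entry is the adjoint of the latter. As $\g$ is a contraction the middle factor is nonnegative, and $S^*NS\ge 0$ whenever $N\ge 0$; since the outer factor is selfadjoint, $T\ge 0$ follows. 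For uniqueness, if $\g_1,\g_2$ both satisfy (a) and (b) then $\d^{1/2}(\g_1-\g_2)\a^{1/2}=0$, so $(\g_1-\g_2)\a^{1/2}x\in\ker\d^{1/2}=\ker\d$ for every $x$; but by (a) that vector also lies in $\im\g_i\subset\ol{\im\d}=(\ker\d)^\perp$, so it vanishes. Hence $\g_1=\g_2$ on $\ol{\im\a^{1/2}}=\ol{\im\a}$, and both are zero on $\ker\a=(\ol{\im\a})^\perp$ by (a), so $\g_1=\g_2$.

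The only delicate point is the construction of $\g$ in the ``only if'' direction: one must run the Riesz/Douglas step through the \emph{closures} of the ranges of $\a^{1/2}$ and $\d^{1/2}$ and keep careful track of the identities $\ker\d^{1/2}=\ker\d$ and $\ol{\im\d^{1/2}}=\ol{\im\d}$, so that the operator produced genuinely has range inside $\ol{\im\d}$ and kernel containing $\ker\a$; all remaining steps are routine bookkeeping.
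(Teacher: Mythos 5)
Your proof is correct. The paper does not prove Proposition A.1 itself but refers to \cite{FF90}, \cite{BW11}, and \cite{DriRov10}, and your argument (the Cauchy--Schwarz/discriminant bound $|\lg\b^*x,u\rg|\le\|\a^{1/2}x\|\,\|\d^{1/2}u\|$, the induced contraction between the range closures, the three-factor block factorization for the converse, and the kernel/range bookkeeping for uniqueness) is essentially the standard proof given in those references.
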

If  $T$ is nonnegative and $\ga$ is the contraction satisfying  the  two conditions in  the above proposition, then we call $\ga$ the \emph{minimal contraction} determined  by~$T$.
For the proof of the  proposition see  the proof  of \cite[Theorem XVI.1.1]{FF90}, of  \cite[Lemma 2.4.4]{BW11} or   of    \cite[Lemma A.1]{DriRov10}.

Assume $T$ is nonnegative, and let $\ga$ be the minimal contraction determined  by $T$. Then the operator $\de$ on $\sX$ given by
\begin{equation} \label{defSchurcpl}
\de=\a^{1/2}(I-\ga ^*\ga)\a^{1/2}
\end{equation}
is called  the \emph{Schur complement} of $T$ supported by $\sX$. If $\d$ is invertible, then $\de=\a-\b\d^{-1}\b^*$, which is the classical Schur complement formula (see, e.g., \cite[Lemma A.1.2]{FFGK98}). From formula \eqref{defSchurcpl} it follows that the  Schur complement $\de=0$ if and only if $\ga$ is a partial isometry with initial space equal to $\overline{\im \a}$.

\begin{prop}\label{prop:A2} Let $T$ be  nonnegative. The Schur complement  of $T$ supported by $\sX$ is also given by
 \begin{equation}
\label{defaltDe}
\lg \de x, x\rg=\inf  \left\{ \lg T \begin{bmatrix} x\\u \end{bmatrix}, \begin{bmatrix} x\\u \end{bmatrix}\rg    \mid  {u\in \sU}\right\}, \quad  x\in \sX.
\end{equation}
\end{prop}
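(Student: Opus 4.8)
The plan is to invoke Proposition~\ref{prop:A1}, which applies because $T$ is nonnegative: it provides nonnegative operators $\a$ and $\d$ together with the minimal contraction $\ga\colon\sX\to\sU$ characterised by $\kr\ga\supset\kr\a$, $\im\ga\subset\overline{\im\d}$ and $\b^*=\d^{1/2}\ga\a^{1/2}$. Recall from \eqref{defSchurcpl} that the Schur complement is $\de=\a^{1/2}(I-\ga^*\ga)\a^{1/2}$. The whole argument is then a completion of squares in the quadratic form $\lg T(x,u),(x,u)\rg$, from which both inequalities implicit in \eqref{defaltDe} can be read off.

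The first step is the algebraic identity: for every $x\in\sX$ and $u\in\sU$,
\[
\lg T\begin{bmatrix}x\\u\end{bmatrix},\begin{bmatrix}x\\u\end{bmatrix}\rg
=\big\|\d^{1/2}u+\ga\a^{1/2}x\big\|^2+\lg\de x,x\rg .
\]
I would obtain this by expanding the left-hand side as $\|\a^{1/2}x\|^2+2\,\mathrm{Re}\,\lg\b u,x\rg+\|\d^{1/2}u\|^2$, then rewriting the cross term by means of $\b=\a^{1/2}\ga^*\d^{1/2}$ as $\lg\b u,x\rg=\lg\d^{1/2}u,\ga\a^{1/2}x\rg$, and finally using $\lg\de x,x\rg=\|\a^{1/2}x\|^2-\|\ga\a^{1/2}x\|^2$. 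Since the first summand on the right is nonnegative, this already gives $\lg T(x,u),(x,u)\rg\ge\lg\de x,x\rg$ for all $u\in\sU$, hence the infimum in \eqref{defaltDe} is bounded below by $\lg\de x,x\rg$.

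The second step is the matching upper bound, and this is where the inclusion $\im\ga\subset\overline{\im\d}$ from Proposition~\ref{prop:A1} is used: since $\d\ge 0$ we have $\overline{\im\d}=\overline{\im\d^{1/2}}$, so $\ga\a^{1/2}x\in\overline{\d^{1/2}\sU}$ and there is a sequence $u_n\in\sU$ with $\d^{1/2}u_n\to-\ga\a^{1/2}x$. Feeding $u_n$ into the identity of the first step gives $\lg T(x,u_n),(x,u_n)\rg\to\lg\de x,x\rg$, so the infimum is also $\le\lg\de x,x\rg$; combining the two bounds yields \eqref{defaltDe}. I do not expect a genuine obstacle here: the computation is routine once $\ga$ is in hand, and the only delicate point is the approximation in the second step, which is precisely what the range condition $\im\ga\subset\overline{\im\d}$ built into the minimal contraction guarantees. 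The main thing to keep straight is the bookkeeping, namely that it is the vector $\ga\a^{1/2}x$ that must be approximated from $\d^{1/2}\sU$, and that $\overline{\im\d}$ and $\overline{\im\d^{1/2}}$ coincide.
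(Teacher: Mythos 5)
Your proposal is correct and takes essentially the same route as the paper: both rest on the identity $\lg T\begin{bmatrix}x\\u\end{bmatrix},\begin{bmatrix}x\\u\end{bmatrix}\rg=\lg\de x,x\rg+\|\ga\a^{1/2}x+\d^{1/2}u\|^2$ (the paper derives it from a triangular factorization of $T$, you by direct expansion and completion of the square, which is the same computation) followed by the approximation of $\ga\a^{1/2}x$ from $\d^{1/2}\sU$ using $\im\ga\subset\overline{\im\d}=\overline{\im\d^{1/2}}$. No gaps.
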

\begin{proof}
By direct checking one proves that
\[
T=\begin{bmatrix}  I_\sX& \a^{1/2}\ga^*\\ 0 & \d^{1/2}\end{bmatrix}
\begin{bmatrix}\de&0\\ 0 & I_\sU\end{bmatrix}
\begin{bmatrix}  I_\sX& 0\\ \ga\a^{1/2} & \d^{1/2}\end{bmatrix}.
\]
Using this identity  we see that
\begin{align*}
 \lg T \begin{bmatrix} x\\u \end{bmatrix}, \begin{bmatrix} x\\u \end{bmatrix}\rg  &= \lg \begin{bmatrix}\de&0\\ 0 & I_\sU\end{bmatrix}\begin{bmatrix}  I_\sX& 0\\ \ga\a^{1/2} & \d^{1/2}\end{bmatrix} \begin{bmatrix} x\\u \end{bmatrix},
\begin{bmatrix}  I_\sX& 0\\ \ga\a^{1/2} & \d^{1/2}\end{bmatrix} \begin{bmatrix} x\\u \end{bmatrix}\rg\\
 &=  \lg \begin{bmatrix}\de&0\\ 0 & I_\sU\end{bmatrix}
 \begin{bmatrix} x\\ \ga\a^{1/2}x+\d^{1/2}u \end{bmatrix},
\begin{bmatrix} x\\ \ga\a^{1/2}x+\d^{1/2}u \end{bmatrix} \rg\\
 &=\lg \de x, x\rg +\|\ga\a^{1/2}x+\d^{1/2}u\|^2.
\end{align*}
Thus for $x\in \sX$ and $u\in \sU$ we have
\begin{equation}\label{ineqxu}
\lg \de x, x\rg\leq  \lg T \begin{bmatrix} x\\u \end{bmatrix}, \begin{bmatrix} x\\u \end{bmatrix}\rg \leq \lg \de x, x\rg +\|\ga\a^{1/2}x+\d^{1/2}u\|^2.
\end{equation}
Now fix $x\in \sX$. Recall that  $\im \ga\subset \overline{\im \d}=\overline{\im \d^{1/2}}$. Thus $\ga\a^{1/2}x\in \overline{\im \d^{1/2}}$. It follows that there exist a sequence $u_1, u_2, \dots$ in $\sU$ such that
\[
\lim_{n\to \iy} \|\ga\a^{1/2}x+\d^{1/2}u_n\| =0.
\]
But then \eqref {ineqxu} shows that \eqref{defaltDe} holds.
\end{proof}

The notion of a Schur complement is closely related that  of  a
\emph{shorted operator} as defined by M.~G.~Kre\v{\i}n  in
\cite{MKrein47}. In fact, if $T$ is nonnegative, then  $\de$ is
the Schur complement  of $T$ supported by $\sX$ if and only if
\[
\begin{bmatrix} \de&0\\ 0&0 \end{bmatrix}
\]
is the shorted operator corresponding to  $T$ and $\sX$. This follows from formula \eqref{defaltDe}; see Section 2 in \cite{Arl08} for further details.

{\it{Acknowledgments}}.  The authors thank the referee for his/her
careful reading of the paper. The remarks  of the referee are
incorporated in the paragraph  directly after
Lemma~\ref{lem:minopt} and  in Remarks \ref{rem:new1} and
\ref{rem:new2}.

\end{document}